\newtheorem{theorem}{Theorem}[section]
\newtheorem{lemma}{Lemma}[section]
\newtheorem{proposition}{Proposition}[section]
\theoremstyle{definition}
\newtheorem{definition}[theorem]{Definition}
\newtheorem{remark}{Remark}
\numberwithin{equation}{section}
\newcommand{\e}{\varepsilon}
\newcommand{\R}{\mathbb{R}}
\newcommand{\T}{\mathbb{T}}
\newcommand{\C}{\mathbb{C}}
\newcommand{\Z}{\mathbb{Z}}
\newcommand{\N}{\mathbb{N}}
\newcommand{\bral}{[\![}
\newcommand{\brar}{]\!]}
\newcommand{\cK}{\mathcal{K}}
\newcommand{\td}{\mathtt{d}}
\newcommand{\te}{\mathtt{e}}
\newcommand{\ii}{\mathrm{i}}
\renewcommand{\le}{\leqslant}
\renewcommand{\geq}{\geqslant}
\subjclass[2020]{Primary: 76Y05; 37K45. Secondary: 35Q40; 35Q35; 35B34; 35B35.}
\keywords{Quantum hydrodynamics, compressible Euler equations, transfers of energy, growth of Sobolev norms, weak turbulence.}
\thanks{Filippo Giuliani has received funding from PRIN-20227HX33Z \textit{Pattern formation in nonlinear phenomena} - Funded by the European Union-Next Generation  EU, Miss. 4-Comp. 1-CUP D53D23005690006. The authors acknowledge partial support from INdAM-GNAMPA}
\begin{document}
	
\title[Growth of Sobolev norms for the quantum Euler equations] 
{Energy cascade and Sobolev norms inflation for the quantum Euler equations on tori}

\author[F.~Giuliani]{Filippo Giuliani}
\address[F.~Giuliani]{Dipartimento di Matematica, Politecnico di Milano \\ Piazza Leonardo Da Vinci 32 \\ 20133, Milano, Italy}
\email{filippo.giuliani@polimi.it}

\author[R.~Scandone]{Raffaele Scandone}
\address[R.~Scandone]{Dipartimento di Matematica e Applicazioni ``R. Caccioppoli'', Universit\`a degli Studi di Napoli ``Federico II" \\ Complesso Universitario Monte S.~Angelo, via Cintia \\ 80126, Napoli, Italy}
\email{raffaele.scandone@unina.it}

	\begin{abstract}
In this paper we prove the existence of solutions to the quantum Euler equations on $\T^d$, $d\geqslant 2$, with almost constant mass density, displaying energy transfers to high Fourier modes and polynomially fast-in-time growth of Sobolev norms above the finite-energy level. These solutions are uniformly far from vacuum, suggesting that weak turbulence in quantum hydrodynamics is not necessarily related to the occurrence of vortex structures. 

{In view of possible connections with instability mechanisms for the classical compressible Euler equations, we also keep track of the dependence on the semiclassical parameter, showing that, at high regularity, the time at which the Sobolev norm inflations occur is uniform when approaching the semiclassical limit.}

Our construction relies on a novel result of Sobolev instability for the plane waves of the cubic nonlinear Schr\"odinger equation (NLS), 
{which is connected to the quantum Euler equations through the Madelung transform}. More precisely, we show the existence of smooth solutions to NLS, which are small-amplitude perturbations of a plane wave and undergo a polynomially fast $H^s$-norm inflation for $s>1$. The proof is based on a partial Birkhoff normal form procedure, involving the normalization of non-homogeneous Hamiltonian terms. 
\end{abstract}

\maketitle

	\tableofcontents
	
	\section{Introduction and main results}
	In this paper we consider a quantum fluid model in dimension $d\geqslant 2$, with periodic boundary conditions, and we investigate the existence of smooth solutions undergoing arbitrarily large growth of high order Sobolev norms as time evolves. This issue is deeply connected with the \emph{weak turbulence} theory for quantum systems, as discussed later in the introduction. The model under analysis is given by the following compressible quantum Euler equations
	\begin{equation}\label{eq:QHD}
		\begin{cases}
			\partial_t \rho+\operatorname{div}(\rho v)=0, \\
			\partial_t (\rho v)+\operatorname{div}(\rho v \otimes v)+ \nabla p(\rho)=\dfrac{\e^2}{2}\rho\,\nabla \left( \dfrac{\Delta \sqrt{\rho}}{\sqrt{\rho}}\right), 
		\end{cases}\qquad t\in\R,\;x\in \T^d:=(\R/ 2\pi\Z)^d,
	\end{equation}
	where the unknowns $(\rho,v):\R_t\times \T^d\to \R^+\times \R^d$ are interpreted respectively as mass density and velocity field, and $p(\rho):=\frac12\rho^2$ is a barotropic pressure term. We assume that the velocity field is irrotational, namely $\operatorname{rot} v=0$. Here $\e>0$ is the semiclassical parameter.
	
System \eqref{eq:QHD}, also known as \emph{quantum hydrodynamic} (QHD) system, arises in the description of hydrodynamic models where the quantum statistics becomes relevant at mesoscopic/macroscopic scales, which typically happens when the thermal de Broglie wavelength is comparable with the inter-particle distance. It is widely used in several physical contexts, including superfluidity \cite{Khal}, quasi-particles condensates \cite{carci}, quantum plasmas \cite{Haas}, and nanoscale semiconductor devices \cite{Jungel}. 

The third order nonlinear term appearing in the right hand side of the momentum equation encodes the quantum correction to the classical compressible Euler equations. Accordingly, the wave-type dispersion relation $\Omega(k)\approx|k|$ of the Euler equations is modified into a Schr\"odinger-type dispersion relation $\Omega(k)\approx \e|k|^2+|k|,$ as can be readily verified by linearizing \eqref{eq:QHD} around a constant equilibrium state. The QHD system actually falls into a more general class of dispersive hydro\-dynamic models, provided by the Euler-Korteweg system, which we briefly discuss in Section \ref{sec:op}.
	
The total mass $\mathcal{M}:=\|\rho\|_{L^1}$, and the total energy
	\begin{equation}\label{def:energy}
		\mathcal{E}:=\frac12\int_{\T^d} \e|\nabla\sqrt{\rho}|^2+\e\rho \,|v|^2+\e^{-1}\rho^2\,dx
	\end{equation}
	are formally conserved along the flow of \eqref{eq:QHD}. Moreover, since the quantum correction can be written in divergence form as
	\begin{equation}\label{eq:qc}
	{\textstyle{\dfrac{\e^2}{2}}}\rho\,\nabla \left( \dfrac{\Delta \sqrt{\rho}}{\sqrt{\rho}}\right)=\e^2\,\nabla\cdot\big({\textstyle{\frac14}}\nabla^2\rho-\nabla\sqrt{\rho}\otimes \nabla\sqrt{\rho}\big),
\end{equation}
	also the total momentum $\int_{\mathbb{T}^d} \rho \,v \,dx$ is formally conserved.
	
The formulation of system \eqref{eq:QHD} requires the mass density to be everywhere non-vanishing. In presence of vacuum regions, the situation is indeed more involved: to begin with, the velocity field $v$ is not well-defined on $\{\rho=0\}$, and one needs to carefully define the correct hydrodynamic variables; in addition, the irrotationality condition $\operatorname{rot} v=0$ outside vacuum is still compatible with the presence of quantum vortices, namely solutions whose velocity field has non-zero circulations. For a comprehensive discussion on the mathematical framework of quantum fluid models in presence of vacuum, we refer to \cite{AM-review} and references therein. 

In this work we consider only the non-vacuum regime, 
aiming to show that the mechanism of growth of Sobolev norms, and related weakly turbulent behaviors in quantum hydrodynamics, are not necessarily caused by the presence of vortices. To do that, we exploit the connection between the QHD system \eqref{eq:QHD} and the cubic NLS equation, for which solutions exhibiting energy cascade have been proved to exist (we mention the seminal paper \cite{CKSTT}, and we refer to Section \ref{ss_11} for a more detailed overview).

We will prove the existence of solutions to the QHD system, whose mass density varies slightly around a constant density and which undergo a polynomially fast norm inflation. At sufficiently high regularity, the upper bound on the time at which the arbitrarily large $H^s$ norm inflations occur is uniform when $\e\to 0$. We also show the occurrence of energy cascades to high frequencies over exponential times. 
Our results are a consequence of a novel theorem on Sobolev instability for plane wave solutions to the cubic NLS -- see Theorem \ref{thm:growth}.

 The connection between quantum hydrodynamic systems and NLS equations has been deeply investigated in the literature (see the discussion in Section \ref{ss_11}), mainly in the context of well-posedness/stability results. In the breakthrough paper \cite{Merle} by Merle-Rapha\"el-Rodnianski-Szeftel (see also the recent work \cite{Cao} by Cao Labora-G\'omez Serrano-Shi-Staffilani), blow-up profiles for a class of defocusing, \emph{supercritical} NLS equations is obtained by showing that the presence of dispersion does not alter the compression mechanism for a family of smooth, self-similar solutions to the classical compressible Euler equations. In the present paper, in the context of wave turbulence, somehow we go in the opposite direction, transporting a dynamical information (energy transfers) from a defocusing, \emph{subcritical} NLS to the corresponding hydrodynamic system.

\smallskip


Concerning the non-vacuum regime of the QHD system, local well-posedness for system \eqref{eq:QHD} (and for a more general class of Euler-Korteweg systems) on $\T^d$, at sufficiently high Sobolev regularity ($H^s(\T^d)$ with $s>2+\frac{d}{2}$), has been proved by Berti-Maspero-Murgante \cite{BMM}. In the case of irrational tori, long time existence is discussed in  Feola-Iandoli-Murgante \cite{FIM} and Bambusi-Feola-Montalto \cite{BFM}. In the Euclidean setting, the solution theory for the QHD system (and related quantum fluids models) has been widely investigated \cite{AHMHyp,HIME,AM17S,AM16D,AS17,LIMAR,AHSP,Jungel,HLLA,DFM,ADAM,NISU,AP,AMSMS}, see also \cite{AMHZ-Rims} and references therein. In particular, existence of global-in-time, finite energy weak solutions has been proved by Antonelli-Marcati \cite{AMQ3,AMQ2}, also allowing for vacuum regions.
	
Comparing with the case of classical compressible Euler equations, where various mechanisms of breakdown of regularity are known (see e.g.~the seminal paper \cite{Sideris} by Sideris, and the monograph \cite{Dafe} by Dafermos), the solution theory for the QHD system is more satisfactory, as a consequence of the enhanced dispersion induced by the quantum correction \eqref{eq:qc}. Nevertheless, the interaction between nonlinear dispersive waves may still lead to long-time ``instability'' effects, which typically emerge with a transfer to high frequencies of certain energy densities. The study of such phenomena falls into the realm of the aforementioned \emph{weak turbulence} theory, whose general aim is to provide a statistical description of interacting nonlinear waves outside equilibrium, see e.g.~the monograph \cite{Nazarenko} by Nazarenko. From a mathematical point of view, a possible \emph{deterministic} way to encode weak turbulent mechanisms for the QHD system \eqref{eq:QHD} is to show the existence of solutions which exhibit arbitrarily large growth over time of the Sobolev norm
\begin{equation}\label{eq:sob_norm}
\|(\rho,v)\|_{M^s}:=\|\rho\|_{H^s}+\|v\|_{H^{s-1}},
\end{equation}
or in alternative of the quantity
\begin{equation}\label{eq:lambda_norm}
	\|(\sqrt{\rho},\Lambda)\|_{M^s}:=\|\sqrt{\rho}\|_{H^s}+\|\sqrt{\rho}v\|_{H^{s-1}},
\end{equation}
at regularity $s>1$, namely above the finite-energy level $s=1$. The choice of the norm \eqref{eq:sob_norm} is natural from an analytical perspective; on the other side, in view of the expression \eqref{def:energy} for the (conserved) total energy, a growth in-time of \eqref{eq:lambda_norm} for $s>1$ directly implies the so-called \emph{direct energy cascade}, namely a transfer to high frequencies of the energy density. In this paper we provide some results in this direction, also highlighting the dependence on the semiclassical parameter $\e$. Apart from its own interest, we believe this is a starting point in order to connect  instability mechanisms of the compressible Euler equations with the weak turbulence theory for quantum hydrodynamics, in the semiclassical limit as $\e\to 0$.

As already discussed, we only seek for solutions without vacuum regions. To this aim, let us fix a mass level $\mathcal{M}=(2\pi)^dm$, and consider pairs $(\rho,v)$ such that $\rho$ is a small-amplitude perturbation of the constant density $\rho_{\mathrm{const}}(x)=m$. Our first main result shows that, within this class of hydrodynamic states, we can actually find smooth solutions to the QHD system \eqref{eq:QHD} displaying arbitrarily large growth of $\|(\rho,v)\|_{M^s}$  as time evolves, for any $s>1$. More precisely, we have the following.

		\begin{theorem}\label{th:main_qhd}
		Let $d\geqslant 2$, $m,\sigma >0$, $\e\in(0,1]$, $s>1$. There exists $C>0$, independent on $\e$, such that the following holds. For every $\mathcal{K}>0$ large enough, there exists a  smooth solution $(\rho,v)$ to system \eqref{eq:QHD}, defined on $[0,T]\times\T^d$ for some 
		\begin{equation}\label{eq:T_pol}
			0<T\leqslant C\e\Big(\frac{\mathcal{K}}{\e}\Big)^{\frac{3(1+\sigma)}{s-1}},
			\end{equation}
		satisfying the estimates
		\begin{gather}\label{eq:growth_K}
		 \frac{\|(\rho,v)(T) \|_{M^s}}{\| (\rho,v)(0) \|_{M^s}}\geqslant \mathcal{K},\\
\label{close_weak_norm}
		\sup_{t\in [0, T]} \|\rho(t)-m\|_{ L^\infty}\leqslant C\,\Big(\frac{\mathcal{K}}{\e}\Big)^{-\frac{\sigma}{s-1}}.
	\end{gather}
	
	\end{theorem}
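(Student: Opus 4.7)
The plan is to lift the Sobolev instability for plane waves of the cubic NLS, provided by Theorem \ref{thm:growth}, to the quantum Euler equations via the Madelung transform. For any smooth, nowhere-vanishing solution $\psi\colon[0,T]\times\T^d\to\C$ of the semiclassical defocusing cubic NLS
\[
i\e\,\partial_t\psi=-\tfrac{\e^2}{2}\Delta\psi+|\psi|^2\psi,
\]
the hydrodynamic variables $\rho:=|\psi|^2$ and $v:=\e\,\mathrm{Im}(\nabla\psi/\psi)$ solve \eqref{eq:QHD} with pressure $p(\rho)=\rho^2/2$. The constant state $(m,0)$ is represented by the plane wave $\psi_*(t,x)=\sqrt{m}\,e^{-imt/\e}$, so the theorem reduces to producing a nearby NLS solution that undergoes the desired $H^s$ inflation.

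I would then invoke Theorem \ref{thm:growth} (in the form dictated by the semiclassical scaling above) to obtain, for any sufficiently large $\mathcal{K}$, a smooth solution $\psi=\psi_*+\phi$ of the NLS on $[0,T]$ with $T$ obeying the polynomial bound \eqref{eq:T_pol}, such that
\[
\|\phi(T)\|_{H^s}\geqslant\mathcal{K}\,\|\phi(0)\|_{H^s},\qquad \sup_{t\in[0,T]}\|\phi(t)\|_{H^r}\lesssim(\mathcal{K}/\e)^{-\sigma/(s-1)},
\]
for a fixed $r\in(d/2,s)$. The Sobolev embedding $H^r\hookrightarrow L^\infty$ then forces the perturbation to stay much smaller than $\sqrt{m}$ in sup norm, whence $|\psi|\geqslant\sqrt{m}/2$ uniformly on $[0,T]$; the Madelung transform is thus well defined throughout and produces a smooth, uniformly non-vacuum solution $(\rho,v)$ of \eqref{eq:QHD}.

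The next step is to translate the NLS estimates into the hydrodynamic ones. Expanding around $\psi_*$ one gets $\rho-m=2\,\mathrm{Re}(\overline{\psi_*}\phi)+|\phi|^2$ and $v=(\e/m)\,\mathrm{Im}(\overline{\psi_*}\nabla\phi)+Q(\phi,\nabla\phi)$ with $Q$ quadratic in $(\phi,\nabla\phi)$; combining standard tame product and composition estimates in $H^s(\T^d)$ with the $L^\infty$-smallness of $\phi$, this yields the two-sided equivalences $\|\rho-m\|_{H^s}\sim_{m}\|\phi\|_{H^s}$ and $\|v\|_{H^{s-1}}\sim_{m}\e\,\|\phi\|_{H^s}$, up to errors that are quadratic in $\phi$ and hence negligible. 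Dividing the lower bound at $t=T$ by the upper bound at $t=0$ and absorbing multiplicative constants into a redefinition of $\mathcal{K}$, the NLS inflation factor transfers directly to $\|(\rho(T),v(T))\|_{M^s}/\|(\rho(0),v(0))\|_{M^s}\geqslant\mathcal{K}$, which is \eqref{eq:growth_K}. The $L^\infty$-closeness \eqref{close_weak_norm} then follows immediately, since $\|\rho-m\|_{L^\infty}\lesssim\sqrt{m}\,\|\phi\|_{L^\infty}+\|\phi\|_{L^\infty}^2\lesssim\|\phi\|_{H^r}$.

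The principal obstacle is the first step, namely producing the NLS perturbation with a time bound uniform as $\e\to 0$ in the precise form \eqref{eq:T_pol}: this is exactly what Theorem \ref{thm:growth} provides, and its proof rests on the partial Birkhoff normal form for non-homogeneous Hamiltonian terms announced in the abstract. Once that theorem is in hand, the reduction to the quantum Euler equations is a careful but essentially routine tame analysis near a constant state, the only subtleties being to preserve the non-vacuum condition uniformly in $\e$ on the whole interval $[0,T]$ and to verify that the factor $\e$ appearing explicitly in the velocity field does not degrade the resulting growth ratio.
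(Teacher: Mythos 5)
Your plan—apply Theorem \ref{thm:growth} on $\T^2$, pass to $(\rho,v)$ by the Madelung transform, ensure non-vacuum via $L^\infty$-smallness, and transfer the $H^s$ growth to the $M^s$ norm by an equivalence argument—is exactly the strategy the paper uses. Two points are inaccurate, however, and a third detail is omitted.

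First, you quote Theorem \ref{thm:growth} as providing $\sup_t\|\phi(t)\|_{H^r}\lesssim (\mathcal K/\e)^{-\sigma/(s-1)}$ for some $r>d/2$. The theorem actually gives smallness only in the Wiener algebra $\ell^1(\T^d)$, i.e.\ $\inf_{\varphi}\|u(t)-\sqrt m\,e^{\mathrm i\varphi}\|_{\ell^1}$, which is strictly weaker than $H^r$ for $r>d/2$. This is not fatal to your argument because you only use the derived $L^\infty$-bound, which does follow from the $\ell^1$-bound via \eqref{eq:emb}; but as stated your proposal asserts more than the input theorem delivers.

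Second, the claimed individual two-sided equivalences $\|\rho-m\|_{H^s}\sim_m\|\phi\|_{H^s}$ and $\|v\|_{H^{s-1}}\sim_m\e\|\phi\|_{H^s}$ are false as stated: if $\phi$ is real-valued (with $\psi_*$ real at that time), then $v\equiv 0$ even while $\|\phi\|_{H^s}$ is arbitrarily large. Conversely, if $\phi$ is purely imaginary, then $\rho-m=|\phi|^2$ and $\|\rho-m\|_{H^s}\lesssim \|\phi\|_{L^\infty}\|\phi\|_{H^s}\ll\|\phi\|_{H^s}$. What is true (and what Proposition \ref{pr:equiv1} proves, via a careful use of the tame Leibniz rule \eqref{eq:bil_sym}, the asymmetric rule \eqref{eq:asbi}, and the composition Lemma \ref{le:compo}) is the two-sided equivalence for the \emph{pair}, $M^{-1}\|(\rho,v)\|_{M^s}\le\|u\|_{H^s}\le M\e^{-1}\|(\rho,v)\|_{M^s}$ with $M=M(s,m)$. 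The crucial—and not "routine"—feature is that $M$ depends only on $s,m$ and the $L^\infty$-closeness of $u$ to $\sqrt m\,e^{\mathrm i\varphi}$, \emph{not} on $\|u\|_{H^s}$, which is blowing up; this is precisely why a naive composition estimate (whose constant depends on $\|u\|_{H^s}$) would be useless here, and why the paper devotes a proposition to it. Finally, Theorem \ref{thm:growth} is stated on $\T^2$; for $d\ge 3$ the paper lifts the two-dimensional solution via \eqref{eq:embed}, a step your proposal should also record.
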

Estimates \eqref{eq:T_pol} and \eqref{eq:growth_K} show that the growth of $\|(\rho,v)\|_{M^s}$ can happen \emph{polynomially fast} in the growth-ratio $\mathcal{K}$. Moreover, the bound \eqref{close_weak_norm} guarantees (for $\mathcal{K}$ sufficiently large) that the mass density $\rho$ is uniformly bounded away from zero, {and its variation from a fixed constant density can be actually made arbitrarily small}. In particular, vacuum regions are absent, and no quantum vortices emerge up to time $T$. 
As anticipated, the Sobolev norms inflation is purely related to the weak interaction between nonlinear dispersive waves. Concerning the dependence on the semiclassical parameter $\e$, observe that by choosing $\sigma>0$ sufficiently small we can obtain, at regularity $s>4$, an upper bound on the time of growth $T$ which is uniform (and actually infinitesimal) as $\e\to 0$.



\smallskip

Next we state our second main result, showing the occurrence of \emph{direct energy cascades} for the QHD system \eqref{eq:QHD}, namely the existence of solutions with arbitrarily large growth in time of the $M^s$-norm of the hydrodynamic state $(\sqrt{\rho},\Lambda)$.

	\begin{theorem}\label{th:main_cascade}
Let $d\geqslant 2$, $m,\sigma>0$, $\e\in[0,1)$, $s>1$. There exist $C>0$, independent on $\e$, and a universal constant $c>0$ such that the following holds. For every $\mathcal{K}>0$ large enough, there exists a  smooth solution $(\rho,v)$ to system \eqref{eq:QHD}, defined on $[0,T]\times\T^d$ for some 
\begin{equation}\label{eq:T_exp}
	0<T\leqslant C\e^{-9}\,e^{(\e^{-1}\mathcal{K})^{\frac{c}{s-1}}},
\end{equation}
satisfying the estimates
\begin{gather}\label{eq:growth_K_exp}
	\frac{\|(\sqrt{\rho},\Lambda)(T) \|_{M^s}}{\| (\sqrt{\rho},\Lambda)(0) \|_{M^s}}\geqslant \mathcal{K},\\
	\label{close_weak_norm_exp}
	\sup_{t\in [0, T]} \|\rho(t)-m\|_{L^\infty}\leqslant C\,\Big(\frac{\mathcal{K}}{\e}\Big)^{-\frac{\sigma}{s-1}}.
\end{gather}
\end{theorem}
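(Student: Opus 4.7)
My plan is to reduce Theorem \ref{th:main_cascade} to the Sobolev-instability result for the cubic NLS (Theorem \ref{thm:growth}) via the Madelung transform. If $\psi$ is a smooth, non-vanishing solution of the defocusing cubic NLS $\ii\e\partial_t\psi = -\tfrac{\e^2}{2}\Delta\psi + |\psi|^2\psi$, writing $\psi = \sqrt{\rho}\,e^{\ii\phi/\e}$ and setting $v=\nabla\phi$ produces an irrotational solution $(\rho,v)$ of \eqref{eq:QHD} with $p(\rho)=\rho^2/2$; in these variables $\sqrt{\rho}=|\psi|$ and $\Lambda = \sqrt{\rho}\,v=\e\operatorname{Im}(\bar\psi\nabla\psi)/|\psi|$. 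Thus an NLS solution that is close to a plane wave and displays a large $H^s$-norm inflation automatically yields a QHD solution with the claimed cascade features.

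First I would fix the plane-wave background $\psi_{\mathrm{pw}}(t,x)=\sqrt{m}\,e^{-\ii m t/\e}$, which corresponds to the constant hydrodynamic state $(\rho_{\mathrm{pw}},v_{\mathrm{pw}})=(m,0)$, and invoke Theorem \ref{thm:growth} (possibly iterated, see below) to produce a smooth solution $\psi=\psi_{\mathrm{pw}}+\delta\psi$ on $[0,T]\times\T^d$, with initial perturbation of small $L^\infty$ and $H^s$ size, such that $\|\psi(T)\|_{H^s}/\|\psi(0)\|_{H^s}\geqslant \mathcal{K}'$ for some amplification factor $\mathcal{K}'\asymp \mathcal{K}$. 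The non-vacuum condition along $[0,T]$ would then follow from the identity $\rho-m=2\sqrt{m}\operatorname{Re}(\overline{\psi_{\mathrm{pw}}}\,\delta\psi)+|\delta\psi|^2$: tuning the instability mechanism so that $\|\delta\psi(t)\|_{L^\infty}\leqslant C(\mathcal{K}/\e)^{-\sigma/(s-1)}$ uniformly gives \eqref{close_weak_norm_exp} and forces $\rho\geqslant m/2>0$, so that the Madelung transform is well defined and $(\rho,v)$ is a genuine smooth solution of \eqref{eq:QHD}.

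The next step is to translate the growth of $\|\psi\|_{H^s}$ into growth of $\|(\sqrt{\rho},\Lambda)\|_{M^s}$. I would exploit the identity $\e\nabla\psi=(\e\nabla\sqrt{\rho}+\ii\sqrt{\rho}\,v)e^{\ii\phi/\e}$ together with its higher-order analogues, combined with Moser and Kato--Ponce-type composition estimates and the uniform lower bound on $\sqrt{\rho}$, to establish a tame two-sided equivalence
\[
c_m\,\|\psi\|_{H^s}\;\leqslant\;\|\sqrt{\rho}\|_{H^s}+\|\sqrt{\rho}\,v\|_{H^{s-1}}\;\leqslant\;C_m\bigl(\|\psi\|_{L^\infty}\bigr)\,\|\psi\|_{H^s}
\]
valid along the entire evolution. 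Feeding the NLS norm inflation into this equivalence then yields \eqref{eq:growth_K_exp}.

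The main obstacle is the quantitative and semiclassical bookkeeping that ultimately produces the exponential time bound \eqref{eq:T_exp}. Theorem \ref{thm:growth} already provides a polynomial-in-$\mathcal{K}$ time for inflating $\|\psi\|_{H^s}$ by a factor $\mathcal{K}$ starting from a small perturbation of a plane wave, but to simultaneously enforce (i) the strong $L^\infty$ tightness \eqref{close_weak_norm_exp}, (ii) the composition losses appearing in the previous step, and (iii) uniformity with respect to the semiclassical limit $\e\to 0$, the cleanest route is to chain the instability mechanism: each cycle multiplies the NLS $H^s$-norm by a fixed constant within a polynomial-in-$\e^{-1}$ time window, and concatenating $\sim \log\mathcal{K}$ cycles yields the exponential bound \eqref{eq:T_exp}, the prefactor $\e^{-9}$ absorbing the accumulated semiclassical cost of the partial Birkhoff normal-form transformations used in Theorem \ref{thm:growth}.
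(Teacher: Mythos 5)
Your general scheme — Madelung transform plus the NLS plane‑wave instability plus an equivalence of Sobolev norms — is the right skeleton and matches the paper's outline for Theorem~\ref{th:main_qhd}. But there are two concrete gaps in what you propose for Theorem~\ref{th:main_cascade}, and they are precisely the reasons the paper introduces extra machinery and ends up with the exponential bound.

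First, the claimed two-sided equivalence with constants depending only on $\|\psi\|_{L^\infty}$ (or the uniform lower bound on $\sqrt{\rho}$) does not hold in the direction you need. Controlling $\|\psi\|_{H^s}$ by $\|\sqrt{\rho}\|_{H^s}+\|\sqrt{\rho}v\|_{H^{s-1}}$ requires more: writing $\phi=e^{\ii\theta}$ and $\overline{\phi}\,\nabla u=\nabla\sqrt{\rho}+\ii\Lambda$, the key estimate has the form $\|u\|_{H^s}\lesssim\|(\sqrt{\rho},\Lambda)\|_{M^s}+\|u\|_{H^s}\|J^{-1}(\overline{\phi}\nabla u)\|_{L^\infty}$, and $\|J^{-1}(\overline{\phi}\nabla u)\|_{L^\infty}$ can only be made small if the \emph{product} $\inf_\varphi\|u-\sqrt{m}e^{\ii\varphi}\|_{\ell^1}\cdot\|u\|_{H^1}$ is small (Proposition~\ref{pr:equiv2}, condition~\eqref{eq:small1e}). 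The paper even remarks explicitly that $\Lambda=\sqrt{\rho}\,v$ might remain bounded in $M^s$ while both $\|\sqrt{\rho}\|_{H^s}$ and $\|v\|_{H^{s-1}}$ blow up; your proposed equivalence would rule this out, which it cannot. So you need the additional output~\eqref{waa} from Theorem~\ref{thm:growth}, not merely the $L^\infty$/$\ell^1$ tightness~\eqref{eq:clsm}.

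Second, the exponential time bound~\eqref{eq:T_exp} does not come from concatenating $\sim\log\mathcal{K}$ polynomial‑time cycles. That chaining argument is not available with the tools of the paper: the norm‑inflating orbit of Theorem~\ref{thm:growth} starts from a very particular initial datum (supported on a scaled resonant set $\Lambda$), and there is no statement guaranteeing that the \emph{endpoint} of one cycle is again a valid starting configuration for the next — indeed, making the construction start close to a plane wave in a strong topology is flagged in the paper as an open problem. Instead, the exponential bound is built into a single run: to secure the extra smallness $\inf_\varphi\|u-\sqrt{m}e^{\ii\varphi}\|_{\ell^1}\,\|u\|_{H^1}\leqslant\delta_*$ one must enlarge the scaling parameter $\lambda$ in the approximate Toy‑model orbit from $2^{(1+\sigma)\mathtt{N}}$ to roughly $\e^{-5}e^{\mathbf{c}^{\mathtt{N}}}$, and since $T\sim\e^2\lambda^2\mathtt{N}^2$ this already yields a time that is exponential in $\mathcal{K}^{1/(s-1)}$ with a prefactor $\e^{-9}$. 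In short: to close the argument you must use the second, exponential‑time variant of Theorem~\ref{thm:growth} together with Proposition~\ref{pr:equiv2}, not the polynomial variant plus iteration, and you must justify the equivalence of norms with the $\ell^1\cdot H^1$ product hypothesis rather than an $L^\infty$ hypothesis.
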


Comparing the above result with Theorem \ref{th:main_qhd}, we can notice the worse estimate on the time of growth, which is now exponential rather than polynomial in the growth-ratio $\mathcal{K}$, and blow-up (uniformly in $s>1$) when the semiclassical parameter $\e$ goes to zero. This is related to some technical issues arising when dealing with the quantity $\Lambda$. Note that, a priori, $\Lambda=\sqrt{\rho} v$ could stay bounded in $M^s$ even though $\|\sqrt{\rho}\|_{H^s}$ and $\|v\|_{H^{s-1}}$ are growing, a scenario which can not be excluded (at least for polynomial times) by our techniques. We also mention that \eqref{close_weak_norm_exp} can be actually replaced by an exponential bound (in the growth-ratio $\mathcal{K}$), which we do not write explicitly for the sake of concreteness.

\smallskip

As anticipated above, Theorems \ref{th:main_qhd}, \ref{th:main_cascade} rely on a novel result for NLS, that is Theorem \ref{thm:growth} below, which concerns the Sobolev instability of plane waves. 
We refer to the next section for further discussion on the results of Sobolev instabilities for NLS equations on compact manifolds.

\subsection{Connection with NLS equation and fast norm inflation close to plane waves}\label{ss_11}
The proofs of Theorems \eqref{th:main_qhd} and \eqref{th:main_cascade} are both based on the relation between the QHD system \eqref{eq:QHD} and a suitable \emph{semilinear} Schr\"odinger equation. More precisely, assuming that $u:\R_t\times\T^d\to \C$ is a solution to the cubic NLS equation
\begin{equation}\label{eq:NLS_stand}
	\mathrm{i}\varepsilon\partial_tu=-{\textstyle{\dfrac{\varepsilon^2}{2}}}\Delta u+ |u|^2u,
\end{equation}
and writing $u$ as $u=|u|e^{\mathrm{i}\theta}$, then its \emph{Madelung transform}
\begin{equation}\label{eq:madelung}
\rho:=|u|^2,\;v:=\varepsilon\nabla\theta,
\end{equation}
formally is a solution to system \eqref{eq:QHD}, with $\operatorname{rot}v=\operatorname{rot}\nabla\theta=0$. 

The connection between quantum hydrodynamic models and wave-functions dynamics (governed by Schr\"odinger-type equations), originally observed in the seminal work by Madelung \cite{Madelung}, has been widely exploited in the mathematical literature (see e.g.~\cite{CDS,AM-review} and references therein). 

The derivation of system \eqref{eq:QHD} from the NLS \eqref{eq:NLS_stand}, through the Madelung transform, 
can be readily justified if $u$ is sufficiently smooth and not vanishing at any point (see the explicit computation in the proof of Theorem \ref{th:main_qhd}). In presence of vacuum regions, and for low-regularity wave-functions, a more refined analysis is instead required \cite{AMQ3,AMQ2,AP,AM-review}. We also point out that, in general, it is unknown whether the solutions to \eqref{eq:QHD} provided by the NLS equation are unique (see \cite{AMZ1,AMZ2} for some recent results in this direction). In fact, reconstructing a wave-function from the hydrodynamic state requires the circulations of $v$ around vacuum regions to be quantized  \cite{PR-quanta,Bianchini}, a condition which is a priori only \emph{formally} preserved (even in the smooth case) in presence of vacuum regions.

As already discussed, we focus here on the non-vacuum regime, and more precisely we seek for solutions to the QHD system with mass density $L^{\infty}$-close to the constant density $\rho_{\mathrm{const}}=m$. Accordingly, on the Schr\"odinger side we consider small amplitude perturbations, up to a constant (in space) phase of a \emph{plane wave}
\begin{equation}\label{eq:plane_wave_original}
\mathtt{p}_{m,k}(t, x):=\sqrt{m}\,e^{\mathrm{i} k\cdot x-\ii\big(\e\frac{|k|^2}{2}+\frac{m}{\e}\big) t},  \qquad k\in \Z^2\,,
\end{equation}
that is the solution to \eqref{eq:NLS_stand} with initial data $\sqrt{m}e^{\ii k}$. 
The orbits $\mathtt{p}_{m,k}(t)$ correspond, through the Madelung transform, to the stationary solutions $(m, k)$ of the QHD system.

We then consider wave-functions of the form 
\begin{equation}\label{eq:smap}
u(t,x)=\sqrt{m}\,e^{\mathrm{i} (k\cdot x+\varphi(t))}+r(t,x),
\end{equation}
with $\varphi(t)\in\T$ and $\|r\|_{L_{t,x}^{\infty}}$ sufficiently small. 
In particular, $u(t)$ is $L^{\infty}$-close to the embedded one dimensional torus supporting the plane wave orbits in the phase space.
Observe that this yield the desired condition $\rho>0$, but does not give any information on the closedness of $v(x)$ to the constant velocity field $k$.


For technical reasons, it is useful to measure the vicinity to the plane wave using a stronger norm than $L^{\infty}$. More precisely, we consider the Wiener space
\begin{equation}\label{eq:def_wiener}
	\ell^1(\T^d):=\left\{ u\colon \T^d\to \mathbb{C} : \| u \|_{\ell^1(\T^d)}:=\|\widehat{u}\|_{\ell^1(\Z^d)}<+\infty \right\},
\end{equation}
which is a Banach algebra with respect to the pointwise product. Note indeed that the $\ell^1$ norm controls the $L^{\infty}$ norm, as expressed by \eqref{eq:emb}, and also the $L^2$ norm (in view of the embedding $\ell^{1}(\Z^d)\hookrightarrow \ell^2(\Z^d)$ and  the Plancharel identity).

Our next result shows that the NLS equation \eqref{eq:NLS_stand} admits smooth solutions of the form \eqref{eq:smap}, with $\|r\|_{L_t^{\infty}\ell_x^1}$ sufficiently small, displaying arbitrarily large, \emph{polynomially} fast growth of the $H^s$ norm, $s>1$, as time evolves. A result describing such behavior for solutions close to quasi-periodic finite gap orbits, has been first proved by Guardia-Hani-Haus-Maspero-Procesi in \cite{GuardiaHHMP19}, with \emph{exponential} upper bounds on the time of growth. Here we also show that, by retaining the exponential upper bound, we can additionally obtain the extra smallness assumption \eqref{waa} for $\|r\|_{L_t^{\infty}\ell_x^1}$ in terms of the $H^1$ norm of the solution, which in our construction is of size $\exp(\mathcal{K})$. We explicitly consider only the two-dimensional case, the same result then holds in dimension $d\geqslant 3$ by considering solutions which are only dependent upon two of the spatial coordinates.

\begin{theorem}\label{thm:growth}
	Fix $m,\sigma >0$, $\e\in(0,1]$, $s>1$, $k\in\Z^2$. There exist $C:=C(m)>0$ such that the following holds. For every $\mathcal{K}>0$ large enough, there exists a global, smooth solution $u(t)$ of the cubic NLS equation \eqref{eq:NLS_stand} on $\T^2$ such that
	\begin{gather}
		\label{eq:grka}
		\frac{\|u(T) \|_{H^s}}{\| u(0) \|_{H^s}}\geqslant \mathcal{K}  \qquad \mathrm{for\,\,\,\,\,some} \qquad 0<T\leqslant \frac{C \e}{s-1}\, \mathcal{K}^{\frac{3(1+\sigma)}{s-1}},\\
		\label{eq:clsm}
		\sup_{t\in [0, T]} \,\,\inf_{\varphi\in \T} \| u(t)-\mathtt{p}_{m, k}(0)\,e^{\mathrm{i} \varphi} \|_{\ell^1}\leqslant  C \mathcal{K}^{-\frac{\sigma}{s-1}}.
	\end{gather}
	Moreover, given $\delta_*>0$, and allowing $C$ to depend also on $\delta_*$, we can construct a global, smooth solution $u(t)$ to \eqref{eq:NLS_stand} satisfying
	\begin{equation}\label{wa}
		\frac{\|u(T) \|_{H^s}}{\| u(0) \|_{H^s}}\geqslant \mathcal{K}  \qquad \mathrm{for\,\,\,\,\,some} \qquad 0<T\leqslant C\e^{-9}\,e^{\mathcal{K}^{\frac{c}{s-1}}},
	\end{equation}
for a suitable universal constant $c>0$, the bound \eqref{eq:clsm}, and the additional estimate
	\begin{equation}\label{waa}
		\sup_{t\in [0, T]} \,\left(\,\inf_{\varphi\in \T} \| u(t)-\mathtt{p}_{m,k}(0)\,e^{\mathrm{i} \varphi} \|_{\ell^1}\right)\,\, \| u(t)\|_{H^1}\leqslant \delta_*.
	\end{equation}
	
\end{theorem}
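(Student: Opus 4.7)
The plan is to reduce the problem to studying perturbations of a constant background, build an effective finite-dimensional resonant Hamiltonian via a partial Birkhoff normal form, and import a CKSTT-style cascade construction, following the strategy of \cite{GuardiaHHMP19} but adapted to the plane-wave setting. Concretely, I would first apply a Galilean boost together with a gauge phase, setting
\begin{equation*}
u(t,x)=e^{\ii(k\cdot x-(\e|k|^2/2+m/\e)t)}\,\tilde u(t,x-\e k t),
\end{equation*}
which transforms \eqref{eq:NLS_stand} into a cubic NLS for $\tilde u$ admitting the constant $\sqrt m$ as a stationary solution. Writing $\tilde u=\sqrt m+v$ and expanding the Hamiltonian $H(\tilde u)=\frac{\e}{2}\int|\nabla \tilde u|^2+\frac{1}{2\e}\int|\tilde u|^4$ in powers of $v,\bar v$ produces a quadratic part that, unlike the one arising around the zero solution, contains the non–gauge-invariant pieces $\int v^2$ and $\int \bar v^2$ (the Bogoliubov block), together with cubic and quartic contributions of varying degree of homogeneity.

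Next I would use the mass symmetry to reduce by the zero Fourier mode (action–angle $(I_0,\theta_0)$) and, on the symplectic complement, perform a \emph{partial} Birkhoff normal form. The novelty compared to the standard NLS-around-zero setting is that the quadratic non-homogeneous Hamiltonian terms do not vanish with $v$ and must be explicitly normalized, essentially by a Bogoliubov rotation that diagonalizes the linearized flow and yields the corrected dispersion $\Omega(k)\asymp\e|k|^2+|k|$ visible at the hydrodynamic level. After this, I would kill non-resonant cubic terms by a further canonical transformation, carefully tracking the small divisors (which on $\T^2$ are benign once the constant background dispersion is incorporated) and leaving a resonant effective Hamiltonian of the form $H_{\mathrm{res}}=H_{\mathrm{quad}}+Z_{\mathrm{res}}+\mathcal R$, with $\mathcal R$ quartic and of controlled size in $\ell^1$.

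Then I would construct the toy model. Following CKSTT, I would select a set $\Lambda\subset\Z^2\setminus\{0\}$ partitioned into generations $\Lambda_1,\ldots,\Lambda_N$, so that the resonant interactions in $Z_{\mathrm{res}}$ couple only consecutive generations and the ODE obtained by restricting to $\Lambda$ admits a ``traveling wave'' orbit that shifts the $\ell^2$–mass from $\Lambda_1$ to $\Lambda_N$ in time $T_{\mathrm{toy}}$. Choosing $N$ of order $\log\mathcal K/(s-1)$ gives $H^s$ growth by a factor $\geqslant\mathcal K$; tuning the amplitude of the initial datum controls both $T_{\mathrm{toy}}$ and the $\ell^1$ distance to $\mathtt p_{m,k}$, with a small amplitude producing the exponential time bound \eqref{wa} together with the additional smallness \eqref{waa}, and a moderately larger amplitude producing the polynomial time \eqref{eq:grka}. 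Closing with an approximation argument, I would propagate the toy-model orbit through the full (post-BNF) Hamiltonian by a Gronwall-type energy estimate in $\ell^1$ and $H^s$, using that $\ell^1(\T^2)$ is a Banach algebra to control the quartic remainder and the tail modes outside $\Lambda$ on the prescribed time window.

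The main obstacle, as flagged by the authors, is the \textbf{normalization of the non-homogeneous quadratic Hamiltonian} arising from the background $\sqrt m$: the linearized flow is no longer block-diagonal in the creation/annihilation basis, so the standard homological equation must be solved with respect to the Bogoliubov-corrected frequencies rather than the free Schrödinger ones. A secondary difficulty is the bookkeeping needed to make the error estimates uniform in $\e\in(0,1]$, since the frequencies carry mixed $\e|k|^2+|k|$ scaling and the transformations involved have norms that must be shown to remain bounded as $\e\to 0$; this is what ultimately produces the semiclassical uniformity in $T$ at high regularity claimed by the theorem.
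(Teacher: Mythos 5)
Your high-level program matches the paper's: reduce to $k=0$ by a Galilean boost \eqref{psiu}, symplectically reduce the plane wave to a fixed point, diagonalize the quadratic part by a Bogoliubov-type rotation (Proposition \ref{prop:S}), apply a partial Birkhoff normal form, restrict to a CKSTT set $\Lambda$ of generations, run the toy model, and close by a Gronwall argument in $\ell^1$. However, the central mechanism that actually produces the \emph{polynomial} time bound \eqref{eq:grka} is missing, and the mechanism you propose in its place would not deliver it.

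You attribute the polynomial-versus-exponential dichotomy to the size of the initial amplitude. In the paper the polynomial bound is obtained by a different device: the set $\Lambda$ is scaled by a large integer $q$ (so $\Lambda\subset q\,\Z^2$), and the small divisors in the homological equations satisfy the lower bounds $\gtrsim q^2$ of Lemmata \ref{lem:smalldiv}--\ref{lem:smalldiv2}. Consequently the remainder $\mathcal R$ of the normal form has a vector field that is still of \emph{cubic} order in $\|w\|_{\ell^1}$ but carries a prefactor $\delta^2=(\e q)^{-2}$ that can be tuned arbitrarily small (estimate \eqref{bound:remainder}). It is this tunable prefactor, not a higher vanishing order and not a larger amplitude, that makes $\mathcal R$ perturbative on the toy-model time scale and produces the polynomial bound. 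Your Gronwall closure with a quartic $\mathcal R$ of size controlled only by the amplitude would, as in \cite{GuardiaHHMP19}, force exponential times.

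A second, related gap: you propose to ``kill non-resonant cubic terms by a further canonical transformation'' after Taylor-expanding around $\sqrt m$. The paper deliberately avoids this. The symplectic reduction leaves the non-homogeneous cubic term $\mathcal K_1=\e^{-2}f(\|w\|_{L^2}^2)\,\mathcal G^{(3)}$ in \eqref{calK}, with $f(x)=\tfrac1{2\pi}\sqrt{4\pi^2m-x}$ an analytic function with a full jet; Taylor-expanding $f$ and normalizing degree by degree would generate new non-perturbative quartic and higher terms and destroy the polynomial bound. The paper instead normalizes $\mathcal K_1$ in a single step, exploiting the closure of Hamiltonians of the form $f(\|w\|_{L^2}^2)\mathcal F^{(3)}$ under the adjoint action $\{\mathcal H^{(2)},\cdot\}$ (Lemma \ref{lem:stimeBNF} and Remark \ref{rem:special}). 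Without this, the partial normal form of Proposition \ref{prop:wbnf} and hence the estimate \eqref{bound:remainder} are out of reach. Two further ingredients you omit but that the argument needs are property $(P7)$ of $\Lambda$ (no $3$-wave NLS resonances involving two modes of $\Lambda$, essential for the $3$-wave small-divisor bound), and the smoothing structure $S=\mathrm I+\Psi$, $\Psi\colon H^s\to H^{s+2}$, of the Bogoliubov map, which is what makes the term $\mathcal R_{*,1}=\mathcal H^{(4,\le1)}-\e^{-2}H^{(4,\le1)}$ small in the proof of Proposition \ref{prop:wbnf}.
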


For $\mathcal{K}$ large enough, estimate \eqref{eq:clsm} implies in particular absence of vacuum, at least up to the time $T$ when the growth of Sobolev norm is attained. As already discussed, this readily implies that the Madelung transform of $u$ provides a smooth solution to  the QHD system \eqref{eq:QHD} on $[0,T]\times\T^2$. In addition, estimates \eqref{eq:clsm} and \eqref{waa} allow, respectively, to obtain suitable equivalences between the $H^s$-norm of the wave-function $u$ and the $M^s$-norm of the corresponding hydrodynamic states $(\rho,v)$ (Proposition \ref{pr:equiv1}) and $(\sqrt{\rho},\Lambda)$ (Proposition \ref{pr:equiv2}), eventually leading to the existence of weakly turbulent solutions to the QHD system \eqref{eq:QHD} (when $d=2$, and then lifted to higher dimension), as stated in Theorems \ref{th:main_qhd} and \ref{th:main_cascade}.

\begin{remark}
We point out that in dimension $d=1$ the cubic NLS equation is completely integrable, and its infinite conservation laws provide uniform in time bounds for all the Sobolev norms of the solutions. Accordingly, we expect stability for the 1d QHD system in the non-vacuum regime.
\end{remark}
 
 \begin{remark}
We mention the work \cite{Faou} by Faou-Gauckler-Lubich, where the authors provide a Nekhoroshev stability result in Sobolev regularity for solutions starting close to a large set of plane waves.
 We cannot really compare Theorem \ref{thm:growth} and this result, since in \cite{Faou} the solutions considered start close to plane waves in the strong $H^s$ norm. 
\end{remark}

\medskip\noindent\textbf{Overview on results of growth of Sobolev norms for NLS.} Theorem \ref{thm:growth} falls into the category of results of existence of solutions to NLS equations undergoing an arbitrarily large growth of Sobolev norms, initiated by the works of Bourgain \cite{Bou95,Bou96} and Kuksin \cite{Kuk}. In 2010 this study has seen a significant progress after the seminal work by Colliander-Keel-Staffilani-Takaoka-Tao \cite{CKSTT} concerning the existence of solutions to the cubic NLS on $\T^2$ with arbitrarily small data and arbitrarily large $H^s$ norms, with $s>1$, at later times. 
Hani \cite{Hani} proved the existence of solutions with initial data arbitrarily close to plane waves in $H^s$ exhibiting a $H^s$-norm explosion with $s\in (0, 1)$.
Results similar to \cite{CKSTT} have been proved by Guardia for NLS equations with convolution potentials, Haus-Procesi \cite{HPquintic} for the quintic NLS equation, and Guardia-Haus-Procesi \cite{GuardiaHP16} for any NLS equation with analytic nonlinearities. Hani-Pausader-Tzvetkov-Visciglia \cite{RT2} considered the cubic NLS on the product space $\R\times\T^2$, showing the existence of orbits with unbounded Sobolev trajectories (the analogous result on compact manifold remains an open question). Recently, Maspero-Murgante \cite{MasMur} proved lower bounds on the growth of Sobolev norms for solutions to a class of fractional NLS equations with one derivative in the nonlinearity.

Upper bounds on the time $T$ at which the solutions achieve a prescribed growth of norms have been provided for the first time by Guardia-Kaloshin in \cite{GuardiaK12}.
More precisely, they showed that solutions $u(t)$ to the cubic NLS satisfying  
\[
\| u(0) \|_{H^s}\leqslant \mu, \qquad\| u(T)\|_{H^s}\geqslant \mathcal{K}, \qquad \mu\ll 1, \qquad \mathcal{K}\gg 1,
\]
can be constructed in such a way that the time $T$ has an exponential-type upper bound in terms of $\cK/\mu$, that is
\[
0<T \leqslant \exp\left({\left({\cK}/{\mu}\right)^c}\right) \qquad \mathrm{for\,\,some\,\,}c>0.
\] They also show that, if one does not assume smallness on the $H^s$ norm of the initial datum, then one can construct solutions $u(t)$ exhibiting an arbitrarily large growth, say
\[
\| u(T) \|_{H^s}\geqslant \cK \| u(0)\|_{H^s} \quad  \mathrm{for\,\, an\,\, arbitrarily\,\, large}\,\,\cK>0,
\]
with the time $T$ satisfying a \emph{polynomial} bound in $\cK$, namely
\[
0<T\leqslant \cK^c\qquad \mathrm{for\,\,some\,\,}c>0.
\]
Moreover, in this case, the solutions can be chosen to have an arbitrarily small $L^2$ norm.
The result provided by Theorem \ref{thm:growth} is of this type, but it concerns solutions arbitrarily close in the $\ell^1$-norm (and thus also in the $L^2\cap L^{\infty}$ norm) to plane waves. The existence of such solutions has been previously proved in \cite{GuardiaHHMP19}, which considered the more general problem of solutions starting close to finite gap quasi-periodic solutions of the cubic NLS on $\T^2$. The upper bounds on the time of the growth provided in  \cite{GuardiaHHMP19} are \emph{exponential} in the factor of the growth $\mathcal{K}$, while in Theorem \ref{thm:growth} these bounds are \emph{polynomial} in $\mathcal{K}$. 
This improvement has been obtained also by the first author in \cite{Giu} for cubic NLS equations, with and without the presence of convolution potentials, on both rational and irrational two dimensional tori.

Finally, we mention that Sobolev norms inflation for NLS solutions has been first shown in the irrational setting by the first author and Guardia \cite{GiuGu}.

\subsection{Open problems and perspectives}\label{sec:op}
The QHD system \eqref{eq:QHD} admits stationary states of the form $(m,k)$, $m>0$, $k\in\Z^d$, representing a constant mass density at equilibrium (in a reference frame moving at velocity $k$). A challenging open problem concerns the possible \emph{orbital} instability, in the $M^s$-topology with $s>1$, of these stationary solutions. This would require to improve Theorem \ref{thm:growth} (possibly with weaker upper bounds on the time of growth) by showing that the orbits with growing $H^s$ Sobolev norms, with $s>1$, may be additionally chosen to start close, in the $H^s$-norm, to a plane wave.
%

We also point out that we do not have any a priori information on which between $\|\rho\|_{H^s}$ and $\|v\|_{H^{s-1}}$ (respectively between $\|\sqrt{\rho}\|_{H^s}$ and $\|\Lambda\|_{H^{s-1}}$ ) actually grow in time. This remains an interesting open question, which requires new ideas and insights from classical turbulence theory in fluid dynamics.

It would be possible to extend Theorem \ref{thm:growth} to analytic non-linearities, in the same spirit of the aforementioned paper \cite{GuardiaHP16}. In turn, this would allow to generalize Theorems \ref{th:main_qhd} and \ref{th:main_cascade} to the case of analytic (barotropic) pressures. Another extension is possible in the framework of irrational tori, using the ideas of \cite{GiuGu}.

As already mentioned, the QHD system belongs to a larger class of fluid dynamics models, provided by the Euler-Korteweg system
\begin{equation}\label{eq:EK}
	\begin{cases}
		\partial_t \rho+\operatorname{div}(\rho v)=0, \\
		\partial_t (\rho v)+\operatorname{div}(\rho v \otimes v)+ \nabla p(\rho)=\rho\nabla \operatorname{div}\Big((k(\rho)\nabla\rho)-{\textstyle{\frac{1}{2}}}k'(\rho)|\nabla\rho|^2\Big),
	\end{cases}
\end{equation}
where $k(\rho)\geqslant 0$ is the so-called capillarity coefficient. System \eqref{eq:EK} arises, among the others, in the description of capillarity effects in diffuse interfaces, see e.g.~ \cite{Kort,SBG}. The choice $k(\rho)=\frac{1}{4\rho}$ leads to system \eqref{eq:QHD}.  As for QHD, the Euler-Korteweg system \eqref{eq:EK} is also related, through the Madelung transform, to a wave-function dynamics, given in this case by the \emph{quasi-linear} NLS 
\begin{equation}\label{eq:quasilinear}
	\ii \partial_t u={\textstyle{-\frac{1}{2}}} \Delta  u-\operatorname{div}\big(\tilde{k}(|u|^2) \nabla|u|^2\big)u+{\textstyle{-\frac{1}{2}}}\tilde{k}^{\prime}(|u|^2)|\nabla|u|^2|^2u+|u|^2u,
\end{equation}
where $\tilde{k}(\rho)=k(\rho)-\frac{1}{4\rho}$. Observe that with the choice $k(\rho)=\frac{1}{4\rho}$ \eqref{eq:quasilinear} reduce to the \emph{semi-linear} NLS equation \eqref{eq:NLS_stand}. It would be interesting to extend Theorem \ref{thm:growth} to quasi-linear NLS of the form \eqref{eq:quasilinear}, allowing then for weak-turbulence type results for the Euler-Korteweg system.

Finally, in the context of weak turbulence theory, an important challenge is to derive an effective statistical description (at certain time-frequency scales) of the interaction between non-linear waves in system \eqref{eq:QHD}. For the NLS equation, such effective model is rigorously provided by a suitable quantum version of the Boltzmann kinetic equation \cite{Hani-Deng}. A similar analysis for the QHD system is still missing, even at formal level, and can not be directly deduced from the corresponding results for the NLS. Such analysis would be crucial for a deeper understanding of the turbulence mechanisms for quantum fluid systems.

\medskip\noindent\textbf{Acknowledgements}. Filippo Giuliani has received funding from PRIN-20227HX33Z 
\textit{Pattern formation in nonlinear phenomena - Funded by the European Union-Next Generation} EU, Miss. 4-Comp. 1-CUP D53D23005690006. The authors acknowledge partial support from INdAM-GNAMPA.

\medskip\noindent\textbf{Notation.}
We write $A\lesssim B$ when $A\leqslant CB$ for some positive constant $C$. When $C$ depends on some parameter $k$, we write $A\lesssim_{k}B$. We set $\langle x\rangle:=\sqrt{1+x^2}$. The Fourier transform $\widehat{f}$ of a function $f\in\T^d$, given by $\widehat{f}(k)=(2\pi)^{-d}\int_{\T^d}f(x)e^{-ik\cdot x}dx$, will be occasionally also denoted by $\mathcal{F}f$. The symbol $J^s$, for $s\in\R$, denotes the multiplier operator given by $\widehat{J^sf}(k):=\langle k\rangle^s\widehat{f}(k)$. For $d\in\N^+$, $s\geq 0$ we denote by $H^s(\T^d)$ the classical Sobolev space of functions $f\in L^2(\T^d)$ such that $J^sf \in L^2(\T^d)$. 

\section{Proof of the main results}
In this section we prove our main results, Theorems \ref{th:main_qhd} and \ref{th:main_cascade}, on the norm inflation and the direct energy cascade for the QHD system. It will be sufficient to explicitly consider only the two-dimensional case, as 2d-solutions can be embedded into higher dimension -- see \eqref{eq:embed} below. We will use the weak turbulence result for the NLS equation \eqref{eq:NLS_stand}, provided by Theorem \ref{thm:growth}, whose proof is postponed to the next section. We will also exploit suitable equivalence results (see Propositions \ref{pr:equiv1} and \ref{pr:equiv2} below) between the $H^s$-norm of the wave-function $u$ and the $M^s$-norm of the hydrodynamic states $(\rho,v)$ and $(\sqrt{\rho},\Lambda)$, respectively.

\begin{proposition}\label{pr:equiv1}
Fix $s>1$, $m>0$, $\e\in(0,1]$ and $\varphi\in\T$. Given $u=|u|e^{\mathrm{i}\theta}\in H^s(\T^2)$, with
\begin{equation}\label{eq:clm}
\|u-\sqrt{m}e^{\mathrm{i}\varphi}\|_{L^{\infty}}\leqslant{\textstyle{\frac{\sqrt{m}}{2}}},
\end{equation}
and setting $\rho:=|u|^2$, $v:=\e\nabla\theta$, we have the estimate
	\begin{equation}\label{eq:equi}
	M^{-1}\|(\rho,v)\|_{M^{s}}\leqslant \|u\|_{H^s}\leqslant M\e^{-1}\|(\rho,v)\|_{M^{s}},
\end{equation}
for some constant $M:=M(s,m)>1$.
\end{proposition}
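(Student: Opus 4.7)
The plan is to carry out the equivalence via the Moser composition calculus in Sobolev spaces, combined with the Kato--Ponce tame inequality for $H^s(\T^2)$, which is an algebra since $s>1=d/2$. As a preliminary step, I would extract from the hypothesis \eqref{eq:clm} the pointwise bounds $|u|\in[\sqrt{m}/2,3\sqrt{m}/2]$ (so $\rho=|u|^2\in[m/4,9m/4]$ is uniformly bounded away from zero) and, by expanding $|u-\sqrt{m}e^{\ii\varphi}|^2=|u|^2+m-2\sqrt{m}|u|\cos(\theta-\varphi)\le m/4$, the phase bound $\cos(\theta-\varphi)\ge\sqrt{3}/2$. Hence the phase can be chosen as a globally defined function $\theta\in H^s(\T^2)$ with $|\theta-\varphi|\le\pi/6$ everywhere, and the normalized perturbation $z:=u/(\sqrt{m}e^{\ii\varphi})-1$ satisfies $\|z\|_{L^\infty}\le 1/2$ and $\|z\|_{H^s}\lesssim_m\|u\|_{H^s}$ (the latter using the elementary lower bound $\|u\|_{H^s}\ge\|u\|_{L^2}\gtrsim_m 1$ to absorb the constant shift).

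For the lower bound $\|(\rho,v)\|_{M^s}\lesssim_{s,m}\|u\|_{H^s}$, the density estimate is immediate from the tame inequality: $\|\rho\|_{H^s}=\|u\bar u\|_{H^s}\lesssim_s\|u\|_{L^\infty}\|u\|_{H^s}\lesssim_m\|u\|_{H^s}$. The velocity would be treated via the \emph{logarithmic-derivative} identity $\nabla\theta=\operatorname{Im}(\nabla u /u)=\operatorname{Im}\nabla(\log u)$, valid since $u$ is nowhere vanishing. Writing $\log u=\log(\sqrt{m}e^{\ii\varphi})+\log(1+z)$ modulo constants and applying the Moser composition estimate to the smooth function $\zeta\mapsto\log(1+\zeta)$ on $\{|\zeta|\le 1/2\}$ (which vanishes at the origin) yields $\|\log(1+z)\|_{H^s}\lesssim_s\|z\|_{H^s}\lesssim_m\|u\|_{H^s}$. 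Taking one spatial derivative, together with $\e\le 1$, gives $\|v\|_{H^{s-1}}\le\e\|\nabla\log(1+z)\|_{H^{s-1}}\lesssim_{s,m}\|u\|_{H^s}$.

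For the upper bound $\|u\|_{H^s}\lesssim_{s,m}\e^{-1}\|(\rho,v)\|_{M^s}$, I would reconstruct $u=\sqrt{\rho}\,e^{\ii\theta}$ and estimate each factor separately in $H^s$. The modulus is controlled via Moser composition with the smooth map $\tau\mapsto\sqrt{m+\tau}-\sqrt{m}$ on $[-3m/4,5m/4]$, producing $\|\sqrt{\rho}-\sqrt{m}\|_{H^s}\lesssim_{s,m}\|\rho-m\|_{H^s}\lesssim_m\|\rho\|_{H^s}$ after absorbing the constant via $\|\rho\|_{L^2}\gtrsim m$. For the phase factor, writing $\theta=\varphi+\tilde\theta$ with $\|\tilde\theta\|_{L^\infty}\le\pi/6$ and $\nabla\tilde\theta=v/\e$, the standard interpolation $\|\tilde\theta\|_{H^s}\lesssim\|\tilde\theta\|_{L^2}+\|\nabla\tilde\theta\|_{H^{s-1}}$ yields $\|\tilde\theta\|_{H^s}\lesssim 1+\e^{-1}\|v\|_{H^{s-1}}$, and a second Moser composition with $\zeta\mapsto e^{\ii\zeta}-1$ gives $\|e^{\ii\tilde\theta}-1\|_{H^s}\lesssim_s\|\tilde\theta\|_{H^s}$. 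Combining the two factors through the Kato--Ponce tame inequality (both are uniformly bounded in $L^\infty$ by constants depending only on $m$) then produces the desired estimate after absorbing lower-order constants.

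The main delicate point is preserving linearity in the norms, so that $M$ depends only on $s$ and $m$ rather than polynomially on $\|u\|_{H^s}$: a naive application of the product rule $H^s\cdot H^{s-1}\hookrightarrow H^{s-1}$ to the quotient $\operatorname{Im}(\bar u\nabla u)/|u|^2$ defining the velocity would yield quadratic or cubic growth, which is incompatible with the statement. The logarithmic-derivative reformulation circumvents this by reducing the analysis of $\nabla\theta$ to a single Moser composition of a smooth function with the small perturbation $z$, which is genuinely linear in $\|u\|_{H^s}$.
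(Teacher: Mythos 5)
Your proposal is correct, and it takes a genuinely different route from the paper's for the key step, which is bounding the velocity $v=\e\nabla\theta$ in terms of $\|u\|_{H^s}$ with a constant independent of $\|u\|_{H^s}$ itself. You observe $\nabla\theta=\Im(\nabla u/u)=\Im\,\nabla\log(1+z)$ with $z=u/(\sqrt m e^{\ii\varphi})-1$, $\|z\|_{L^\infty}\le 1/2$, and then a single composition estimate for the holomorphic map $\zeta\mapsto\log(1+\zeta)$ gives $\|\nabla\theta\|_{H^{s-1}}\lesssim\|\log(1+z)\|_{H^s}\lesssim\|z\|_{H^s}\lesssim_m\|u\|_{H^s}$, linear in $\|u\|_{H^s}$ as required. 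The paper instead sets $\varphi=0$ WLOG, writes $\theta=\arctan(\Im u/\Re u)\in[-\pi/4,\pi/4]$, and passes through a chain $\|\nabla\theta\|_{H^{s-1}}\lesssim\|\tan\theta\|_{H^s}\lesssim\|\sin\theta\|_{H^s}+\|\cos\theta\|_{H^s}\lesssim\|e^{\ii\theta}\|_{H^s}$, using composition with $\arctan$ and with $x\mapsto 1/x$ (for $\sec\theta$); then closes by a tame product bound $\|e^{\ii\theta}\|_{H^s}=\|u\,\rho^{-1/2}\|_{H^s}\lesssim\|u\|_{H^s}\|\rho^{-1/2}\|_{L^\infty}+\|u\|_{L^\infty}\|\rho^{-1/2}\|_{H^s}\lesssim\|u\|_{H^s}$, where the last inequality needs a further composition estimate for $\rho^{-1/2}$. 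Your logarithmic-derivative reformulation is more direct and avoids the intermediate $\tan/\sec$ manipulations, and your final paragraph correctly identifies why linearity in $\|u\|_{H^s}$ is the delicate point (the paper achieves it by pairing the $\|u\|_{H^s}$- and $\|\rho^{-1/2}\|_{H^s}$-factors with $L^\infty$-bounded partners in the tame Leibniz rule). The only small gap relative to the paper's toolbox is that the paper's composition lemma (their Lemma on real-valued compositions) is stated for real scalar $f$, whereas you apply it to the complex-valued $z$ and the holomorphic map $\log(1+\cdot)$ resp.\ $e^{\ii\cdot}$; the vector-valued version of the Moser estimate is standard (treat $\C\cong\R^2$), but you should say you are invoking that extension. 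The reconstruction direction $\|u\|_{H^s}\lesssim\e^{-1}\|(\rho,v)\|_{M^s}$ via $\sqrt\rho$, $\tilde\theta$, and $e^{\ii\tilde\theta}$ matches the paper's argument in substance; you carry the $\e$-dependence inline through $\nabla\tilde\theta=v/\e$ while the paper first proves the $\e=1$ case and then deduces the general case, a cosmetic difference.
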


		\begin{remark}
	It is worth pointing out that a condition as \eqref{eq:clm}, which guarantees that the wave function is (strictly) non-vanishing, is unavoidable in order to prove the equivalence of norms \eqref{eq:equi}.
		\end{remark}

A result in the same spirit of Proposition \ref{pr:equiv1} have been obtained in \cite[Lemma 2.1]{FIM}, in the context of long-time stability for the QHD system on irrational tori. We also mention \cite[Theorem 1.6]{Wegner}, where the author proves a local bi-Lipschitz equivalence between Schr\"odinger and QHD norms on $\R$, also allowing for non-trivial boundary conditions at infinity. 

In the above results, the constants in the equivalence of norms may depend on the $H^s$-norm of the wave-function. In view of our application to the existence of solutions with \emph{large} $H^s$-norm, here it is crucial instead that the constants in \eqref{eq:equi} depend only on the $L^{\infty}$-norm of $u$, which we are able to control \emph{uniformly in time} for the solutions to NLS constructed in Theorem \ref{thm:growth}.

\begin{proof}[Proof of Proposition \ref{pr:equiv1}]
We first consider the case $\e=1$. Observe that $u$ and $e^{-\ii\varphi}u$ have the same $H^s$-norm, and lead to the same hydrodynamic variables $(\rho,v)$. Without loss of generality, we can then set $\varphi=0$.

In view of \eqref{eq:clm}, we have $\Re (u)\geqslant\frac{\sqrt{m}}{2}>0$ and $\Im(u)\leqslant\frac{\sqrt{m}}{2}$. In particular, $\theta$ is given (up to an integer multiple of $2\pi$) by 
$$\theta=\arctan\left(\frac{\Im(u)}{\Re(u)}\right)\in{\textstyle{\big[\!-\frac{\pi}{4},\frac{\pi}{4}\big]}}.$$
We have the bound
\begin{equation}\label{eq:vt-temp}
\|v\|_{H^{s-1}}=\|\nabla\theta\|_{H^{s-1}}\lesssim\|\theta\|_{H^{s}}\lesssim_{s} \|\tan\theta\|_{H^s},
\end{equation}
where we used the composition Lemma \ref{le:compo} with $h(x)=\arctan(x)$ in the last step (observe that $\|\tan\theta\|_{L^{\infty}}\leqslant 1$). Moreover, since $|\cos\theta|\geqslant\frac{\sqrt{2}}{2}>0$, Lemma \ref{le:compo} yields $\|\sec\theta\|_{H^s}\lesssim_s\|\cos\theta\|_{H^s}$. Using \eqref{eq:vt-temp} and the Leibniz rule \eqref{eq:bil_sym}  we then obtain
\begin{equation}\label{eq:vt}
	\begin{split}
\|v\|_{H^{s-1}}&\lesssim_s \|\tan\theta\|_{H^s}\lesssim_s \|\sin\theta\|_{H^s}\|\sec\theta\|_{L^{\infty}}+\|\sin\theta\|_{L^{\infty}}\|\sec\theta\|_{H^s}\\
&\lesssim_s\|\sin\theta\|_{H^s}
+\|\cos\theta\|_{H^s}\lesssim\|e^{\ii\theta}\|_{H^s}.
	\end{split}
\end{equation}
 On the other side, denoting by $\theta_0:=\theta-\int_{\T^2}\theta(x)dx$ the projection of $\theta$ onto the space of zero mean functions, we have
\begin{equation}\label{eq:tv}
\|e^{\ii\theta}\|_{H^s}=\|e^{\ii\theta_0}\|_{H^s}\leqslant\|\cos\theta_0\|_{H^s}+\|\sin\theta_0\|_{H^s}\lesssim_{s} 1+\|\theta_0\|_{H^s}\lesssim 1+\|\nabla\theta\|_{H^{s-1}}=1+\|v\|_{H^{s-1}},
\end{equation}
where we used Lemma \ref{le:compo} in the third step, and Poincar\'e inequality in the fourth step.

Next, using the fractional Leibniz rule \eqref{eq:bil_sym} we get
\begin{equation}\label{eq:est-rho-sm}
\|\rho\|_{H^s}=\|u\overline{u}\|_{H^s}\lesssim_s\|u\|_{H^s}\|\overline{u}\|_{L^{\infty}}+\|u\|_{L^{\infty}}\|\overline{u}\|_{H^s}\lesssim_{s,m} \|u\|_{H^s}.
\end{equation}
Moreover, \eqref{eq:clm} and the bound $||a|-|b||\leqslant|a-b|$ yields $\frac12\sqrt{m}\leqslant \sqrt{\rho} \leqslant \frac32\sqrt{m}$. In particular, the composition Lemma \ref{le:compo} and \eqref{eq:est-rho-sm} give
\begin{gather}\label{eq:esum}
\|\sqrt{\rho}\|_{H^s}\lesssim_{s,m}\|\rho\|_{H^s}\lesssim_{s,m} \|u\|_{H^s},\\
\label{eq:erum}
\|\rho^{-1/2}\|_{H^s}\lesssim_{s,m}\|\rho\|_{H^s}\lesssim_{s,m} \|u\|_{H^s}.
\end{gather}
Observe now that
$$\|v\|_{H^{s-1}}\lesssim_{s}\|e^{\ii\theta}\|_{H^s}\lesssim_s\|\sqrt{\rho}e^{\ii\theta}\|_{H^s}\|\rho^{-1/2}\|_{L^{\infty}}+\|\sqrt{\rho}e^{\ii\theta}\|_{L^{\infty}}\|\rho^{-1/2}\|_{H^s}\lesssim_{s,m} \|u\|_{H^s},$$
where we used \eqref{eq:vt} in the first step, the Leibniz rule \eqref{eq:bil_sym} in the second step, and estimate \eqref{eq:erum} in the last step.   

The bound above, together with \eqref{eq:est-rho-sm}, yields the former inequality in \eqref{eq:equi}. We are left to prove the latter. In fact, we have
\begin{equation*}
\begin{split}
\|u\|_{H^s}&=\|\sqrt{\rho}e^{\ii\theta}\|_{H^s}\lesssim_{s}\|\sqrt{\rho}\|_{H^s}\|e^{\ii\theta}\|_{L^{\infty}}+\|\sqrt{\rho}\|_{L^{\infty}}\|e^{\ii\theta}\|_{H^s}\\
&\lesssim_s \|\sqrt{\rho}\|_{H^s}+\|\sqrt{\rho}\|_{L^{\infty}}(1+\|v\|_{H^{s-1}})\lesssim_{s,m} \|(\rho,v)\|_{M^s}+\|\sqrt{\rho}\|_{L^{\infty}}\lesssim_{s,m}\|(\rho,v)\|_{M^s},
\end{split}
\end{equation*}
where we used the fractional Leibniz rule \eqref{eq:bil_sym} in the second step, \eqref{eq:tv} in the third step, estimate \eqref{eq:esum} in the fourth step, and the Sobolev embedding $H^s(\T^2)\hookrightarrow L^{\infty}(\T^2)$ in the last step. Estimate \eqref{eq:equi} is proved when $\e=1$. The case $\e\in(0,1)$ then readily follows by
\begin{gather*}
	M^{-1}\|(\rho,v)\|_{M^s}=M^{-1}\|\rho\|_{H^s}+M^{-1}\e\|\nabla\theta\|_{H^{s-1}}\leqslant\|u\|_{H^s},\\
	\|u\|_{H^s}\leqslant M\|\rho\|_{H^s}+M\e^{-1}\|\e\nabla\theta\|_{H^{s-1}}\leqslant M\e^{-1}\|(\rho,v)\|_{M^s}.
\end{gather*}
The proof is complete.
\end{proof}

\begin{proposition}\label{pr:equiv2}
Fix $s>1$, $m>0$, $\e\in(0,1]$ and $\varphi\in\T$. There exist $M:=M(s,m)>1$ and $\delta:=\delta(s,m)>0$ such that the following holds. Given $u=|u|e^{\mathrm{i}\theta}\in H^s(\T^2)$, with
	\begin{equation}\label{eq:small1e}
\|u-\sqrt{m}e^{\ii\varphi}\|_{\ell^1}\langle\|u\|_{H^1}\rangle\leqslant\delta,
	\end{equation}
 and setting $\sqrt{\rho}:=|u|$, $\Lambda:=\e\sqrt{\rho}\,\nabla\theta$, we have the estimate
	\begin{equation}\label{eq:equi2}
		M^{-1}\|(\sqrt{\rho},\Lambda)\|_{M^{s}}\leqslant \|u\|_{H^s}\leqslant M\e^{-1}\|(\sqrt{\rho},\Lambda)\|_{M^{s}}.
	\end{equation}
\end{proposition}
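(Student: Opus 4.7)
The plan is to reduce to $\e=1$ and $\varphi=0$ by a rescaling in time and by absorbing the constant phase $e^{\ii\varphi}$ into $u$ (both operations preserve all the quantities at stake). Set $w:=u-\sqrt m$. The Wiener embedding $\|w\|_{L^\infty}\le\|w\|_{\ell^1}$ combined with \eqref{eq:small1e} forces $\|w\|_{L^\infty}\le\delta$; choosing $\delta=\delta(s,m)$ small enough yields $|u|\in[\sqrt m/2,3\sqrt m/2]$, so $\sqrt\rho=|u|$ is bounded above and below and the phase $\theta=\arg u$ can be chosen in $(-\pi/4,\pi/4)$. The key identity I would use throughout is
\begin{equation*}
e^{-\ii\theta}\nabla u=\nabla\sqrt\rho+\ii\Lambda,
\end{equation*}
which neatly packages the hydrodynamic variables in terms of $u$.

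For the lower bound $\|(\sqrt\rho,\Lambda)\|_{M^s}\lesssim_m\|u\|_{H^s}$, the inequality $\|\sqrt\rho\|_{H^s}\lesssim_m\|u\|_{H^s}$ follows exactly as in the proof of Proposition \ref{pr:equiv1}, applying the fractional Leibniz rule to $\rho=u\bar u$ and then Lemma \ref{le:compo} with $h(x)=\sqrt{x}$ (valid since $\rho\ge m/4$). For $\|\Lambda\|_{H^{s-1}}$, since $\nabla u=\nabla w$, I would split
\begin{equation*}
\Lambda=\Im\bigl(e^{-\ii\theta}\nabla w\bigr)=\nabla\Im w+\Im\bigl((e^{-\ii\theta}-1)\nabla w\bigr).
\end{equation*}
The first piece contributes at most $\|w\|_{H^s}\lesssim_m\|u\|_{H^s}$. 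For the bilinear error, $e^{-\ii\theta}-1$ is a smooth function of $(w,\bar w)$ vanishing at zero, so Lemma \ref{le:compo} gives $\|e^{-\ii\theta}-1\|_{H^{s-1}}\lesssim_m\|w\|_{H^{s-1}}$ and $\|e^{-\ii\theta}-1\|_{L^\infty}\lesssim_m\|w\|_{L^\infty}$; fractional Leibniz then produces contributions of type $\|w\|_{L^\infty}\|w\|_{H^s}$, absorbed via $\|w\|_{\ell^1}\|u\|_{H^s}\le\delta\|u\|_{H^s}/\|u\|_{H^1}\lesssim_m\delta\|u\|_{H^s}$ (using $\|u\|_{H^1}\gtrsim_m 1$), plus a more delicate term involving $\|\nabla w\|_{L^\infty}$.

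For the upper bound $\|u\|_{H^s}\lesssim_m\e^{-1}\|(\sqrt\rho,\Lambda)\|_{M^s}$, Proposition \ref{pr:equiv1} (whose smallness \eqref{eq:clm} is implied by \eqref{eq:small1e}) provides $\|u\|_{H^s}\le M\e^{-1}(\|\rho\|_{H^s}+\|v\|_{H^{s-1}})$. Then $\|\rho\|_{H^s}\lesssim_m\|\sqrt\rho\|_{H^s}$ by fractional Leibniz, while $v=\Lambda/\sqrt\rho$ gives, via composition on $1/\sqrt\rho=1/\sqrt m+\widetilde g(w,\bar w)$ with $\widetilde g$ smooth and vanishing at the origin, a bound of the form $\|v\|_{H^{s-1}}\lesssim_m\|\Lambda\|_{H^{s-1}}+\|\Lambda\|_{L^\infty}\|w\|_{H^{s-1}}+\|\Lambda\|_{H^{s-1}}\|w\|_{\ell^1}$. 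The last summand is absorbed by smallness of $\delta$, leaving again a bad term involving $\|\Lambda\|_{L^\infty}$.

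The hard part is precisely these error terms $\|\nabla w\|_{L^\infty}\|w\|_{H^{s-1}}$ and $\|\Lambda\|_{L^\infty}\|w\|_{H^{s-1}}$, which cannot be tamed by the Sobolev embedding $H^s\hookrightarrow W^{1,\infty}$ in the regime $1<s\le 2$ in dimension two. This is where the hybrid smallness $\|w\|_{\ell^1}\langle\|u\|_{H^1}\rangle\le\delta$ becomes decisive: the Wiener bound yields $\|\nabla w\|_{L^\infty}\le\|\widehat{\nabla w}\|_{\ell^1}$, and via the identity $\Lambda=\Im(e^{-\ii\theta}\nabla w)$ one also controls $\|\Lambda\|_{L^\infty}$ in the same way. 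A Fourier interpolation exploiting the Wiener smallness of $w$ together with the $H^1$ control of $u$ is then used to bound such products by a small fraction of $\|u\|_{H^s}$ plus a constant multiple of $\|(\sqrt\rho,\Lambda)\|_{M^s}$, closing the estimates upon choosing $\delta=\delta(s,m)$ sufficiently small.
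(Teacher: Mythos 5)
Your set-up (reduction to $\e=1$, $\varphi=0$, the lower/upper positivity of $\sqrt\rho$, and the identity $e^{-\ii\theta}\nabla u=\nabla\sqrt\rho+\ii\Lambda$) matches the paper's, and the estimate of $\|\sqrt\rho\|_{H^s}$ is the same. However, the crucial step is handled with a tool you do not have, and the ``Fourier interpolation'' you invoke at the end cannot replace it.

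The concrete gap is that your use of the \emph{symmetric} fractional Leibniz rule \eqref{eq:bil_sym} unavoidably produces the factors $\|\nabla w\|_{L^\infty}$ and $\|\Lambda\|_{L^\infty}$. In dimension two, for $1<s\le 2$, these are not controlled by $\|u\|_{H^s}$, and they are not controlled by the hypothesis either: the bound $\|w\|_{\ell^1}\langle\|u\|_{H^1}\rangle\le\delta$ gives $\|\widehat w\|_{\ell^1}$ small but says nothing about $\sum_k|k|\,|\widehat w(k)|$. Any attempt to interpolate, e.g. splitting $\sum_{|k|\le N}+\sum_{|k|>N}$ and using Cauchy--Schwarz on the tail, requires $\sum_{|k|>N}|k|^{2-2s}<\infty$, which fails in $\Z^2$ precisely when $s\le 2$. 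So the argument you sketch does not close in the stated range $s>1$.

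What the paper does instead is use the \emph{asymmetric} fractional Leibniz rule, Lemma~\ref{le:asbi}: $\|fg\|_{H^{s-1}}\lesssim\|f\|_{L^\infty}\|g\|_{H^{s-1}}+\|f\|_{H^s}\|J^{-1}g\|_{L^\infty}$. Applied to $\overline\phi\,\nabla u$ this replaces the uncontrollable $\|\nabla u\|_{L^\infty}$ by $\|J^{-1}\nabla u\|_{L^\infty}$, and since $\langle k\rangle^{-1}|k|\le 1$ one has $\|J^{-1}\nabla u\|_{\ell^1}\le\|u-\sqrt m\|_{\ell^1}$, which \emph{is} controlled by the hypothesis. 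That, not interpolation, is the mechanism that makes the $\ell^1$ smallness usable. For the converse inequality the paper further needs (i) the Levy--Wiener composition estimate, Proposition~\ref{pr:comp-wiener}, to obtain $\|\phi-1\|_{\ell^1}\lesssim\widetilde\delta$, and (ii) a Young inequality in Lorentz spaces to bound $\|J^{-1}\bigl((\overline\phi-1)\nabla u\bigr)\|_{\ell^1}\lesssim\|\phi-1\|_{\ell^1}\|u\|_{H^1}$; this is exactly where the hybrid factor $\langle\|u\|_{H^1}\rangle$ in \eqref{eq:small1e} is spent, so that the resulting term can be absorbed by taking $\delta$ small. Your proposal lacks both the asymmetric Leibniz inequality and the Lorentz-space convolution estimate, which are not cosmetic: without them the bad boundary terms cannot be tamed for $1<s\le 2$.

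One smaller remark: for the upper bound it is cleaner, as in the paper, to bound $\|u\|_{H^s}$ via $\|\phi\overline\phi\,\nabla u\|_{H^{s-1}}$ and the identity $\overline\phi\,\nabla u=\nabla\sqrt\rho+\ii\Lambda$ directly, rather than passing through $v=\Lambda/\sqrt\rho$ and then invoking Proposition~\ref{pr:equiv1}; the detour through $v$ re-introduces $\|v\|_{H^{s-1}}$, which is not part of $\|(\sqrt\rho,\Lambda)\|_{M^s}$ and would require the very control you are trying to establish.
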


\begin{remark}
For $\delta>0$ sufficiently small, condition \eqref{eq:small1e} is more restrictive than \eqref{eq:clm}, and such smallness assumption turns out to be crucial when estimating $\|u\|_{H^s}$ in terms of the $M^s$-norm of the hydrodynamic state $(\sqrt{\rho},\Lambda)$.
\end{remark}

	\begin{proof}[Proof of Proposition \ref{pr:equiv1}]
		Arguing as before, it is sufficient to prove the thesis when $\e=1$, $\varphi=0$. Let us set $\widetilde{\delta}:=\langle\|u\|_{H^1}\rangle^{-1}\delta\leqslant\delta$. For $\delta\leqslant\frac{\sqrt{m}}{2}$, estimate \eqref{eq:emb} and the bound $\big||a|-|b|\big|\leqslant|a-b|$ give $\frac12\sqrt{m}\leqslant\sqrt{\rho}\leqslant\frac32\sqrt{m}$. The fractional Leibniz rule \eqref{eq:bil_sym} yields also $\rho:=u\overline{u}\in H^s(\T^2)$, with 
	$$\|\rho\|_{H^s}\lesssim \|u\|_{L^{\infty}}\|u\|_{H^s}\lesssim_{s,m}\|u\|_{H^s}.$$	
	Applying Lemma \ref{le:compo}, respectively with $h(x)=\sqrt{x}$ and $h(x)=x^{-1/2}$, we then obtain
	\begin{gather}
		\label{eq:estirho}	\|\sqrt{\rho}\|_{H^s}\lesssim_{s,m} \|u\|_{H^s},\\
		\label{eq:estinv}
		\|\rho^{-\frac12}\|_{H^s}\lesssim_{s,m}\|u\|_{H^s}.
	\end{gather}

Set now $\phi:=e^{\ii\theta}$. A direct computation gives the identity
	\begin{equation}\label{eq:idmade}
		\overline{\phi}\,\nabla u=\nabla\sqrt{\rho}+i\Lambda.
	\end{equation}		
	 Using \eqref{eq:estinv} and the fractional Leibniz rule \eqref{eq:bil_sym} we get
	\begin{equation}\label{eq:bopf}
		\|\phi\|_{H^s}\lesssim \|u\|_{H^s}\|\rho^{-\frac12}\|_{L^{\infty}}+\|\rho^{-\frac12}\|_{H^s}\|u\|_{L^{\infty}}\lesssim_{s,m} \|u\|_{H^s}.
	\end{equation}
	Since $\Lambda=\Im(\overline{\phi}\,\nabla u)$, the asymmetric fractional Leibniz rule \eqref{eq:asbi} then yields
	\begin{equation}\label{eq:mad}
		\|\Lambda\|_{H^{s-1}}\leqslant\|\overline{\phi}\,\nabla u\|_{H^{s-1}}\lesssim_s\|\phi\|_{L^{\infty}}\|u\|_{H^s}+\|\phi\|_{H^s}\|J^{-1}\nabla u\|_{L^{\infty}}.
	\end{equation}
	In addition, observe that
	\begin{equation}\label{eq:lin-wiener}
		\|J^{-1}\nabla u\|_{\ell^1}=\|\langle k\rangle^{-1}k\,\mathcal{F}(u-\sqrt{m})\|_{\ell^1}\lesssim \|u-\sqrt{m}\|_{\ell^1}\leqslant\widetilde{\delta}.
	\end{equation}
	Using \eqref{eq:bopf}, \eqref{eq:lin-wiener} and \eqref{eq:emb}, we deduce from \eqref{eq:mad} the bound
	$$\|\Lambda\|_{H^{s-1}}\lesssim_{s,m} \|u\|_{H^s},$$
	which together with \eqref{eq:estirho} yields the former inequality in \eqref{eq:equi}. We are left to prove the latter. To this aim, we start by observing that
	\begin{gather}
		\label{eq:small_re}\|\Re u-\sqrt{m}\|_{\ell^1}={\textstyle{\frac{1}{2}}}\|(u-\sqrt{m})+(\overline{u}-\sqrt{m})\|_{\ell^1}\leqslant\widetilde{\delta}.\\
		\label{eq:small_im}\|\Im u\|_{\ell^1}={\textstyle{\frac{1}{2}}}\|(u-\sqrt{m})-(\overline{u}-\sqrt{m})\|_{\ell^1}\leqslant\widetilde{\delta}.
	\end{gather}
	Owing to \eqref{eq:small_re}, we can apply Proposition \ref{pr:comp-wiener} with $F(z)=1/z$, $z_0=\sqrt{m}$, so that choosing $\delta$ sufficiently small we get
	\begin{equation}\label{eq:small_inre}\|(\Re\psi)^{-1}\|_{\ell^1}\lesssim_{m}1.
	\end{equation}
	Using \eqref{eq:small_im}, \eqref{eq:small_inre} and the algebra property \eqref{eq:algebra}, we then deduce
	\begin{equation}\label{eq:small_tan}
		\Big\|\frac{\Im\psi}{\Re\psi}\Big\|_{\ell^1}\lesssim_m \widetilde{\delta}.
	\end{equation}
	Since $\Re u\geqslant\frac{\sqrt{m}}{2}>0$, as follow from \eqref{eq:emb} and \eqref{eq:small_re}, we can write $$\phi=e^{i\arctan\big(\frac{\Im u}{\Re u}\big)}.$$ 
	In view of \eqref{eq:small_tan}, we can apply again Proposition \ref{pr:comp-wiener} with $F(z)=e^{i\arctan{z}}-1$, $z_0=0$, and choosing $\delta$ possibly smaller we obtain
	\begin{equation}\label{eq:small_polar}
		\|\phi-1\|_{\ell^1}\lesssim_m\widetilde{\delta}.
	\end{equation}

Next we have
	\begin{equation}\label{eq:bou_hs}
		\begin{split}
			\|u\|_{H^{s}}&\lesssim\|u\|_{L^2}+\|\phi\overline{\phi}\nabla u\|_{H^{s-1}}\\
			&\lesssim_s \|\sqrt{\rho}\|_{L^2}+\|\phi\|_{L^{\infty}}\|\overline{\phi}\,\nabla u\|_{H^{s-1}}+\|\phi\|_{H^s}\|J^{-1}(\overline{\phi}\,\nabla u)\|_{L^{\infty}}\\
			&\lesssim_{s,m} \|\sqrt{\rho}\|_{L^2}+\|\nabla\sqrt{\rho}\|_{H^{s-1}}+\|\Lambda\|_{H^{s-1}}+\|u\|_{H^s}\|J^{-1}(\overline{\phi}\,\nabla u)\|_{L^{\infty}}\\
			&\lesssim \|(\sqrt{\rho},\Lambda)\|_{M^{s}}+\|u\|_{H^s}\|J^{-1}(\overline{\phi}\,\nabla u)\|_{L^{\infty}},
		\end{split}
	\end{equation}
	where we used the asymmetric product estimate \eqref{eq:asbi} in the second step, identity \eqref{eq:idmade} together with \eqref{eq:bopf} in the third step, and the definition \eqref{eq:sob_norm} of $M^s$-norm in the last step. Moreover, in view of \eqref{eq:emb} we have
	\begin{equation}\label{eq:split_uno}
		\|J^{-1}(\overline{\phi}\,\nabla u)\|_{L^{\infty}}\leqslant \|J^{-1}\nabla u\|_{\ell^1}+ \|J^{-1}\big((\overline{\phi}-1)\,\nabla u\big)\|_{\ell^1}.
	\end{equation}
	The first term in the r.h.s.~of \eqref{eq:split_uno} is estimated as in \eqref{eq:lin-wiener}. For the latter, we have
	$$\|J^{-1}\big((\overline{\phi}-1)\nabla u\big)\|_{\ell^1}=\big\|\langle k\rangle^{-1}\big(\mathcal{F}(\overline{\phi}-1)*\widehat{\nabla u}\,\big)\big\|_{\ell^1}$$
	$$\lesssim \|\langle k\rangle^{-1}\|_{\ell^{2,\infty}}\,\|\mathcal{F}(\overline{\phi}-1)\|_{\ell^1}\,\|\widehat{\nabla u}\|_{\ell^2}\lesssim \widetilde{\delta}\|u\|_{H^1},$$
	where we used Young inequality in Lorentz spaces (see e.g.~\cite[Section 1.1]{Danchin}) in the second step, and estimate \eqref{eq:small_polar} in the last step. We then deduce
	\begin{equation}\label{eq:jdw}
		\|J^{-1}(\overline{\phi}\nabla u)\|_{L^{\infty}}\lesssim_m\widetilde{\delta}\langle\|u\|_{H^1}\rangle=\delta.
	\end{equation}
	Combining \eqref{eq:bou_hs} with \eqref{eq:jdw} we get
	\begin{equation}\label{eq:ed}\|u\|_{H^s}\lesssim_{s,m} \|(\sqrt{\rho},\Lambda)\|_{M^s}+\delta\|u\|_{H^s},
	\end{equation}
	and choosing $\delta>0$ small enough (depending on $s,m$) so that $\delta\|\psi\|_{H^s}$ can be absorbed in the l.h.s.~of \eqref{eq:ed}, we eventually obtain the desired inequality.
\end{proof}

We are now able to prove our main results.

\begin{proof}[Proof of Theorem \ref{th:main_qhd}]
We start with the case $d=2$. Let $M=M(s,m)$ as in Proposition \ref{pr:equiv1}, and set $\widetilde{\mathcal{K}}:=\e^{-1}M^2\mathcal{K}$. Let $u$ be a global, smooth solution to the NLS equation \eqref{eq:NLS_stand} such that
\begin{equation}\label{eq:tegs}
\frac{\|u(T)\|_{H^s}}{\|u(0)\|_{H^s}}\geqslant \widetilde{\mathcal{K}},
\end{equation}
for some
$$T\lesssim_m \frac{\e}{s-1}\, {\widetilde{\mathcal{K}}}^{\frac{3(1+\sigma)}{s-1}}\lesssim_{s,m}\e\Big(\frac{\mathcal{K}}{\e}\Big)^{\frac{3(1+\sigma)}{s-1}},$$
and
\begin{equation}\label{eq:clmm}
\sup_{t\in[0,T]}\inf_{\varphi\in\T}\|u(t)-\sqrt{m}e^{\ii\varphi}\|_{\ell^1}\lesssim_m\widetilde{\mathcal{K}}^{-\frac{\sigma}{s-1}}\lesssim_{m}\Big(\frac{\mathcal{K}}{\e}\Big)^{-\frac{\sigma}{s-1}},
\end{equation}
as provided by Theorem \ref{thm:growth} (in the case $k=0$).

Let $\rho:=|u|^2$. The bound $\big||a|-|b|\big|\leqslant|a-b|$, \eqref{eq:emb} and \eqref{eq:clmm} yield
\begin{equation}\label{p24}
	\begin{split}
		\sup_{t\in[0,T]}\|\rho(t)-m\|_{L^{\infty}}&\leqslant\sup_{t\in[0,T]}\|\sqrt{\rho}(t)+\sqrt{m}\|_{L^{\infty}}\|\sqrt{\rho}(t)-\sqrt{m}\|_{L^{\infty}}\\
		&\lesssim_m\sup_{t\in[0,T]}\inf_{\varphi\in\T}\|u(t)-\sqrt{m}e^{\ii\varphi}\|_{L^{\infty}}\lesssim_{m}\Big(\frac{\mathcal{K}}{\e}\Big)^{-\frac{\sigma}{s-1}},
	\end{split}
\end{equation}
which proves \eqref{close_weak_norm}. In particular, for $\mathcal{K}$ sufficiently large, we deduce that on the interval $[0,T]$ the mass density $\rho$ is smooth and non-vanishing, and we can write $u=\sqrt{\rho}\,e^{\ii\theta}$ for some smooth $\theta:[0,T]\times \T^2\to\R$. Setting $v:=\e\nabla\theta$, a direct check then shows that the pair $(\rho,v)$ is a smooth solution to system \eqref{eq:QHD} (with $\rho>0$ and $\operatorname{rot}v=0$), defined on $[0,T]\times\T^2$. For convenience of the reader, we explicitly write down the computation. Substituting $u=\sqrt{\rho}\,e^{\ii\theta}$ into the NLS equation \eqref{eq:NLS_stand}, simplifying the factor $e^{\ii\theta}$, and separating real and imaginary parts we obtain
\begin{equation}\label{eq:int-qhd}
\begin{cases}
\partial_t\sqrt{\rho}+\e\nabla\!\sqrt{\rho}\cdot\nabla\theta+\frac{\e}{2}\sqrt{\rho}\Delta\theta=0,\\
\e\sqrt{\rho}\partial_t\theta+{\textstyle{\frac{1}{2}}}\sqrt{\rho}|v|^2+\rho\sqrt{\rho}={\textstyle{\frac{\e^2}{2}}}\Delta\sqrt{\rho}.
\end{cases}
\end{equation}
Multiplying the first equation in \eqref{eq:int-qhd} by $2\sqrt{\rho}$ we get
$$\partial_t\rho+\e\nabla\rho\cdot\nabla\theta+\e\rho\Delta\theta=0,$$ 
namely the continuity equation $\partial_t\rho+\operatorname{div}(\rho v)=0$. In the second identity in \eqref{eq:int-qhd} we can divide by $\sqrt{\rho}$, taking gradients and then multiply by $\rho$, obtaining
\begin{equation}\label{eq:memo}
\rho\partial_t v+{\textstyle{\frac{1}{2}}}\rho\nabla|v|^2+\nabla P(\rho)={\textstyle{\frac{\e^2}{2}}}\rho\nabla\Big(\frac{\Delta\sqrt{\rho}}{\sqrt{\rho}}\Big).
\end{equation}
Using the continuity equation we can rewrite \eqref{eq:memo} as 
$$\partial_t(\rho v)+v\operatorname{div}(\rho v)+{\textstyle{\frac{1}{2}}}\rho\nabla|v|^2+\nabla P(\rho)={\textstyle{\frac{1}{2}}}\rho\nabla\Big(\frac{\Delta\sqrt{\rho}}{\sqrt{\rho}}\Big),$$
which in view of the identity
$$\operatorname{div}(\rho v\otimes v)=v\operatorname{div}(\rho v)+{\textstyle{\frac{1}{2}}}\rho\nabla|v|^2$$
eventually yields the momentum equation in the QHD system \eqref{eq:QHD}.

Finally, for $\mathcal{K}$ large enough, we can exploit the equivalence of norms \eqref{eq:equi}, which combined with estimate \eqref{eq:tegs} gives
$$\frac{\|(\rho,v)(T)\|_{M^s}}{\|(\rho,v)(0)\|_{M^s}}\geqslant \frac{\e}{M^2}\frac{\|u(T)\|_{H^s}}{\|u(0)\|_{H^s}}\geqslant \frac{\e}{M^2}\,\widetilde{\mathcal{K}}=\mathcal{K},$$
proving the growth of Sobolev norms \eqref{eq:growth_K}, and concluding the proof in the case $d=2$.

When $d\geqslant 3$, we proceed as follows. Let $(\widetilde{\rho},\widetilde{v})$ be a smooth solution to the QHD system \eqref{eq:QHD}, defined on $[0,T]\times\T^2$, and satisfying estimates \eqref{eq:growth_K}-\eqref{close_weak_norm}. Let us set
\begin{equation}\label{eq:embed}
\rho(t,x):=\widetilde{\rho}(t,x_1,x_2),\; v(t,x):=(\widetilde{v}(t,x_1,x_2),0,\ldots,0),\quad \,x\in\T^d,
\end{equation}
which defines a smooth solution $(\rho,v)$ to \eqref{eq:QHD} on $[0,T]\times \T^d$. Since $\|(\rho,v)\|_{M^s(\T^d)}=\|(\widetilde{\rho},\widetilde{v})\|_{M^s(\T^2)}$ and $\|\rho-m\|_{L^{\infty}(\T^d)}=\|\widetilde{\rho}-m\|_{L^{\infty}(\T^2)}$, the pair $(\rho,v)$ satisfies the desired estimates \eqref{eq:growth_K}-\eqref{close_weak_norm}.
\end{proof}

\begin{proof}[Proof of Theorem \ref{th:main_cascade}]
	As before, it is sufficient to consider the case $d=2$, the solution in higher dimension being then given by \eqref{eq:embed}. 
	
	Let $M=M(s,m)$ and $\delta_*=\delta(s,m)$ as in Proposition \ref{pr:equiv2}, and set $\widetilde{\mathcal{K}}:=\e^{-1}M^2\mathcal{K}$. Let $u$ be a smooth solution to the NLS equation \eqref{eq:NLS_stand} such that
	\begin{equation}\label{fin:uno} \frac{\|u(T)\|_{H^s}}{\|u(0)\|_{H^s}}\geqslant \widetilde{\mathcal{K}}
		\end{equation}
	for some
	$$ T\lesssim_{m,\delta_*} \e^{-9}\,e^{\widetilde{\mathcal{K}}^{\frac{c}{s-1}}}\lesssim_{s,m} \e^{-9} e^{(\e^{-1}\mathcal{K})^{\frac{c}{s-1}}},$$
	and 
	\begin{gather}
	\label{fin:due}	\sup_{t\in[0,T]}\inf_{\varphi\in\T}\|u(t)-\sqrt{m}e^{\ii\varphi}\|_{\ell^1}\lesssim_m\widetilde{\mathcal{K}}^{-\frac{\sigma}{s-1}}\lesssim_{m}\Big(\frac{\mathcal{K}}{\e}\Big)^{-\frac{\sigma}{s-1}},\\
\label{fin:tre}\sup_{t\in [0, T]} \,\left(\,\inf_{\varphi\in \T} \| u(t)-\sqrt{m}\,e^{\mathrm{i} \varphi} \|_{\ell^1}\right)\,\, \| u(t)\|_{H^1}\leqslant \delta_{*},
\end{gather}
as provided by Theorem \ref{thm:growth}.
	
Using \eqref{fin:due}, and arguing as in the proof of Theorem \ref{th:main_qhd}, we deduce that the Madelung transform $(\rho,v)$ of $u$ defines a smooth solution to the QHD system on $[0,T]\times\T^2$, satisfying estimate \eqref{close_weak_norm_exp}. Moreover, \eqref{fin:due} and \eqref{fin:tre} allow to exploit the equivalence of norms \eqref{eq:equi2}, which in view of \eqref{fin:uno} gives
	$$\frac{\|(\sqrt{\rho},\Lambda)(T)\|_{M^s}}{\|(\sqrt{\rho},\Lambda)(0)\|_{M^s}}\geqslant \frac{\e}{M^2}\frac{\|u(T)\|_{H^s}}{\|u(0)\|_{H^s}}\geqslant \frac{\e}{M^2}\widetilde{\mathcal{K}}=\mathcal{K},$$
which proves \eqref{eq:growth_K_exp} and concludes the proof. 
\end{proof}

\section{Growth of Sobolev norms for the NLS equation}\label{sec:growth}
In this section we prove Theorem \ref{thm:growth} on the Sobolev norms inflation for solutions to the Schr\"odinger equation \eqref{eq:NLS_stand}. Considering as new time variable $\tau=\frac{\e}{2} t$, and renaming it again $t$, we can equivalently consider the defocusing cubic NLS equation
\begin{equation}\label{cubicNLS}
	\mathrm{i} \partial_t u=-\Delta u+2 \e^{-2} |u|^2 u \qquad u=u(t, x),\quad t\in \R, \quad x\in \T^2.
\end{equation}
As is well known, the above equation possesses a Hamiltonian structure given by
\begin{equation}\label{cubicNLSham}
	H(u, \bar{u})= \int_{\T^2} |\nabla u|^2 dx+ \e^{-2} \int_{\T^2} |u|^4\,dx.
\end{equation}
In particular, equation \eqref{cubicNLS} can be rewritten as the following Hamiltonian system
\[
\begin{cases}
	\mathrm{i}\partial_t u=\nabla_{\overline{u}} H(u, \bar{u}),\\
	-\mathrm{i}\partial_t \bar{u}=\nabla_{{u}} H(u, \bar{u}),
\end{cases} 
\] 
where $\nabla_{\overline{u}}=(\partial_{\Re(u)}+\mathrm{i}\partial_{\Im(u)})/2$, $\nabla_{{u}}=(\partial_{\Re(u)}-\mathrm{i}\partial_{\Im(u)})/2$
are the Wirtinger gradients.

The solution to equation \eqref{cubicNLS} with initial datum $\mathtt{p}_{m, k}(0, x)=\sqrt{m} \,e^{\mathrm{i} k\cdot x}$\footnote{Thanks to the phase translation invariance we can assume the amplitude of the plane wave to be real and positive.}, $m>0$, $k\in \Z^2$, is the \emph{plane wave}
\begin{equation}\label{planek}
	\mathtt{p}_{m, k}(t, x)=\sqrt{m}\,e^{\mathrm{i} (k\cdot x-\Omega(k) t)}, \qquad \Omega(k)=|k|^2+2 \e^{-2} m\,.
\end{equation}
For the sake of clarity, we rewrite the statement of Theorem \ref{thm:growth}, taking into account the extra $\e$ factor coming from the time-scaling $t\to\frac{\e}{2}t$ used to derive equation \eqref{cubicNLS}\footnote{Comparing \eqref{planek} with \eqref{eq:plane_wave_original}, we note also that the plane wave is modified according to the time rescaling.}.


\begin{theorem}\label{thm:weak}
Fix $m,\sigma >0$, $\e\in(0,1]$, $s>1$, $k\in\Z^2$. For every $\mathcal{K}>0$ large enough, there exists a global, smooth solution $u(t)$ of the cubic NLS equation \eqref{cubicNLS} on $\T^2$ such that
	\begin{gather}
		\label{bound:growth}
		\frac{\|u(T) \|_{H^s}}{\| u(0) \|_{H^s}}\geqslant \mathcal{K}  \qquad \mathrm{for\,\,\,\,\,some} \qquad 0<T \lesssim_m \frac{\e^2}{s-1}\, \mathcal{K}^{\frac{3(1+\sigma)}{s-1}},\\
\label{bound:growth2}
		\sup_{t\in [0, T]} \,\,\inf_{\varphi\in \T} \| u(t)-\mathtt{p}_{m, k}(0)\,e^{\mathrm{i} \varphi} \|_{\ell^1}\lesssim_m \mathcal{K}^{-\frac{\sigma}{s-1}}.
	\end{gather}
	Moreover, given $\delta_*>0$, there exists a suitable universal constant $c>0$ and a global, smooth solution $u(t)$ to \eqref{cubicNLS}  satisfying
	\begin{equation}\label{bound:growth_exp}
		\frac{\|u(T) \|_{H^s}}{\| u(0) \|_{H^s}}\geqslant \mathcal{K}  \qquad \mathrm{for\,\,\,\,\,some} \qquad 0<T\lesssim_{m,\delta_*} {\e^{-8}}\,e^{\mathcal{K}^{\frac{c}{s-1}}},
	\end{equation}
the bound \eqref{bound:growth2}, and the additional estimate
	\begin{equation}\label{lastbound}
		\sup_{t\in [0, T]} \,\left(\,\inf_{\varphi\in \T} \| u(t)-\mathtt{p}_{m,k}(0)\,e^{\mathrm{i} \varphi} \|_{\ell^1}\right)\,\, \| u(t)\|_{H^1}\leqslant \delta_*.
	\end{equation}
	
\end{theorem}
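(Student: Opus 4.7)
The strategy follows the Colliander--Keel--Staffilani--Takaoka--Tao (CKSTT) paradigm adapted to perturbations of plane waves, refined along the lines of \cite{Giu} to obtain polynomial upper bounds on the growth time. First I would exploit the Galilean invariance of \eqref{cubicNLS} to reduce the proof to the case $k=0$, where the plane wave is constant in $x$. Writing $u(t,x)=e^{-\ii\theta(t)}(\sqrt{m}+z(t,x))$ for a suitable time-dependent gauge $\theta(t)$ (chosen to absorb the constant part of $|u|^2u$), I would derive an equation for the perturbation $z$. In terms of the complex symplectic coordinates $\{z_j,\bar z_j\}_{j\in\Z^2}$, the Hamiltonian of $z$ decomposes as
\begin{equation*}
\widetilde H \;=\; H_2 \,+\, H_3 \,+\, H_4,
\end{equation*}
where $H_2$ is quadratic with a Schr\"odinger-type dispersion modified by a mass term from $m$, $H_3$ is the \emph{non-homogeneous} cubic part of size $O(\sqrt{m}\,\e^{-2})$ coming from mixed products $\sqrt{m}\,z\bar z z$ etc., and $H_4=\e^{-2}\int|z|^4\,dx$ is the genuine quartic Hamiltonian.

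\smallskip

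The next step is a partial Birkhoff normal form, which is the main technical novelty. Standard Birkhoff normal form deals with polynomial Hamiltonians homogeneous in $z$; here one must handle the non-homogeneous $H_3$, which mixes contributions of different homogeneity. I would construct a symplectic transformation $\Phi=\mathrm{id}+O(z)$, generated by a generating Hamiltonian $\chi=\chi_3+\chi_4$, such that after conjugation all non-resonant monomials in $H_3$ and $H_4$ are removed, leaving the effective Hamiltonian in the form
\begin{equation*}
\widetilde H\circ\Phi \;=\; H_2 \,+\, Z_{\mathrm{res}} \,+\, R,
\end{equation*}
with $Z_{\mathrm{res}}$ a resonant polynomial of total degree $\le 4$ in $z,\bar z$ and $R$ a remainder of higher homogeneity (or of sufficiently small size in suitable Fourier norms). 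The resolution of the homological equations requires small-divisor estimates for the frequencies $\omega_j=|j|^2+2\e^{-2}m$; since the mass shift is spatially constant, the relevant resonance relations are identical to those of the unperturbed cubic NLS (the CKSTT resonance manifold), and the non-resonant portion can be normalized using the standard combinatorial lower bounds for $\sum_i\pm\omega_{j_i}$. I would keep careful track of the constants in $\e$ so that estimates on $\chi$ and $R$ are uniform as $\e\to 0$.

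\smallskip

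Once the effective Hamiltonian is in normal form, I would isolate a finite \emph{$\Lambda$-set} $\Lambda\subset\Z^2$ (organized into $N$ ``generations'') of CKSTT type, on which the resonant quartic dynamics reduces to the toy model of \cite{CKSTT}. I would then construct an initial datum supported essentially on the first generation of $\Lambda$ (plus the plane wave background) and show, via the shadowing/stability arguments of \cite{Giu,GuardiaHHMP19}, that the true solution of \eqref{cubicNLS} remains $\ell^1$-close to the plane wave along an orbit that follows the heteroclinic chain transferring mass from low to high generations. The growth factor in the $H^s$ norm is then produced by the spread of generations (choosing $N$ geometrically large), while the polynomial time bound \eqref{bound:growth} comes from the quartic time-scale of the toy model dynamics combined with the scaling of the amplitude in terms of $\cK$, optimized in the parameter $\sigma$ as in \cite{Giu}. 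The $\ell^1$ smallness \eqref{bound:growth2} follows since the total $\ell^1$ mass of $z$ on $\Lambda$ is tuned to be $O(\cK^{-\sigma/(s-1)})$. For the exponential variant \eqref{bound:growth_exp}--\eqref{lastbound} I would instead choose the perturbation amplitude so that $\|z\|_{\ell^1}\|u\|_{H^1}\le\delta_*$ is preserved throughout: this forces a smaller amplitude and consequently a slower toy-model time, producing the exponential bound as in the analysis of \cite{GuardiaHHMP19,GuardiaK12}. I expect the main obstacle to lie in step two: controlling the non-homogeneous cubic remainder after normalization in a norm (essentially $\ell^1$) strong enough to ensure both absence of vacuum \emph{and} the equivalence of norms needed to apply Propositions \ref{pr:equiv1}--\ref{pr:equiv2}, all while keeping the dependence on $\e$ explicit.
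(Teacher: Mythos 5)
Your plan correctly identifies the CKSTT/Guardia architecture (reduction to $k=0$, phase reduction of the plane wave, Birkhoff normal form, $\Lambda$-set, toy model, shadowing, tuning of the amplitude), and this is indeed the skeleton of the paper's argument. However, several of the steps you sketch would fail as stated, and the three ideas that make the theorem actually go through are missing.

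First, after the symplectic reduction to the mass level, the quadratic part of the Hamiltonian is \emph{not} diagonal in Fourier: it contains $z_n z_{-n}$ pairing terms of size $m\e^{-2}$, and before any normal form one must perform a Bogoliubov-type symplectic diagonalization $S$ (Proposition \ref{prop:S}). The resulting linear frequencies are $\omega(n)=\sqrt{|n|^4+4\e^{-2}m|n|^2}$, not $|n|^2+2\e^{-2}m$. Consequently your claim that ``the relevant resonance relations are identical to those of the unperturbed cubic NLS'' is false: rectangles in $\Z^2$ are \emph{not} exact four-wave resonances for $\omega(n)$. The paper resolves this by dilating $\Lambda$ by a large parameter $q$ (restriction to the sub-lattice $q\Z^2$), so that $\omega(n)-|n|^2=2\e^{-2}m+O(\e^{-4}|n|^{-2})$ and the rectangle resonances are violated only by $O(\e^{-4}q^{-2})$ (Lemma \ref{lem:U0}), while the genuinely non-resonant interactions obey a lower bound $\gtrsim q^2$ (Lemmas \ref{lem:smalldiv}, \ref{lem:smalldiv2}). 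This $q$-scaling, together with the fact that $S$ is identity plus a $2$-smoothing operator, is what controls the remainder of the normal form; without it the remainder is not small.

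Second, you propose to normalize by a polynomial generator $\chi=\chi_3+\chi_4$ removing ``all non-resonant monomials in $H_3$ and $H_4$''. This is exactly the degree-by-degree procedure that the paper explicitly avoids, because Taylor expanding the cubic term $K=f(\|z\|_{L^2}^2)\,G^{(3)}$ produces an infinite hierarchy of non-perturbative corrections that destroy the polynomial time estimate \eqref{bound:growth}. The crucial structural observation (Lemma \ref{lem:stimeBNF}) is that Hamiltonians of the form $f(\|w\|_{L^2}^2)\mathcal{F}^{(3)}$ are closed under the adjoint action $\{\mathcal{H}^{(2)},\cdot\}$, so $\mathcal{K}_1$ can be eliminated in a \emph{single} step by a generator of the same non-polynomial form; the remainder is then of cubic size times the small factor $\delta^2=\e^{-2}q^{-2}$, which can be tuned via $q$. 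Only a partial normalization (on the $\mathcal{A}_{N,\le 1}$ interaction set) is performed, so the homological equations only ever need divisors involving at most one mode outside $\Lambda$, which is what makes the lower bounds in Lemmas \ref{lem:smalldiv}-\ref{lem:smalldiv2} available.

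Third, the dependence on $\e$ is not uniform, and cannot be. Estimates on the generators scale like $\delta^2=\e^{-2}q^{-2}$ and the remainder like $\e^{-2}\delta^2$, so one must choose $q\gtrsim\e^{-1}\exp(2^{(2+\sigma)\mathtt{N}})$; the toy-model time then rescales by $\e^2\lambda^2$, giving the explicit $\e^2$ factor in \eqref{bound:growth} and the $\e^{-8}$ factor in \eqref{bound:growth_exp}. Your proposal to keep everything ``uniform as $\e\to 0$'' would not reproduce these constants and would obscure the compatibility constraints between $q$, $\lambda$ and $\mathtt{N}$ that are checked at the end of the proof.
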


The rest of this section is devoted to the proof of Theorem \ref{thm:weak}. For convenience of the reader we explain the main strategy and ideas of the proof.

\medskip

{\noindent\textbf{Idea and plan of the proof.} Preliminary, we show that we can reduce to the case of plane waves with Fourier support localized at the zero-th mode. Then, we perform a symplectic reduction of the plane wave to a fixed point. Since we are considering a defocusing NLS equation this equilibrium turned out to be elliptic. This is a suitable setting to apply Birkhoff normal form methods. More precisely, these methods are used to derive an effective finite dimensional model which possesses orbits that move energy to high modes. The solutions of the cubic NLS equation undergoing the prescribed growth of norms are approximations (in the $\ell^1$ norm) of such orbits.

The reduction of the plane wave to a point is obtained by restriction to a certain mass level, which for NLS corresponds to fix the $L^2$ norm of the wave-function, and the passage to suitable coordinates \eqref{coord}. The restricted system is still Hamiltonian with Hamiltonian given in Lemma \ref{lem:hampw}. 

In order to apply a normal form procedure it is useful to finding coordinates in which the linearized problem at the elliptic equilibrium is diagonal. We show that, as a map of the phase space, the diagonalizing change of variables has a particular pseudo differential structure. More precisely, we show that this map is identity plus a smoothing operator. This is one of the key ingredients of the proof and we come back on this later.

Since the aforementioned change of coordinates is symplectic, the Hamiltonian structure is preserved and the new Hamiltonian $\mathcal{H}$ is given in \eqref{HamCal}. We observe that such Hamiltonian is an analytic function with a full jet, because of the presence of the analytic function $\alpha$ in \eqref{def:alpha}. Usually, one would like to normalize the first orders, hence dealing with homogenous Hamiltonian terms of increasing degree at each step of the normal form procedure. This procedure turns out to be not optimal to obtain the improved (polynomial) time estimates \eqref{bound:growth}, hence we need a more refined argument.

 We normalize/eliminate terms in the Hamiltonian which are not homogeneous. This is possible thanks to the special form of the Hamiltonian. The critical term is represented by $\mathcal{K}_1$ in \eqref{calK}. In Lemma \ref{lem:stimeBNF} we prove that this kind of Hamiltonians are closed under the adjoint action $\{ \mathcal{H}^{(2)}, \cdot \}$ and hence they can be normalized/eliminated by constructing changes of coordinates as time-one flow maps of Hamiltonians of the same form. Notably, only $3$-wave resonances are involved in this procedure. 
The main goal of the normal form procedure is to highlight the fact that the effective Hamiltonian terms of degree three and four are close (in some sense) to the resonant Hamiltonian of degree four of the cubic NLS equation, from which one can derive the Toy model system of \cite{CKSTT}.
We achieve our aim by:
\begin{enumerate}
	\item a partial elimination of the Hamiltonian terms of order three.
	\item a partial normalization the Hamiltonian of degree four.
\end{enumerate}
To obtain the polynomial upper bounds \eqref{bound:growth} on the time $T$ one needs good lower bounds for $3$-wave and $4$-wave non resonant interactions. These bounds are needed to estimate the normalizing changes of coordinates and control the size of the remainder of the normal form procedure. These are given in Lemmata \ref{lem:smalldiv}, \ref{lem:smalldiv2}. 
These results are consequence of the properties of the $\Lambda$ set (we refer to Section \ref{sec:Lambdaset} for a detailed discussion on $\Lambda$), an appropriate scaling of this set, by a parameter $q\gg 1$, and the fact that we only need to {partially} normalize the Hamiltonian. These are in fact the main ideas borrowed from \cite{Giu}.
Since the above mentioned bounds depend in a nice way on the scaling parameter $q$, we are able to modulate the size of the remainder of the Birkhoff normal form. 

Actually, one needs also to show that certain Hamiltonian terms, which are not touched by the partial normal form procedure, generate small vector fields and hence they can be directly considered as perturbative terms. Here we use the fact that the abovementioned diagonalizing change of coordinates is identity plus smoothing remainder (see the estimate for $\mathcal{R}_*$ in the proof of Proposition \ref{prop:wbnf}). 

In Birkhoff and rotating coordinates the Hamiltonian has the form
\[
\mathbf{H}=\mathcal{N}+R(t)
\]
where $\mathcal{N}$ is in \eqref{def:N} and $R(t)$ is a time dependent Hamiltonian. The Hamiltonian $\mathcal{N}$ restricted to a suitable finite dimensional subspace $\mathcal{U}_{\Lambda}$ has the same dynamics of the Toy model of \cite{CKSTT} (this is proved in Section \ref{sec:TM}), in particular it possesses an orbit transferring energy to higher modes. Then, most of the remaining part of the proof consists in showing that there are solutions of the full Hamiltonian $\mathbf{H}$ which shadows, in the weak $\ell^1$-norm, the energy cascade orbit of $\mathcal{N}$ (see Section \ref{sec:approx}). Such solutions display the growth of Sobolev norms, namely they satisfy an inequality as \eqref{eq:grka}. 
It remains to come back to the original coordinates and undo the symplectic reduction of the plane wave. Thus, it is required to check that the $H^s$ norms in different coordinates are all equivalent.

Finally, our method is robust enough to construct weakly turbulent solutions satisfying the additional smallness assumption \eqref{waa}, at the cost of passing from polynomial to exponential upper bounds \eqref{bound:growth_exp} for the time at which the prescribed growth is achieved.}

\medskip

\noindent\textbf{Reduction to the case $k=0$.} First we show that it is enough to prove Theorem \ref{thm:weak} in the case $k=0\in \Z^2$.
Using the Fourier representation $u=\sum_{n\in\Z^2} u_n\,e^{\mathrm{i} n \cdot x}$ we can write equation \eqref{cubicNLS} as the following infinite dimensional ODEs system
\[
\begin{cases}
	\mathrm{i} \dot{u}_n=\partial_{\overline{u_n}} H (u, \overline{u})= |n|^2 u_n+2 \e^{-2} \sum_{n_1-n_2+n_3=n} u_{n_1} \overline{u_{n_2}} u_{n_3}\,, \\
	-\mathrm{i} \dot{\overline{u_n}}=\partial_{u_n} H (u, \overline{u})= |n|^2 \overline{u_n}+2 \e^{-2} \sum_{n_1-n_2+n_3=-n} \overline{u_{n_1}} {u_{n_2}} \overline{u_{n_3}}\,.
\end{cases}\qquad n\in \Z^2,
\]
By setting
\begin{equation}\label{psiu}
	\psi_n=u_{n+k}\,e^{\mathrm{i}  (|k|^2+2 k\cdot n) t} \qquad n\in \Z^2,
\end{equation}
the equation for the $\psi_n$'s reads as (we omit the equation for the complex conjugate)
\begin{equation}\label{eq:psi}
	\mathrm{i} \dot{\psi}_n= |n|^2 \psi_n+2 \e^{-2} \sum_{\substack{n_1-n_2+n_3=n}} \psi_{n_1} \overline{\psi_{n_2}} \psi_{n_3}\, \qquad n\in \Z^2.
\end{equation}

We observe that $\psi(t)=\sum_{n\in \Z^2} \psi_n(t)\,e^{\mathrm{i} n\cdot x}$ is a solution of \eqref{cubicNLS} which is localized on the mode $0$ if $u(t)$ is localized on the mode $k$. Hence, from now on, we restrict our analysis to the case of a plane wave \eqref{planek} with $k=0$.

\smallskip

\noindent\textbf{Restriction to a dilated sub-lattice.}  Let $q>1$ be an integer. We consider the subspace
\begin{equation}\label{Sq}
	S_q=\{ \psi\in L^2(\T^2) : \psi(x_1, x_2)=\psi(x_1+2\pi q^{-1}, x_2+2\pi q^{-1}) \},
\end{equation}
that is the set of functions which are $2\pi q^{-1}$ periodic in each variable. We observe that if $\psi\in S_q$ then its Fourier coefficients $\psi_n$ satisfies
\[
\psi_n=0 \qquad \mathrm{if}\,\,n\notin q \Z^2,
\]
namely $\psi$ is Fourier supported on the dilated sub-lattice $q \Z^2$ of $\Z^2$.

It is easy to see that such subspaces are left invariant by the flow of the equation \eqref{eq:psi}, because of the $x$-traslation invariance. 

At some point we will restrict to a subspace as $S_q$ in order to avoid certain $3$-wave resonant interactions. 
Since we will perform several changes of coordinates it is useful to observe that the transformations of the phase space which preserve momentum (equivalently, the $x$-translation invariance) leave invariant the subspaces $S_q$. In particular, we will consider symplectic changes of coordinates $\varphi$ generated as time-one flow map of some Hamiltonian $F$ which Poisson commutes with the momentum Hamiltonians 
\begin{equation}\label{momenta}
	\mathcal{M}_k(\psi, \bar{\psi})=\mathrm{i} \int_{\T^2}  \psi_{x_k}\, \,\overline{\psi}\,dx, \qquad k=1, 2.
\end{equation}
The fact that $\{  F, \mathcal{M}_k \}=0$ for $k=1, 2$ (and $\{ \mathcal{M}_1, \mathcal{M}_2 \}=0$) implies that
\begin{equation}\label{flowscomm}
	\varphi\circ \Phi_{\mathcal{M}_k}^{\tau}\circ \Phi^{s}_{\mathcal{M}_j}=\Phi_{\mathcal{M}_k}^{\tau}\circ \Phi^{s}_{\mathcal{M}_j}\circ \varphi, \qquad \forall k=1, 2,\,\,\forall \tau, s\in \R,
\end{equation}
where $\Phi_{\mathcal{M}_1}^{\tau}(\psi(x))=\psi(x_1+\tau, x_2)$, $\Phi_{\mathcal{M}_2}^{\tau}(\psi(x))=\psi(x_1, x_2+\tau)$ are the flows of the momentum Hamiltonians. 

Therefore, if $\psi\in S_q$ then
\[
\varphi(\psi (\cdot + 2\pi q^{-1}))=\Phi_{\mathcal{M}_1}^{2\pi q^{-1}} \Phi_{\mathcal{M}_2}^{2\pi q^{-1}} (\varphi(\psi(\cdot)))\stackrel{\eqref{flowscomm}}{=}\varphi(\Phi_{\mathcal{M}_1}^{2\pi q^{-1}} \Phi_{\mathcal{M}_2}^{2\pi q^{-1}} \psi(\cdot))\stackrel{\psi\in S_q}{=}\varphi(\psi(\cdot))\,.
\]
This shows that $\varphi$ must preserve the subspaces $S_q$.

\smallskip

\noindent\textbf{Symplectic reduction of the plane wave.}
The equation \eqref{eq:psi} possesses the Hamiltonian function
\begin{equation}\label{hamNLS}
	H(\psi, \overline{\psi})=\int_{\T^2}  |\nabla \psi|^2\,dx+\e^{-2} \int_{\T^2} |\psi|^4\,dx
\end{equation}
associated to the symplectic structure $-\mathrm{i} d\psi\wedge d \overline{\psi}$.

From now on we denote by
\[
H^s=H^s(\T^2; \mathbb{C})
\]
and we will frequently use the identification with the corresponding sequence space that we denote by the same symbol, hence
\begin{equation}\label{def:Hs}
	H^s=\left\{ \psi \colon \Z^2 \to \mathbb{C} : \| \psi\|_s^2=\sum_{n\in \Z^2} |\psi_n|^2\,\langle n \rangle^{2s}<+\infty \right\}.
\end{equation}
We will consider the Hamiltonian functions defined on the phase space
\begin{equation}\label{Rs}
	\mathbf{R}_s:=\{ (\psi^+, \psi^-)\in H^s\times H^s : \psi^+=\overline{\psi^-} \}, \qquad s> 1.
\end{equation}

We introduce a set of coordinates which allows to symplectically reduce the plane wave to a fixed point. We consider the set of variables $(\alpha, \theta, z, \bar{z})$ defined by
\begin{equation}\label{coord}
	\psi_0=\alpha \,e^{\mathrm{i} \theta}, \qquad  \psi-\psi_0=z\,e^{\mathrm{i} \theta},
\end{equation}
where $\alpha\in \R$, $\theta\in \T$. Without loss of generality we can assume that $\alpha>0$. Thus, we can symplectically reduce the plane wave to a fixed point by restricting to the mass level $\{ \| \psi\|_{L^2}^2=4 \pi^2 m\}$, where $m$ is the square of the amplitude of the plane wave. This is equivalent to fix
\begin{equation}\label{def:alpha}
	{2\pi}\alpha={2\pi}\alpha(z)=\sqrt{4\pi^2 {m}-\| z \|_{L^2}^2}.
\end{equation}
We shall check that we remain in the domain of validity of the change of coordinates \eqref{coord}, namely that the solutions that we construct meet the condition $\| z \|_{L^2}^2<4\pi^2 m$ over their lifespan.

As it has been observed in \cite{Faou}, there is a factorization of the dynamics of the variables $(\alpha, \theta)$ and $(z, \bar{z})$ on the mass level. In particular, the evolution of $\alpha$ is determined by \eqref{def:alpha}, while the equation for $\theta$ is given by
\begin{equation}\label{eq:theta}
	\dot{\theta}=-\frac{ \e^{-2}}{2\pi^2 \alpha}\,\Re\left( \sum_{n_1-n_2+n_3=0} z_{n_1}\,\overline{z_{n_2}}\,z_{n_3}  \right),
\end{equation}
where we adopted the convention $z_0=\overline{z_0}=\alpha$.
The system of equations for $(z, \bar{z})$ still possesses a Hamiltonian structure.

%
%


\begin{lemma}\label{lem:hampw}
	The restricted Hamiltonian \eqref{hamNLS} in the coordinates $(z, \bar{z})$ (see \eqref{coord}), with respect to the symplectic form $-\mathrm{i} d z \wedge d \overline{z}$, reads as
	\begin{equation}\label{ham:pw}
		H(z, \bar{z})=H^{(2)}(z, \bar{z})+\e^{-2}\,H^{(4)}(z, \bar{z})+\e^{-2}\,K(z, \bar{z})
	\end{equation}
	where
	\begin{equation}\label{H}
		\begin{aligned}
			H^{(2)}(z, \bar{z})&= \int_{\T^2} |\nabla z|^2 dx+2 {} m \e^{-2}  \| z \|_{L^2}^2+ {2} m  \e^{-2} \int_{\T^2} \Re(z^2)\,dx,\\
			H^{(4)}(z, \bar{z})&=-{4\pi^2}\sum_{n\neq 0} |z_n|^4+{4\pi^2}\sum_{\substack{n_1-n_2+n_3-n_4=0,\\ n_1\neq n_2, n_4}} z_{n_1} \overline{z_{n_2}} z_{n_3} \overline{z_{n_4}}\\
			& + \left( 8\pi^2-\frac{3}{4\pi^2} \right){} \| z \|_{L^2}^4-\frac{1}{2\pi^2} \| z \|_{L^2}^2 \int_{\T^2} \Re(z^2)\,dx,\\
			K(z, \bar{z})&=2{} \alpha  \int_{\T^2} (z+\bar{z}) |z|^2\,dx.
		\end{aligned}
	\end{equation}
	
\end{lemma}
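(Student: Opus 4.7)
The plan is to prove the lemma by a direct computation in two steps: first verify that \eqref{coord} is a symplectic change of coordinates onto the reduced phase space with form $-\ii\, dz \wedge d\bar z$, and then substitute \eqref{coord} into \eqref{hamNLS} and reorganize the resulting sums. No analytic subtlety enters—the work is essentially algebraic—but careful bookkeeping of the factor $4\pi^2 = \operatorname{vol}(\T^2)$ arising from Parseval and from integrating constants is essential.

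For the symplectic part, writing $\psi_n = z_n e^{\ii\theta}$ for $n \neq 0$ and $\psi_0 = \alpha e^{\ii\theta}$ with $\alpha > 0$, a direct computation gives $-\ii\, d\psi_n \wedge d\bar\psi_n = -\ii\, dz_n \wedge d\bar z_n - d|z_n|^2 \wedge d\theta$ for $n \neq 0$ and $-\ii\, d\psi_0 \wedge d\bar\psi_0 = -d\alpha^2 \wedge d\theta$. Summing over $n$ and using the mass constraint $4\pi^2 \alpha^2 + \|z\|_{L^2}^2 = 4\pi^2 m$ in the differentiated form $d\alpha^2 + \sum_{n\neq 0} d|z_n|^2 = 0$, the $d\theta$ cross-terms cancel and the restricted symplectic form reduces to $-\ii\, dz \wedge d\bar z$, as claimed. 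The factorization of the $(\alpha,\theta)$-dynamics from the $(z,\bar z)$-dynamics expressed by \eqref{def:alpha}--\eqref{eq:theta} then follows from the $U(1)$ phase-invariance of $H$, which makes $\theta$ cyclic for the restricted Hamiltonian.

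For the Hamiltonian itself, the kinetic term is immediate since $e^{\ii\theta}$ is spatially constant: $|\nabla \psi|^2 = |\nabla z|^2$. For the quartic term, using $\alpha \in \R$ and the zero-mean condition $\int_{\T^2} z\, dx = 0$ (forced by $z_0 = 0$) to kill the odd-in-$\alpha$ cross terms, I would expand $|\psi|^4 = \bigl(\alpha^2 + \alpha(z+\bar z) + |z|^2\bigr)^2$ to obtain
\[
\textstyle \int_{\T^2} |\psi|^4\, dx = 4\pi^2 \alpha^4 + 2\alpha^2 \int \Re(z^2)\, dx + 4\alpha^2 \|z\|_{L^2}^2 + \int |z|^4\, dx + 2\alpha \int (z+\bar z)|z|^2\, dx.
\]
Then I would substitute $\alpha^2 = m - \|z\|_{L^2}^2/(4\pi^2)$ and $\alpha^4 = m^2 - m\|z\|_{L^2}^2/(2\pi^2) + \|z\|_{L^2}^4/(16\pi^4)$, drop the irrelevant additive constant $4\pi^2 m^2 \e^{-2}$, and expand $\int |z|^4\, dx = 4\pi^2 \sum_{n_1 - n_2 + n_3 - n_4 = 0} z_{n_1}\bar z_{n_2} z_{n_3}\bar z_{n_4}$ via Parseval. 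Splitting off the "trivially resonant" sub-sum $\{n_1 = n_2\} \cup \{n_1 = n_4\}$, which contributes $4\pi^2(2\|z\|_{\ell^2}^4 - \sum_{n\neq 0}|z_n|^4)$, produces the $-4\pi^2 \sum_{n \neq 0} |z_n|^4$ term and the constrained non-resonant quartic sum of $H^{(4)}$; the remaining $\|z\|_{L^2}^4$ and $\|z\|_{L^2}^2 \int \Re(z^2)\, dx$ contributions combine with those from the $\alpha^2,\alpha^4$ expansions to produce the mixed rational coefficients of $H^{(4)}$, while the cubic-in-$z$ residual is precisely $\e^{-2} K$. The only subtle structural point is that $K$ is not polynomial in $z$, since $\alpha = \alpha(z)$ depends analytically but not polynomially on $\|z\|_{L^2}^2$ through \eqref{def:alpha}; this is the origin of the full-jet analytic character of the Hamiltonian $\mathcal{H}$ exploited in the partial Birkhoff normal form step that follows.
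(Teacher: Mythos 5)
Your proof is correct and follows the same direct computation as the paper: expand $\int_{\T^2}|\alpha+z|^4\,dx$ using $\int_{\T^2}z\,dx=0$, substitute $\alpha^2=m-\|z\|_{L^2}^2/(4\pi^2)$ and drop the additive constant, and then apply the Parseval identity for $\int_{\T^2}|z|^4\,dx$ with the split into the trivially resonant sub-sum $\{n_1=n_2\}\cup\{n_1=n_4\}$ and its complement, which produces the Lemma's $H^{(4)}$ and leaves the cubic residual $\e^{-2}K$. The only genuine addition in your proposal is the explicit verification that \eqref{coord} is a symplectic reduction with form $-\ii\,dz\wedge d\overline{z}$ via cancellation of the $d\theta$ cross-terms along the mass constraint; the paper takes this for granted (attributing the reduction to \cite{Faou}), so your extra step makes the proof more self-contained without changing the route.
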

\begin{proof}
	Recalling the Hamiltonian in the original coordinates \eqref{hamNLS}, the thesis follows by the following direct computation
	\begin{align*}
		\int_{\T^2} |\alpha+z|^4 \,dx&={m} \int_{\T^2} (z^2+\overline{z}^2)\,dx +2m \| z \|_{L^2}^2-\frac{3}{4\pi^2} \| z \|_{L^2}^4\\
		&+ 2 \alpha  \int_{\T^2} (z+\bar{z}) |z|^2\,dx-\frac{\| z \|_{L^2}^2}{4\pi^2} \int_{\T^2} (z^2+\bar{z}^2)\,dx+\int_{\T^2} |z|^4\,dx
	\end{align*}
	and the fact that
	\begin{equation}\label{latte}
		\int_{\T^2} |z|^4\,dx=8\pi^2 \| z \|_{L^2}^4-4\pi^2\sum_{n\neq 0} |z_n|^4+4\pi^2\sum_{\substack{n_1-n_2+n_3-n_4=0,\\ n_1\neq n_2, n_4}} z_{n_1} \overline{z_{n_2}} z_{n_3} \overline{z_{n_4}}.
	\end{equation}
	
\end{proof}

\begin{remark}\label{rem:Sq}
	We point out that the Hamiltonian $H$ in \eqref{ham:pw} Poisson commutes with the (reduced) momentum Hamiltonians \eqref{momenta}, and then the subspaces $S_q$ as in \eqref{Sq} are preserved.
\end{remark}

\begin{remark}
	Differently from the NLS Hamiltonian \eqref{hamNLS}, the $L^2$ norm is not a conserved quantity for the Hamiltonian system \eqref{ham:pw}.
\end{remark}

\begin{remark}
	The way we write the term \eqref{latte}, and consequently $H^{(4)}$, turns out to be useful for the derivation of the effective finite dimensional system whose dynamics is, up to Gauge transformations, the same of the Toy model introduced in \cite{CKSTT}. 
\end{remark}

\noindent\textbf{Diagonalization of the linearized vector field.} One of our main goal is to derive  from the Hamiltonian \eqref{ham:pw} an effective finite dimensional system which resembles the dynamics of the Toy model introduced in \cite{CKSTT}. To do that, we will apply a normal form procedure.

It is convenient to find a set of coordinates which makes diagonal (with respect to the Fourier basis) the linear part of the vector field. 

The linearized problem of the Hamiltonian $H$ at the equilibrium $z=0$ on the $(z_n, \overline{z_{-n}})$-directions is determined by the following matrix
\[
A_n= \begin{pmatrix}
	|n|^2+2  m \e^{-2} & 2 {} m \e^{-2}\\
	-2 {} m \e^{-2} & -|n|^2-2  m \e^{-2}
\end{pmatrix}.
\]
The eigenvalues of $A_n$ are $\pm \omega(n)$ with
\begin{equation}\label{freq:pw}
	\omega(n)=\sqrt{ |n|^4+4 {\e^{-2}} m |n|^2} \qquad n\in \Z^2\setminus\{0\}.
\end{equation}
Following \cite{Faou}, the matrices
\begin{equation}\label{matrixSn}
	\begin{aligned}
		S_n=&\frac{1}{\sqrt{(\omega(n)+ |n|^2) (\omega(n)+ |n|^2+4{} m \e^{-2})}}\begin{pmatrix}
			|n|^2+ 2{} m \e^{-2}+\omega(n) & -2 {} m \e^{-2}\\
			-2 {} m \e^{-2} &  |n|^2+ 2{} m \e^{-2}+\omega(n)
		\end{pmatrix},\\
		S^{-1}_n=&\frac{1}{\sqrt{(\omega(n)+  |n|^2) (\omega(n)+ |n|^2+4{} m \e^{-2})}}\begin{pmatrix}
			|n|^2+ 2{} m \e^{-2}+\omega(n) & 2 {} m \e^{-2}\\
			2 {} m \e^{-2} & |n|^2+ 2{} m \e^{-2}+\omega(n)
		\end{pmatrix},
	\end{aligned}
\end{equation}
provide a diagonalizing change of basis for $A_n$, namely
\begin{equation}\label{SAS}
	S_n^{-1} A_n S_n= \begin{pmatrix}
		\omega(n) & 0\\
		0 & -\omega(n)
	\end{pmatrix}.
\end{equation}
Starting from this, we construct a symplectic change of variables on the phase space which makes diagonal the linearized vector field at the equilibrium and we study its pseudo differential structure. This analysis is one of the key ingredient to obtain good estimates on the remainder of the normal form procedure.

First we need some preliminary definitions on linear symplectic operators on $H^s\times H^s$. By abuse of notation, we rename $H^s$ the space \eqref{def:Hs} where the sequences $z$ are defined on $\Z^2\setminus\{0\}$, instead of $\Z^2$.

Given a linear map $A\colon H^s\times H^s \to H^s\times H^s$ we use the following notation
\begin{equation}\label{repre}
	A (z, w)=\sum_{n\in \Z^2\setminus\{0\}} A_n \begin{pmatrix}
		z_n\\ w_n
	\end{pmatrix}\,e^{\mathrm{i n \cdot x}}, \qquad A_n=\begin{pmatrix}
		a^{(1, 1)}_n & a^{(1, 2)}_n\\ a^{(2, 1)}_n & a_{n}^{(2, 2)}
	\end{pmatrix}.
\end{equation}

\begin{definition}
	A linear map $A\colon H^s\times H^s \to H^s\times H^s$ is said to be \emph{real to real} if $A\colon \mathbf{R}_s\to \mathbf{R}_s$ (see \eqref{Rs}) for all $s\geq 0$. 
\end{definition}

\begin{remark}
	On the real subspace $\mathbf{R}=\{ (z, w) : w=\bar{z} \}$ we have
	\[
	A (z, \bar{z})=\sum_{n\in \Z^2\setminus\{0\}} A_n \begin{pmatrix}
		z_n\\ \overline{z_{-n}}
	\end{pmatrix}\,e^{\mathrm{i n \cdot x}}.
	\]
\end{remark}

\begin{remark}
	It is easy to see that, using the representation \eqref{repre}, $A$ is real to real if and only if
	\begin{equation}\label{realtoreal}
		a^{(2, 1)}_n=\overline{a_{-n}^{(1, 2)}}, \qquad a^{(2, 2)}_n=\overline{a_{-n}^{(1, 1)}}.
	\end{equation}
\end{remark}

We denote by $E$ the diagonal matrix
\[
E=\begin{pmatrix}
	\mathrm{I} & 0\\ 0 & -\mathrm{I}
\end{pmatrix},
\]
where $\mathrm{I}\colon H^s\to H^s$ is the identity map. We note that
\[
E (z, w)=\sum_{n\in \Z^2\setminus\{0\}} E_n\,   \begin{pmatrix}
	z_n\\ w_n
\end{pmatrix}\,e^{\mathrm{i} n\cdot x}, \qquad E_n=\begin{pmatrix}
	1 & 0\\
	0 & -1
\end{pmatrix}.
\]

\begin{definition}
	A linear map $A\colon H^s\times H^s\to H^s\times H^s$ is symplectic if 
	\begin{equation}\label{rel:symp}
		A^*\,E\, A= E,
	\end{equation}
	where $A^*$ is the adjoint of $A$.
\end{definition}

\begin{remark}
	By noticing that
	\[
	A^*_n=\begin{pmatrix}
		\overline{a_{n}^{(1, 1)}} & \overline{a_n^{(2, 1)}}\\
		\overline{a_n^{(1, 2)}} & \overline{a_n^{(2, 2)}}
	\end{pmatrix}
	\]
	we see that \eqref{rel:symp} is equivalent to asking that
	\begin{equation}\label{sympl}
		\begin{cases}
			|a_n^{(1, 1)}|^2-|a_n^{(2, 1)}|^2=1,\\
			|a_n^{(2, 2)}|^2-|a_n^{(1, 2)}|^2=1,\\
			\overline{a_n^{(1, 1)}}\,a_n^{(1, 2)}=\overline{a_n^{(2, 1)}} a_n^{(2, 2)},\\
			\overline{a_n^{(1, 2)}} a_n^{(1, 1)}=\overline{a_n^{(2, 2)}} a_n^{(2, 1)} .
		\end{cases}
	\end{equation}
\end{remark}

Now we are in position to state the following proposition, which ensures the existence of a real to real, symplectic change of coordinates of the phase space which makes diagonal (in Fourier basis) the linearized vector field of $H$ at the origin.

\begin{proposition}\label{prop:S}
	Let $s\geq 0$. There exists a real to real, symplectic, linear change of coordinates $S\colon H^s\times H^s\to H^s\times H^s$ such that 
	\begin{equation}\label{quadpar}
		(H^{(2)}\circ S)(w, \overline{w})=\mathcal{H}^{(2)} (w, \overline{w}):=\sum_{n\in \Z^2\setminus\{0\}} 4\pi^2 \omega(n) |w_n|^2,
	\end{equation}
	where $\omega(n)$ are the linear frequencies of the reduced plane wave \eqref{freq:pw}.
	Moreover, $S=\mathrm{I}+\Psi$, where $\mathrm{I}$ is the identity on $H^s\times H^s$ and  $
	\Psi\colon H^s\times H^s\to H^{s+2}\times H^{s+2}$.	The inverse $S^{-1}=\mathrm{I}+\widetilde{\Psi}$, where $\widetilde{\Psi}$ has the same properties of $\Psi$.
\end{proposition}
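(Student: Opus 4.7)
The plan is to define $S$ block-diagonally in the Fourier basis, using the $2\times 2$ matrices $S_n$ provided in \eqref{matrixSn}. Explicitly, I would set
\[
S(z,\bar z)=\sum_{n\in\Z^2\setminus\{0\}} S_n\begin{pmatrix} z_n\\ \overline{z_{-n}}\end{pmatrix} e^{\mathrm{i} n\cdot x},
\]
and then verify the four required properties: (i) $H^{(2)}\circ S=\mathcal{H}^{(2)}$, (ii) $S$ is real-to-real, (iii) $S$ is symplectic, and (iv) the smoothing structure $S=I+\Psi$ with $\Psi\colon H^s\to H^{s+2}$. For (i), expanding $H^{(2)}$ of \eqref{H} in the Fourier basis one sees that the only inter-mode coupling is between $n$ and $-n$, coming from the term $2m\e^{-2}\int\Re(z^2)\,dx$; the associated quadratic form splits into $2\times 2$ blocks which, up to an overall factor of $4\pi^2$ produced by Parseval, are governed by the matrices $A_n$. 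Identity \eqref{SAS} then gives $H^{(2)}\circ S=4\pi^2\sum_{n\neq 0}\omega(n)|w_n|^2$, matching \eqref{quadpar}.

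For (ii) and (iii), I would argue purely algebraically from the explicit formulas in \eqref{matrixSn}. The entries of $S_n$ are real and symmetric under $n\mapsto -n$ (since $|n|^2$ and $\omega(n)$ are even), and within each $S_n$ the two diagonal entries coincide and the two off-diagonal entries coincide; condition \eqref{realtoreal} is therefore immediate. For the symplectic identities \eqref{sympl}, setting $\alpha_n:=|n|^2+2m\e^{-2}+\omega(n)$ and $\beta_n:=-2m\e^{-2}$, the first identity reduces to
\[
\alpha_n^2-\beta_n^2=(\alpha_n-\beta_n)(\alpha_n+\beta_n)=(\omega(n)+|n|^2+4m\e^{-2})(\omega(n)+|n|^2),
\]
which is exactly the normalizing denominator appearing in \eqref{matrixSn}; the remaining three relations follow from the same factorization together with the reality and symmetry of the entries.

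The analytic content of the statement is (iv). To establish it, I would use the Taylor expansion
\[
\omega(n)=|n|^2\sqrt{1+4m\e^{-2}|n|^{-2}}=|n|^2+2m\e^{-2}+O(|n|^{-2})\quad\text{as }|n|\to\infty,
\]
from which one computes that the diagonal entries of $S_n$ are of the form $1+O(|n|^{-4})$ and the off-diagonal entries are of the form $O(|n|^{-2})$, uniformly in $n$. Writing $S_n=\mathrm{I}_2+\Psi_n$ with $\|\Psi_n\|\lesssim \langle n\rangle^{-2}$ and using the characterization \eqref{def:Hs} of $H^s$ in Fourier, one concludes immediately that $\Psi\colon H^s\to H^{s+2}$. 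The same analysis applies verbatim to $S_n^{-1}$, which has the identical structure with the sign of the off-diagonal entries flipped, giving $S^{-1}=\mathrm{I}+\widetilde\Psi$ with $\widetilde\Psi$ enjoying the same mapping property.

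The main point that requires care is the bookkeeping of constants and conventions: one must track the $4\pi^2$ factors coming from Parseval's identity on $\T^2$ throughout, and verify that the operator $S$, which is symplectic on each $(n,-n)$-block, is symplectic as a map of the whole phase space with respect to the global symplectic form $-\mathrm{i}\,dz\wedge d\bar z$. This follows from the block-diagonal structure and the invariance of the blocks under the pairing $n\leftrightarrow -n$, but it is the point most prone to algebraic slips. None of the remaining steps presents conceptual difficulties beyond this bookkeeping.
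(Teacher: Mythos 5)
Your proposal is correct and follows essentially the same route as the paper: define $S$ block-diagonally via the $S_n$, verify real-to-real and symplectic directly from the explicit entries (the paper records this as $\td(n),\te(n)\in\R$ with $\td^2-\te^2=1$ and evenness in $n$, which is your factorization $\alpha_n^2-\beta_n^2=(\alpha_n-\beta_n)(\alpha_n+\beta_n)$ rewritten), obtain the smoothing $S=\mathrm{I}+\Psi$ from the $O(|n|^{-2})$ decay of $\td(n)-1$ and $\te(n)$, and deduce \eqref{quadpar} from \eqref{SAS} together with symplecticity. The only cosmetic differences are that your asymptotic $1+O(|n|^{-4})$ for the diagonal entries is slightly sharper than the paper's stated $O(|n|^{-2})$ bound (either suffices), and the paper writes the $m,\e$-dependence explicitly in \eqref{giala2} because it is reused later in the normal-form estimates.
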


\begin{remark}
	In the coordinates $(w, \bar{w})$ the equations for the linearized problem at the origin reads as
	\[
	\begin{cases}
		\mathrm{i}\dot{w}_n=\omega(n) w_n\\
		-\mathrm{i} \dot{\overline{w_n}}= \omega(n) \overline{w_n}.
	\end{cases} \qquad n\in \Z^2\setminus\{0\}.
	\]
\end{remark}

\begin{proof}

	We consider the linear change of variables $S$ defined by 
	\[
	(
	f,
	g
	)=
	S (q, p)=\sum_{n\in \Z^2\setminus\{0\}} S_n \begin{pmatrix}
		q_n\\ \overline{p_{-n}}
	\end{pmatrix}\,e^{\mathrm{i} n\cdot x}, \qquad (q, p)\in H^s\times H^s,
	\]
	where the matrix $S_n$ in \eqref{matrixSn} can be written as
	\[
	S_n=\begin{pmatrix}
		\mathtt{d}(n) & \mathtt{e}(n)\\
		\mathtt{e}(n) & \mathtt{d}(n)
	\end{pmatrix}
	\]
	with
	\begin{equation}\label{dnen}
		\begin{aligned}
			\mathtt{d}(n):=\sqrt{1+\frac{4{} m \e^{-2}}{\omega(n)+ |n|^2}}+\mathtt{e}(n), \qquad
			\mathtt{e}(n):=-\frac{2{} m \e^{-2}}{\sqrt{(\omega(n)+ |n|^2)(\omega(n)+ |n|^2+4 {} m \e^{-2})}}.
		\end{aligned}
	\end{equation}

	We observe that
	\begin{equation}\label{ed}
		\mathtt{d}(n), \mathtt{e}(n)\in \R, \qquad \td(n)^2-\te(n)^2=1, \qquad	\mathtt{d}(n)=\mathtt{d}(-n), \qquad \mathtt{e}(n)=\mathtt{e}(-n).
	\end{equation}
	Therefore, the linear map $S$ fulfills the conditions \eqref{realtoreal} and \eqref{sympl} and hence it is real to real and symplectic.
	We now show that
	\begin{equation}\label{giala2}
		|\mathtt{d}(n)-1|\le \frac{2 {} m \e^{-2}}{|n|^2}, \qquad  |\mathtt{e}(n)| \le \frac{2 {} m \e^{-2}}{|n|^2}.
	\end{equation}
	The bound for $\te(n)$ is evident. Then the bound for $\td(n)$ comes from the fact that
	\[
	\left|\sqrt{1+\frac{4{} m \e^{-2}}{\omega(n)+ |n|^2}}-1\right|=\left|\frac{4{} m \e^{-2}}{\omega(n)+ |n|^2}\right|\, \,\left|\frac{1}{1+\sqrt{1+\frac{4{} m \e^{-2}}{\omega(n)+ |n|^2}}}\right|\le \frac{2 {} m \e^{-2}}{|n|^2}.
	\]	
	Since
	\[
	S_n \begin{pmatrix}
		q_n\\ \overline{p_{-n}}
	\end{pmatrix}- \begin{pmatrix}
		q_n\\ \overline{p_{-n}}
	\end{pmatrix}=\begin{pmatrix}
		(\mathtt{d}(n)-1) q_n+\mathtt{e}(n) \overline{p_{-n}}\\
		\mathtt{e}(n) q_n+	(\mathtt{d}(n)-1) \overline{p_{-n}}
	\end{pmatrix},
	\]
	by the bounds \eqref{giala2} we deduce that $S-\mathrm{I}\colon H^s\times H^s\to H^{s+2}\times H^{s+2}$.
	We note that
	\[
	S^{-1}_n=\begin{pmatrix}
		\widetilde{\mathtt{d}}(n) & \widetilde{\mathtt{e}}(n)\\
		\widetilde{\mathtt{e}}(n) & \widetilde{\mathtt{d}}(n)
	\end{pmatrix}
	\]
	with
	$
	\widetilde{\mathtt{d}}(n)=\mathtt{d}(n), \widetilde{\mathtt{e}}(n)=-\mathtt{e}(n).
	$
	Therefore the inverse of $S$ has the same properties of $S$.

	Now we prove \eqref{quadpar}. Recalling \eqref{H}, we can write 
	\begin{equation*}
	\begin{split}
H^{(2)}(z, \bar{z})&=\sum_{n\in \Z^2\setminus\{0\}} ( |n|^2+2m \e^{-2} ) |z_n|^2+\e^{-2} m\sum_{n\in \Z^2\setminus\{0\}} (z_n z_{-n}+\overline{z_n z_{-n}})\\
	&=\frac{1}{2} \sum_{n\in \Z^2\setminus\{0\}} E_n A_n \begin{pmatrix}
		z_n \\ \overline{z_{-n}}
	\end{pmatrix} \cdot \begin{pmatrix}
		\overline{z_n} \\ {z_{-n}}
	\end{pmatrix}.
	\end{split}
	\end{equation*}
	Since $S$ is symplectic we have
	\begin{align*}
		\frac{1}{4\pi^2}\,H^{(2)}(z, \overline{z}) &=\frac{1}{4\pi^2}\,H^{(2)} \circ S (w, \overline{w})=\frac{1}{2} \sum_{n\in \Z^2\setminus\{0\}} S_n^*\,E_n A_n S_n \begin{pmatrix}
			w_n \\ \overline{w_{-n}}
		\end{pmatrix} \cdot \begin{pmatrix}
			\overline{w_n} \\ {w_{-n}}
		\end{pmatrix}\\
		&=\frac{1}{2} \sum_{n\in \Z^2\setminus\{0\}} S_n^*\,E_n A_n S_n \begin{pmatrix}
			w_n \\ \overline{w_{-n}}
		\end{pmatrix} \cdot \begin{pmatrix}
			\overline{w_n} \\ {w_{-n}}
		\end{pmatrix} \\
		&=\frac{1}{2} \sum_{n\in \Z^2\setminus\{0\}} \,E_n\,S_n^{-1} A_n S_n \begin{pmatrix}
			w_n \\ \overline{w_{-n}}
		\end{pmatrix} \cdot \begin{pmatrix}
			\overline{w_n} \\ {w_{-n}}
		\end{pmatrix}\\
		&\stackrel{\eqref{SAS}}{=}\frac{1}{2} \sum_{n\in \Z^2\setminus\{0\}} \,E_n\,\begin{pmatrix}
			\omega(n) & 0\\ 0 & -\omega(n)
		\end{pmatrix} \begin{pmatrix}
			w_n \\ \overline{w_{-n}}
		\end{pmatrix} \cdot \begin{pmatrix}
			\overline{w_n} \\ {w_{-n}}
		\end{pmatrix}\\
		&=\sum_{n\in \Z^2\setminus\{0\}} \omega(n) |w_n|^2,
	\end{align*}
	where we denoted by $S_n^*$ the adjoint of $S_n$.
	
\end{proof}
\begin{remark}
	From the proof of the above theorem we deduce the following relation between the Fourier coefficients of the old and new variables, respectively $(z, \bar{z})$ and $(w, \bar{w})$, which will be useful in the following,
	\begin{equation}\label{change}
		\begin{aligned}
			z_n&=\td(n)\, w_n+\te(n)\, \overline{w_{-n}}, \qquad \overline{z_{-n}}=\te (n) \,w_{n}+\td(n)\, \overline{w_{-n}}, \\
			w_n&=\mathtt{d}(n) z_n-\mathtt{e}(n) \overline{z_{-n}}, \qquad \overline{w_{-n}}=-\te(n) z_n+\td(n)\,\overline{z_{-n}},
		\end{aligned}\qquad n\in \Z^2\setminus\{0\},
	\end{equation}
	where $\mathtt{d}(n), \mathtt{e}(n)$ are defined in \eqref{dnen}. In particular, the $L^2$ norm in the new variables reads as
	\begin{equation}\label{zgw}
	\begin{aligned}
		\| z \|_{L^2}^2&=4\pi^2 \left( \sum_{n\neq 0} (\mathtt{d}(n)^2+\mathtt{e}(n)^2) |w_n|^2+2 \sum_{n\neq 0} \mathtt{d}(n) \mathtt{e}(n) \Re(w_n w_{-n})\right)=:\mathtt{G}(w).\\
		\| w \|_{L^2}^2&=4\pi^2 \left(\sum_{n\neq 0} (\mathtt{d}(n)^2+\mathtt{e}(n)^2) |z_n|^2-2 \sum_{n\neq 0} \mathtt{d}(n) \mathtt{e}(n) \Re(z_n z_{-n})\right)
	\end{aligned}
	\end{equation}
\end{remark}

\medskip

\noindent\textbf{The Hamiltonian structure in the new coordinates.}	By applying the symplectic change of coordinates $S$ given in Proposition \ref{prop:S}, the new Hamiltonian reads as
\begin{equation}\label{HamCal}
	\begin{aligned}
		\mathcal{H}(w, \overline{w})&=(H\circ S)\, (w, \overline{w})=\mathcal{H}^{(2)}(w, \overline{w})+\mathcal{H}^{(4)}(w, \overline{w})+\mathcal{K}(w, \overline{w})\\
		\mathcal{H}^{(2)} (w, \overline{w})&=\sum_{n\in \Z^2\setminus\{0\}} 4\pi^2 \omega(n) |w_n|^2, \qquad \mathcal{H}^{(4)}=\e^{-2}\,H^{(4)}\circ S, \qquad \mathcal{K}=\e^{-2}\,K\circ S.
	\end{aligned}
\end{equation}
The plane wave is now reduce to the origin $w=0$, which is an elliptic equilibrium with linear frequencies of oscillation $\omega(n)$ in \eqref{freq:pw}.

We observe that since $S$ is linear, $\mathcal{H}^{(4)}$ is homogeneous of degree $4$ as $H^{(4)}$. We point out that the Hamiltonian $K$ in \eqref{H} is analytic, but not homogeneous, and it has the following form
\[
K(z, \overline{z})=\alpha(z) \,G^{(3)}(z, \overline{z})=f(\| z \|_{L^2}^2)\, G^{(3)}(z, \overline{z}),
\]
where 
\begin{equation}\label{def:f}
f(x)= \frac{1}{2\pi}\sqrt{4\pi^2 m-x}
\end{equation}
 and $G^{(3)}$ is the homogeneous Hamiltonian of degree $3$ 
\[
G^{(3)}(z, \bar{z})=2{} \int (z+\bar{z}) |z|^2\,dx.
\]
Then, in the new coordinates,
\begin{equation}\label{calK}
	\begin{aligned}
		\mathcal{K}(w, \overline{w}) =\e^{-2} K(S(w, \overline{w}))=\mathcal{K}_1(w, \overline{w})+\mathcal{K}_2(w, \overline{w}),\\
		\mathcal{K}_1(w, \overline{w})=\e^{-2}\,f(\|  w \|_{L^2}^2) \,\mathcal{G}^{(3)}(w, \overline{w}), \qquad \mathcal{K}_2(w, \overline{w})=\e^{-2}\, \Theta(w)\, \mathcal{G}^{(3)}(w, \overline{w}),
	\end{aligned}
\end{equation}
where $\mathcal{G}^{(3)}=G^{(3)}\circ S$ is a homogeneous Hamiltonian of degree $3$ and (recall the definition of $\mathtt{G}$ in \eqref{zgw})
\begin{equation}\label{cincin}
	\begin{aligned}
		\Theta(w)&=f(\mathtt{G}(w))-f(\|  w \|_{L^2}^2)=g(w)\,\,\mathcal{P}^{(2)}(w, \overline{w}),\\
		g(w)&=\left(\int_0^1 f'( (1-\tau)\|  w \|_{L^2}^2+\tau \mathtt{G}(w))\,  \,d\tau\right),\\
		\mathcal{P}^{(2)}(w, \overline{w})&=\mathtt{G}(w)-\| w \|_{L^2}^2\\
		&\stackrel{\eqref{ed}}{=}8\pi^2 \left( \sum_{n\neq 0} \mathtt{e}(n)^2\, |w_n|^2+ \sum_{n\neq 0} \mathtt{d}(n) \mathtt{e}(n) \Re(w_n w_{-n})\right).
	\end{aligned}
\end{equation}

We observe that $g$ is well defined on $\| w\|_{L^2}^2<4\pi^2 m$. Actually, in the following we will work on even smaller domains of $w$.

\begin{remark}
	The vector field of the Hamiltonian $\mathcal{K}_1$ has a zero at the origin of order two. Hence, close to the origin, it is the part of $\mathcal{K}$ affecting most the dynamics.
\end{remark}

The special form of the Hamiltonian $\mathcal{K}$ plays a major role in proving the polynomial bound on the time $T$ in \eqref{bound:growth}. We study the Hamiltonians of this form in Lemma \ref{lem:stimeBNF} below.

\subsection{The \texorpdfstring{$\Lambda$}{} set}\label{sec:Lambdaset}

In this section we state the main properties of the Fourier support of the initial data which gives rise to suitable approximate solutions to \eqref{ham:pw} displaying an energy cascade behavior. This set is usually called $\Lambda$ and it has been first introduced in \cite{CKSTT}. 

In this paper we consider a slight modification of the set $\Lambda$ constructed in \cite{CKSTT, GuardiaHHMP19, Hani}.

The set $\Lambda\subset\Z^2$ can be decomposed as the disjoint union of sets $\Lambda_i$, called \emph{generations},
\[
\Lambda=\Lambda_1\cup\dots\cup \Lambda_{\mathtt{N}}.
\]
Each generation possesses $2^{\mathtt{N}-1}$ elements.
The number of generations $\mathtt{N}$ is a parameter of the problem which will be fixed depending on the prescribed growth to achieve. 

We say that a quartet $(n_1, n_2, n_3, n_4)$ is a \emph{nuclear family} if $n_1, n_3\in \Lambda_i$ and $n_2, n_4\in \Lambda_{i+1}$ for some $i=1, \dots, \mathtt{N}-1$ and they form a non-degenerate rectangle in the $\Z^2$-lattice. 
\begin{remark}
	We observe that the vertices of rectangles in $\Z^2$ are $4$-wave resonances for the NLS equation, but not with respect to the linear frequencies of oscillation $\omega(n)=|n|^2+O(1)$ of the plane wave. 
\end{remark}

We recall the properties of the set $\Lambda$ constructed in \cite{CKSTT, GuardiaHHMP19} and we rephrase an additional property introduced in \cite{Hani} (see also the remark below):
\begin{itemize}
	\item[$(P1)$] (Closure): If $n_1, n_2, n_3\in\Lambda$ are three vertices of a rectangle then the fourth vertex belongs to $\Lambda$ too.
	\item[$(P2)$] (Existence and uniqueness of spouse and children): For each $1\le i\le \mathtt{N}-1$  and every $n_1\in\Lambda_i$ there exists a unique spouse $n_3\in \Lambda_i$ and unique (up to trivial permutations) children $n_2, n_4\in\Lambda_{i+1}$ such that $(n_1, n_2, n_3, n_4)$ is a nuclear family in $\Lambda$.
	\item[$(P3)$] (Existence and uniqueness of parents and sibling): For each $1\le i\le \mathtt{N}-1$  and every $n_2\in\Lambda_{i+1}$ there exists a unique sibling $n_4\in\Lambda_{i+1}$ and unique (up to trivial permutations) parents $n_1, n_3\in\Lambda_{i}$ such that $(n_1, n_2, n_3, n_4)$ is a nuclear family in $\Lambda$.
	\item[$(P4)$] (Non-degeneracy): A sibling of any mode $m$ is never equal to its spouse.
	\item[$(P5)$] (Faithfulness): Apart from nuclear families, $\Lambda$ contains no other rectangles.
	{ \item[$(P6)$] If four points $n_1, n_2, n_3, n_4$ in $\Lambda$ satisfy $n_1-n_2+n_3-n_4=0$ then either the relation is trivial or such points form a family.}
	\item[$(P7)$] There no exist $n_1, n_2\in \Lambda$ such that 
	\[
	\sigma_1 |n_1|^2+\sigma_2 |n_2|^2+\sigma_3 |n_3|^2=0, \qquad \sigma_1, \sigma_2, \sigma_3\in \{\pm\},
	\]
	for any $n_3\in \Z^2$.
	Namely there are no $3$-waves $NLS$ resonant interactions involving two modes in $\Lambda$.
\end{itemize}
\begin{remark}
	The {Property} VIII$_{\Lambda}$ in \cite{Hani} says that no pair of modes from $\Lambda$ form a rectangle with the zero mode. Since we made a symplectic reduction which eliminated the zero mode, this is equivalent to the  property $(P7)$.
\end{remark}

We scale the set $\Lambda\subset \Z^2$ by a large factor $q\in \mathbb{N}$ and we denote the scaled set again with $\Lambda\subset q  \Z^2$. The parameter $q$ will be used to place the rectangles in $\Lambda$ close to $4$-wave resonances of the plane wave (see Lemma \ref{lem:U0}). The following theorem ensures the existence of the set $\Lambda$ satisfying all the properties that we need.

\begin{theorem}\label{thm:gen}
	Fix any $\tilde{\eta}>0$ small and let $s>1$. Then, there exists $\alpha>0$ large enough such that for any $\mathtt{N}>0$ large enough and any $q\in\mathbb{N}$, there exists a set $\Lambda\subset  q  \Z^2$ with
	\[
	\Lambda:=\Lambda_1\cup\dots\cup\Lambda_\mathtt{N},
	\]
	which satisfies conditions $(P1)-(P7)$ and also
	\begin{equation*}
		\frac{\sum_{n\in \Lambda_{\mathtt{N}-2}}|n|^{2s}}{\sum_{n\in \Lambda_{3}}|n|^{2s}}\gtrsim \,2^{(s-1)(\mathtt{N}-4)}\,.
	\end{equation*}
	Moreover, we can ensure that each generation $\Lambda_i$ has $2^{\mathtt{N}-1}$ disjoint frequencies satisfying 
	\begin{equation}\label{bound:gen}
		\frac{\sum_{n\in \Lambda_{j}}|n|^{2s}}{\sum_{n\in \Lambda_{i}}|n|^{2s}}\lesssim\,e^{s \mathtt{N}} ,
	\end{equation}
	for any $1\le i< j\le \mathtt{N}$, and 
	\begin{equation}\label{def:R}
		C^{-1}\,q\, R\le  |{n}| {\le C\,\,q\,3^\mathtt{N}\, R}, \qquad \forall {n}\in {\Lambda}_i, \qquad i=1, \dots, \mathtt{N},
	\end{equation}
	where $C>0$ is independent of $\mathtt{N}$ and
	$R=R(\mathtt{N})$ satisfies
\[
e^{\alpha^\mathtt{N}}\le R \le e^{2(1+\tilde{\eta})\alpha^\mathtt{N}}.
\]

\end{theorem}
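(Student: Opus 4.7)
The plan is to follow the inductive construction of $\Lambda$ developed in \cite{CKSTT} and refined in \cite{GuardiaHHMP19, Hani, Giu}, with an additional genericity argument to enforce the new property $(P7)$ together with the quantitative bounds.

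First, I introduce the standard combinatorial model in which each generation $\widetilde{\Lambda}_i$ is parametrized by $\{0,1\}^{\mathtt{N}-1}$, spouse corresponds to flipping a designated coordinate, and children are defined by a rotation-and-halving rule, so that every family is a non-degenerate rectangle. Properties $(P1)$--$(P4)$ then hold by construction. I realize $\widetilde{\Lambda}$ in $\R^2$ through iterated rotations (with distinct angles at each generation) and a global scaling $R$, and finally multiply by $q$ to land in $q\Z^2$. The rotation angles and the scaling factor $R$ are the free parameters used to enforce the remaining conditions; the scaling by $q$ only rescales every squared magnitude by $q^2$, so it preserves all combinatorial and Diophantine properties.

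Properties $(P5)$--$(P6)$, which are linear constraints on the embedded coordinates, are handled by a standard genericity argument: the forbidden linear relations form a countable union of proper subvarieties in the parameter space, and are thus generically avoided. To enforce the new property $(P7)$, I observe that each potentially resonant relation $\sigma_1|n_1|^2+\sigma_2|n_2|^2+\sigma_3|n_3|^2=0$ with $n_1, n_2\in\Lambda$ and $n_3\in\Z^2$ amounts to requiring that $\sigma_1|n_1|^2+\sigma_2|n_2|^2$ be realized as plus or minus the squared norm of some integer vector. Since this expression depends polynomially on the rotation angles and on $R$, each such equation cuts out a proper real-analytic subvariety of the parameter space, and the union over all $n_1, n_2\in\Lambda$, all sign choices, and all candidate $n_3$ with $|n_3|\leq C\,q\,3^{\mathtt{N}}R$ gives finitely many forbidden subvarieties, which generic parameters avoid. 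The exponential window $R\in[e^{\alpha^{\mathtt{N}}}, e^{2(1+\tilde{\eta})\alpha^{\mathtt{N}}}]$ provides the measure-theoretic freedom to select such good parameters, provided $\alpha$ is chosen sufficiently large at the outset.

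Finally, the quantitative estimates follow from the aspect-ratio control built into the CKSTT construction: magnitudes within a generation stay in a band of ratio $\leq C\,3^{\mathtt{N}}$, yielding \eqref{def:R}; the same bound directly produces \eqref{bound:gen}. For the main estimate $\sum_{n\in\Lambda_{\mathtt{N}-2}}|n|^{2s}/\sum_{n\in\Lambda_3}|n|^{2s}\gtrsim 2^{(s-1)(\mathtt{N}-4)}$, the resonance identity $|n_1|^2+|n_3|^2=|n_2|^2+|n_4|^2$ valid for each family, combined with the CKSTT geometry in which one child is close to its parents and the other much farther from the origin, forces the averaged squared magnitudes on the ``large'' half of each generation to grow geometrically with rate $\sqrt{2}$; iterating over the generations between $\Lambda_3$ and $\Lambda_{\mathtt{N}-2}$ and using that the cardinality $|\Lambda_i|=2^{\mathtt{N}-1}$ is conserved produces the desired lower bound for $s>1$. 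The main obstacle is simultaneously realizing $(P7)$, which is \emph{quadratic} in the parameters, together with $(P5)$--$(P6)$, which are \emph{linear} in the parameters, within the restricted exponential window for $R$; this is handled by a pigeonhole argument on the exponentially large parameter space, in the spirit of \cite{Hani}.
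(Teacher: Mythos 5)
The paper's proof of this theorem is a one-sentence citation: properties $(P1)$--$(P5)$ with the quantitative bounds come from \cite{CKSTT}, $(P6)$ from \cite{GuardiaHHMP19}, $(P7)$ from \cite{Hani}, and all are preserved under the dilation $n\mapsto qn$. Your proposal instead tries to reconstruct the underlying construction, which is a legitimate but much more ambitious route, and in its current form it has two genuine gaps.

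First, the genericity step is not quite right as stated. You realize the abstract family tree in $\R^2$ via iterated rotations with free angles and a free scaling $R$, and then argue that $(P5)$--$(P7)$ are avoided because the forbidden relations cut out proper real-analytic subvarieties of the (continuous) parameter space, which are generically missed. But the set $\Lambda$ must consist of \emph{lattice points} in $\Z^2$ (before the final dilation by $q$), so the relevant parameter set is discrete, and real-analytic genericity of a countable union of subvarieties does not directly produce a lattice point in the complement. What \cite{CKSTT} and \cite{Hani} actually do is a quantitative counting/density argument in $\Z[i]$: one needs the good set of continuous parameters to be not merely generic but large in a measured sense inside the exponential window for $R$, so that the lattice approximation stays inside it. Your appeal to ``measure-theoretic freedom'' and ``pigeonhole'' gestures at this, but the argument as written — genericity, then pick — would not compile; the density estimate is the content, not a footnote. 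Relatedly, writing that the scaling $n\mapsto qn$ ``preserves all combinatorial and Diophantine properties'' is true but not as immediate as it sounds for $(P7)$: after scaling, the obstruction is whether $q^2K$ is representable as a sum of two integer squares when $K$ is not, and the invariance uses that multiplying by a perfect square preserves representability (Fermat's two-squares criterion). The paper leans on exactly this scaling remark, so if you take the citation route you should at least verify it for $(P7)$.

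Second, the main quantitative estimate $\sum_{n\in\Lambda_{\mathtt{N}-2}}|n|^{2s}/\sum_{n\in\Lambda_3}|n|^{2s}\gtrsim 2^{(s-1)(\mathtt{N}-4)}$ is asserted from the resonance identity $|n_1|^2+|n_3|^2=|n_2|^2+|n_4|^2$ plus the claim that the ``large half'' of each generation grows at rate $\sqrt 2$. That heuristic points in the right direction but does not constitute a proof: the ratio in \cite{CKSTT} comes from an explicit placement that tracks how the $\ell^{2s}$ mass of the generations evolves, together with the cardinality count $|\Lambda_i|=2^{\mathtt{N}-1}$. You need to either reproduce that computation or, as the paper does, cite Theorem 3.1 / Proposition 2.1 type statements from \cite{CKSTT, GuardiaHHMP19, Giu} where the inequality is established (and then note it is homogeneous in $|n|$, hence stable under the $q$-dilation). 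As it stands, the key growth bound — which is the whole point of the theorem — is not actually derived.

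In short: the shape of your argument matches the cited literature, but you have substituted ``generic'' for the lattice-density argument and ``geometrically at rate $\sqrt 2$'' for the actual $\ell^{2s}$ computation, and both of those are precisely the nontrivial parts. The paper sidesteps all of this by citing, and then only needs to check — which it does — that properties $(P1)$--$(P7)$ and the magnitude window \eqref{def:R} are invariant under $n\mapsto qn$.
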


{\begin{proof}
	This theorem has been proved first in \cite{CKSTT} for a set satisfying $(P1)-(P5)$. In \cite{GuardiaHHMP19} and \cite{Hani} it has been proved that the set $\Lambda$ can be constructed to satisfy also, respectively, $(P6)$ and $(P7)$. All these properties are invariant under the scaling $n \mapsto q\,n$, $q\in \mathbb{N}$. 
\end{proof}}

\subsection{Normal form}

In this section we partially normalize the Hamiltonian \eqref{HamCal}. In order to identify the terms that we need to remove / normalize we need some preliminary definitions.

\smallskip

By Remark \ref{rem:Sq}, the fact that the change of coordinates $S$ constructed in Proposition \ref{prop:S} is momentum preserving and the discussion below Theorem \ref{thm:weak}, we have that the subspaces $S_q$ in \eqref{Sq} are preserved. Hence we can work in a space of functions whose frequencies are supported on the dilated lattice 
\[
q\,(\Z^2\setminus\{0\})=:\Z^2_q,
\] where $q\in\mathbb{N}$ will be chosen opportunely. We notice that the set $\Lambda$ can be constructed to belong to $\Z^2_q$.

\smallskip

Given a homogenous Hamiltonian $H^{(N)}$ of degree $N$ we write
\[
H^{(N)}(w, \overline{w})=\sum_{\pi(\sigma, n)=0} [H^{(N)}]_{n_1 \dots n_N}^{\sigma_1 \dots \sigma_N } w_{n_1}^{\sigma_1} \dots w_{n_N}^{\sigma_{n_N}}, \qquad \pi(\sigma, n):=\sum_{i=1}^N \sigma_i n_i=0,
\]
where the indices $n_i\in \Z^2_q$, $[H^{(N)}]_{n_1 \dots n_N}^{\sigma_1 \dots \sigma_N }$ are complex coefficients, $\sigma_i\in \{ \pm \}$ and
\[
w_n^{+}:=w_n, \quad w_n^{-}:=\overline{w_n}.
\]
The condition $\pi(\sigma, n)=0$ encodes the information that the Hamiltonian $H^{(N)}$ commutes with the momentum Hamiltonians $\mathcal{M}_k$, $k=1, 2$ in \eqref{momenta}. We also define
\[
\bral H^{(N)} \brar:=\sup_{(\sigma_i, n_i)} | [H^{(N)}]^{\sigma_1 \dots \sigma_{N}}_{n_1 \dots n_{N}}|.
\]

\begin{definition}\label{hom:ham}
Given a homogenous Hamiltonian ${H}^{(N)}$ of degree $N$ we denote by ${H}^{(N, d)}$, with $0\le d\le N$, the terms supported on the set
\begin{equation}\label{def:A}
	\begin{aligned}
		\mathcal{A}_{N, d}:=\Big\{ & (n_1, \dots, n_N)\in (\Z^2_q)^N : 
		\,\, \text{exactly}\,\, d\,\, \text{integer vectors $n_i$ belong to}\,\, (\Z^2_q)\setminus \Lambda \Big\}.
	\end{aligned}
\end{equation}
Thus, ${H}^{(N)}=\sum_{d=0}^N {H}^{(N, d)}$. Moreover, for $0\le k\le N$, we use the following notations
\[
\mathcal{A}_{N, \le k}=\cup_{d=0}^k \mathcal{A}_{N, d}, \qquad \mathcal{A}_{N, \geq k}=\cup_{d=k}^N \mathcal{A}_{N, d}.
\]
We denote by $H^{(N, \le k)}$ and $H^{(N, \geq k)}$ respectively the terms supported on $\mathcal{A}_{N, \le k}$ and $\mathcal{A}_{N, \geq k}$.
\end{definition}

The following remarks are fundamental for the application of the normal form procedure.

\begin{remark}
Since $\mathcal{H}^{(2)}$ defines a diagonal (with respect to the Fourier basis) vector field, the adjoint action $\{ \mathcal{H}^{(2)}, \cdot \}$ is diagonal on monomials. Then, given a Hamiltonian $H^{(N, k)}$, the Hamiltonian $\{ \mathcal{H}^{(2)}, H^{(N, k)}\}$ is of the same type of $H^{(N, k)}$. In other words, $\{ \mathcal{H}^{(2)}, \cdot \}$ leaves invariant the support of the Hamiltonian.
\end{remark}

\begin{remark}\label{rem:finite}
Fixed $\{\sigma_i\}_{i=1}^N$, the set of $N$-tuples $(n_1, \dots, n_N)$ such that
\[
(n_1, \dots, n_N)\in \mathcal{A}_{N, \le 1}, \qquad \sigma_1 n_1+\dots+\sigma_N n_N=0
\]
is finite. Thus, a Hamiltonian of the form $H^{(N, \le 1)}$ defines a finite dimensional system of ODEs.
\end{remark}

\begin{remark}\label{rem:fund}
We point out that the vector field of homogenous Hamiltonians of the form $H^{(N, k)}$ with $k\neq 1$ leaves invariant the finite dimensional subspace
\[
\mathcal{U}_{\Lambda}=\{ w_n=0 \,\,\,n\notin \Lambda \}.
\]
In particular, the vector field of $H^{(N, \geq 2)}$ vanishes on $\mathcal{U}_{\Lambda}$. Hence, the dynamics of the restriction of $H^{(N, 0)}+ H^{(N, \geq 2)}$ to $\mathcal{U}_{\Lambda}$ is determined only by $H^{(N, 0)}$.
\end{remark}

The following lemma is classical, its proof is a consequence of Young's inequality, and will be use repeatedly to estimate the vector field of homogenous Hamiltonians.

\begin{lemma}\label{lem:young}
Let 
\[
F^{(d+1)}(w, \overline{w})=\sum_{\substack{\pi(\sigma, n)=0}} [F^{(d+1)}]^{\sigma_1 \dots \sigma_{d+1}}_{n_1 \dots n_{d+1}}\,w^{\sigma_1}_{n_1}\dots w^{\sigma_{d+1}}_{n_{d+1}}
\]
be a homogenous Hamiltonian of degree $d+1$ preserving momentum. Then
\[
\| X_F^{(d+1)}(w) \|_{\ell^1}\lesssim \bral F^{(d+1)} \brar \| w\|_{\ell^1}^d \qquad \forall w\in \ell^1.
\]
Moreover, for all $w, w'\in \ell^1$,
\begin{align*}
	\| X_{F^{(d+1)}}(w)-X_{F^{(d+1)}}(w') \|_{\ell^1} &\lesssim \bral {F^{(d+1)}} \brar (\| w \|_{\ell^1}+\| w' \|_{\ell^1})^{d-1} \,\| w-w' \|_{\ell^1},\\
	\| X_{F^{(d+1)}}(w)-X_{F^{(d+1)}}(w')-D X_{F^{(d+1)}}(w) [w-w'] \|_{\ell^1} &\lesssim  \bral {F^{(d+1)}} \brar \| w \|_{\ell^1}^{d-2} \| w-w' \|^2_{\ell^1},
\end{align*}
where $D X_{F^{(d+1)}}(w)$ denotes the differential of the vector field at $w$.
\end{lemma}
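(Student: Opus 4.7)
The three bounds are all direct consequences of the $\ell^1$-algebra property (Young's inequality $\ell^1\ast\ell^1\hookrightarrow\ell^1$) once one writes out the components of $X_{F^{(d+1)}}$ in Fourier. The momentum-conservation constraint $\pi(\sigma,n)=0$ in the definition of $F^{(d+1)}$ is precisely what turns sums of monomial evaluations into convolutions of the sequences $(|w_n|)_{n\in\Z^2_q}$.

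First I would write the $n$-th component of the vector field explicitly. Up to a combinatorial symmetry factor counting the positions at which the derivative lands, and taking a symmetrized representative of $[F^{(d+1)}]$, one has
\[
[X_{F^{(d+1)}}(w)]_n^{\pm} \;=\; C_{d}\sum_{\substack{(\sigma_i,m_i)_{i=1}^d\\ \sum_{i=1}^d \sigma_i m_i \,=\, \mp n}} [F^{(d+1)}]^{\sigma_1\cdots\sigma_d,\,\mp}_{m_1\cdots m_d,\,n}\, w_{m_1}^{\sigma_1}\cdots w_{m_d}^{\sigma_d}.
\]
Taking absolute values, pulling out $\bral F^{(d+1)}\brar$, and summing in $n$, the constraint $n=\mp\sum_i\sigma_i m_i$ becomes inactive once one sums also in $n$, so
\[
\|X_{F^{(d+1)}}(w)\|_{\ell^1} \;\lesssim\; \bral F^{(d+1)} \brar \sum_{\sigma_1,\dots,\sigma_d}\sum_{m_1,\dots,m_d}|w_{m_1}|\cdots|w_{m_d}| \;\lesssim\; \bral F^{(d+1)} \brar \|w\|_{\ell^1}^{d},
\]
which gives the first bound.

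For the Lipschitz estimate I would use the standard telescoping identity
\[
\prod_{i=1}^{d}a_i-\prod_{i=1}^{d}b_i=\sum_{j=1}^{d}\Big(\prod_{i<j}a_i\Big)(a_j-b_j)\Big(\prod_{i>j}b_i\Big),
\]
applied termwise to $a_i=w_{m_i}^{\sigma_i}$, $b_i=(w')_{m_i}^{\sigma_i}$ inside the expression for $X_{F^{(d+1)}}(w)-X_{F^{(d+1)}}(w')$. Each of the $d$ resulting sums is again a convolution-type expression with one factor $|w_{m_j}-w'_{m_j}|$ and $d-1$ factors in $|w|$ or $|w'|$, and the same $\ell^1$ computation produces the bound $\bral F^{(d+1)}\brar(\|w\|_{\ell^1}+\|w'\|_{\ell^1})^{d-1}\|w-w'\|_{\ell^1}$. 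The second-order remainder estimate is then obtained by the Taylor formula
\[
X_{F^{(d+1)}}(w')-X_{F^{(d+1)}}(w)-DX_{F^{(d+1)}}(w)[w'-w]=\int_{0}^{1}(1-\tau)\,D^{2}X_{F^{(d+1)}}\!\big(w+\tau(w'-w)\big)[w'-w,w'-w]\,d\tau,
\]
noting that $D^{2}X_{F^{(d+1)}}$ is a homogeneous polynomial expression of degree $d-2$ in $w$ with coefficients controlled by $\bral F^{(d+1)}\brar$; an $\ell^1$ estimate in exactly the same spirit as in the first step, followed by integration in $\tau$, yields the third bound.

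The only genuinely technical point is keeping track of the combinatorial prefactors $C_d$ and of the symmetrization of $[F^{(d+1)}]$ required before differentiating; no analytic difficulty arises beyond the $\ell^1\ast\ell^1\hookrightarrow\ell^1$ inequality, which is why the conservation-of-momentum condition $\pi(\sigma,n)=0$ plays the essential role.
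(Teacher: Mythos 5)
Your argument follows exactly the route the paper indicates (it labels the lemma ``classical'' and attributes it to Young's inequality, giving no proof), and the first two bounds are proved correctly: the momentum constraint turns the $\ell^1$-sum of the vector field into a $d$-fold convolution, and the telescoping identity gives the Lipschitz estimate.

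For the third bound, however, your Taylor-remainder computation does not yield the inequality as literally stated. The integrand evaluates $D^2X_{F^{(d+1)}}$ at $w+\tau(w'-w)$, not at $w$, so the $\ell^1$ estimate produces a factor $(\|w\|_{\ell^1}+\|w-w'\|_{\ell^1})^{d-2}$ rather than $\|w\|_{\ell^1}^{d-2}$. (The same conclusion falls out of the multilinear expansion: writing $X(w)=M(w,\dots,w)$ with $M$ symmetric, one finds $X(w)-X(w')-DX(w)[w-w'] = -\sum_{k\geqslant 2}\binom{d}{k}(-1)^kM(w^{(d-k)},(w-w')^{(k)})$, whose $\ell^1$ norm is $\lesssim \|w-w'\|^2\sum_{k\geqslant 2}\binom{d}{k}\|w\|^{d-k}\|w-w'\|^{k-2}$.) The inequality with $\|w\|^{d-2}$ alone is in fact false as stated for arbitrary $w,w'$ — take $w=0$ and $w'\neq 0$ — so it must be read under the implicit restriction $\|w-w'\|_{\ell^1}\lesssim\|w\|_{\ell^1}$, which is indeed what holds when the lemma is invoked in the approximation argument ($w=r^\lambda$, $w'=r^\lambda+\xi$, with $\|\xi\|_{\ell^1}\ll\|r^\lambda\|_{\ell^1}$). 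You should state this restriction (or equivalently replace $\|w\|^{d-2}$ by $(\|w\|+\|w-w'\|)^{d-2}$) for the estimate to be correct; with that caveat your proof is complete.
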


\noindent\textbf{Resonant interactions.} To provide estimates for the normalizing changes of coordinates $\Gamma$ and for the remainder $\mathcal{R}$ of the normal form procedure of Proposition \ref{prop:wbnf} we need to provide lower bounds for non-resonant $3$-wave and $4$-wave interactions between the linear frequencies of the reduced plane wave $\omega(n)$ in \eqref{freq:pw}. Since we only need to partially normalize the Hamiltonian, it is sufficient to our purposes to consider the interactions among linear frequencies of oscillations $\omega(n_1), \omega(n_2), \dots, \omega(n_k)$ where $(n_1, \dots, n_k)\in\mathcal{A}_{k, 1}$, for $k=3, 4$ (recall \eqref{def:A}). Since the Hamiltonian terms are momentum preserving, the only mode which is out of the $\Lambda$ set is given as a linear combination of the modes in $\Lambda$.

\medskip

We first observe that, by \eqref{freq:pw},
\begin{equation}\label{exp}
\omega(n)-|n|^2=2 {\e^{-2}} m+r_n,  \qquad n\in \Z^2_q,
\end{equation}
where $r_n$ satisfies the bound
\[
|r_n|\lesssim \frac{m^2}{\e^4 |n|^{2}} .
\]
We consider $q>0$ such that for a sufficiently small constant $c_0>0$ (depending on the prescribed growth ratio $\mathcal{K}$)
\begin{equation}\label{cond:qeps}
\delta:=\e^{-1} q^{-1}<c_0.
\end{equation}
{Under this assumption 
\begin{equation}\label{om}
	|\omega(n)|\gtrsim  |n|^2\gtrsim  q^2 \qquad \forall n\in \Z_q^2.
\end{equation}}

\begin{lemma}{($3$-waves interaction)}\label{lem:smalldiv}
Assume \eqref{cond:qeps}.
For any $n_1, n_2\in \Lambda$ and $n_3\notin \Lambda$
\[
|\sigma_1 \omega(n_1)+\sigma_2\omega(n_2)+\sigma_3 \omega(n_3)|\gtrsim  q^2, \qquad \sigma_1, \sigma_2, \sigma_3\in \{\pm\}.
\]
\end{lemma}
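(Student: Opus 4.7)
\textbf{Proof plan for Lemma \ref{lem:smalldiv}.} The strategy is to expand $\omega(n)$ around $|n|^2$, split the $3$-wave combination into an integer-valued leading part, a constant plane-wave shift, and a small remainder, and then use property $(P7)$ together with an integrality argument to prevent cancellation of the leading part.

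First, using \eqref{exp}, I would write
\begin{equation*}
\sigma_1\omega(n_1)+\sigma_2\omega(n_2)+\sigma_3\omega(n_3) \;=\; A + B + C,
\end{equation*}
where $A:=\sigma_1|n_1|^2+\sigma_2|n_2|^2+\sigma_3|n_3|^2$, $B:=2(\sigma_1+\sigma_2+\sigma_3)\e^{-2}m$, and $C:=\sigma_1 r_{n_1}+\sigma_2 r_{n_2}+\sigma_3 r_{n_3}$. Note that $\sigma_1+\sigma_2+\sigma_3\in\{\pm 1,\pm 3\}$, so $B$ is never zero, but it may well have the opposite sign from $A$. My task reduces to estimating these three contributions and ensuring $A$ dominates.

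The key observation for $A$ is that since the Hamiltonian terms are momentum preserving, $\sigma_1 n_1+\sigma_2 n_2+\sigma_3 n_3=0$, hence $n_3=-\sigma_3(\sigma_1 n_1+\sigma_2 n_2)\in q\Z^2$, using that $n_1,n_2\in\Lambda\subset q\Z^2$. Therefore each $|n_i|^2\in q^2\Z$ and consequently $A\in q^2\Z$. Property $(P7)$ of the $\Lambda$ set forbids $A=0$ for any $n_1,n_2\in\Lambda$, $n_3\in\Z^2$, and any sign choice. Combining integrality and non-vanishing gives the crucial lower bound $|A|\geq q^2$.

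It then suffices to control $|B|+|C|$. Clearly $|B|\le 6\e^{-2}m$, and from the estimate $|r_n|\lesssim m^2\e^{-4}|n|^{-2}$ together with $|n_i|\geq q$ I obtain $|C|\lesssim m^2\e^{-4}q^{-2}$. Invoking the smallness assumption \eqref{cond:qeps} yields $\e^{-2}\le c_0^2 q^2$ and $\e^{-4}\le c_0^4 q^4$, whence
\begin{equation*}
|B|\lesssim m\,c_0^2\,q^2, \qquad |C|\lesssim m^2\,c_0^4\,q^2.
\end{equation*}
Choosing $c_0=c_0(m)$ small enough so that $|B|+|C|\le\tfrac{1}{2}q^2$, the reverse triangle inequality gives $|A+B+C|\ge |A|-|B|-|C|\ge\tfrac{1}{2}q^2$, which is the claimed bound. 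The only minor subtlety is the $m$-dependence of $c_0$, which is harmless since $m$ is a fixed parameter of the problem; no obstacle arises beyond booking this dependence carefully.
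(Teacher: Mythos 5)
Your proof is correct and follows essentially the same strategy as the paper's: isolate the leading integer-valued term $\sigma_1|n_1|^2+\sigma_2|n_2|^2+\sigma_3|n_3|^2$, use $(P7)$ plus the observation that it lies in $q^2\Z$ to get the lower bound $q^2$, and show the remaining contribution is $O(m\,\e^{-2})=O(m\,c_0^2\,q^2)$, hence absorbable for $c_0$ small. The only cosmetic difference is that you split the remainder into $B+C$ via the expansion \eqref{exp}, whereas the paper bounds $\Omega(m)-\Omega(0)$ by differentiating in the mass parameter and applying the fundamental theorem of calculus; the two bookkeepings are interchangeable.
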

\begin{proof}

We define the function
\[
\Omega(m)=\sigma_1 \omega(n_1)+\sigma_2\omega(n_2)+\sigma_3\omega(n_3), \qquad \Omega(0)=\sigma_1 |n_1|^2+\sigma_2 |n_2|^2+\sigma_3 |n_3|^2.
\]
We have that
\[
|\Omega'(m) |=\left|\sigma_1 \frac{2 m |n_1|^2}{ \omega(n_1)}+\sigma_2 \frac{2 m |n_2|^2}{ \omega(n_2)}+\sigma_3 \frac{2 m |n_3|^2}{ \omega(n_3)}\right|\le 6m.
\]
By $(P7)$ the quantity $\Omega(0)\neq 0$. We have that 
\[
\Omega(m)=\Omega(0)+\int_0^m \Omega'(\tau)\,d\tau,
\]
hence, since
\[
0\neq \Omega(0)=|n_1|^2-|n_2|^2+|n_1-n_2|^2=2|n_1|^2-2(n_1, n_2)\in q^2 \Z
\]
we obtain
$$
|\Omega(m)|\geq |\Omega(0)|-6m\geq q^2 \left( 1-\frac{6m}{ q^2} \right)\stackrel{\eqref{cond:qeps}}{\gtrsim} q^2.
$$
\end{proof}
\begin{lemma}{($4$-waves interaction)}\label{lem:smalldiv2}
Assume \eqref{cond:qeps}.
Let $n_1, n_2, n_3\in \Lambda$ be such that $-(n_1-n_2+n_3)\notin \Lambda$. Then
\[
| \omega(n_1)- \omega(n_2)+ \omega(n_3)-\omega(n_1-n_2+n_3)|\gtrsim  q^2.
\]
\end{lemma}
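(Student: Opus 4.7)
The plan is to mirror the proof of Lemma \ref{lem:smalldiv}, treating the mass $m$ as a continuous parameter in the dispersion relation. Setting $n_4 := n_1 - n_2 + n_3$ and using the evenness $\omega(n) = \omega(-n)$, the quantity to estimate equals $|\Omega(m)|$, where
\[
\Omega(\mu) := \omega_\mu(n_1) - \omega_\mu(n_2) + \omega_\mu(n_3) - \omega_\mu(n_4), \qquad \omega_\mu(n) := \sqrt{|n|^4 + 4\e^{-2}\mu|n|^2}.
\]
Writing $\Omega(m) = \Omega(0) + \int_0^m \Omega'(\tau)\,d\tau$, the argument splits, as in the three-wave case, into a lower bound for $|\Omega(0)|$ on the one hand and a control of the derivative on the other.

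The core step is the lower bound on $\Omega(0)$, which is now a four-wave NLS resonance. A direct expansion of $|n_4|^2 = |n_1 - n_2 + n_3|^2$ gives the identity
\[
\Omega(0) = |n_1|^2 - |n_2|^2 + |n_3|^2 - |n_4|^2 = 2(n_1 - n_2)\cdot(n_2 - n_3).
\]
Vanishing of this inner product forces either $n_1 = n_2$, or $n_2 = n_3$, or the pair of nonzero vectors $n_1 - n_2$ and $n_2 - n_3$ to be orthogonal --- in which case $(n_1, n_2, n_3, n_4)$ form the vertices of a non-degenerate rectangle in $\Z^2$. In the two degenerate scenarios one has $n_4 \in \{n_3, n_1\} \subset \Lambda$; in the non-degenerate case, the closure property $(P1)$ of $\Lambda$ forces $n_4 \in \Lambda$. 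All three alternatives contradict the hypothesis (using that the $\Lambda$-set of Theorem \ref{thm:gen} can be taken symmetric under $n \mapsto -n$, so that $-(n_1 - n_2 + n_3) \notin \Lambda$ is equivalent to $n_4 \notin \Lambda$). Hence $\Omega(0)\neq 0$, and since all modes lie in $q\Z^2$ we get $\Omega(0) \in q^2\Z\setminus\{0\}$, so $|\Omega(0)| \geq q^2$.

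For the correction, the same computation as in Lemma \ref{lem:smalldiv} applies verbatim: $|\Omega'(\mu)| \le \sum_{i=1}^4 |\partial_\mu \omega_\mu(n_i)| \le 8\e^{-2}$ since $|n|^2/\omega_\mu(n) \le 1$. Hence $|\Omega(m) - \Omega(0)| \le 8m\e^{-2}$, and by \eqref{cond:qeps},
\[
|\Omega(m)| \geq q^2 - 8m\e^{-2} = q^2\bigl(1 - 8m\delta^2\bigr) \gtrsim q^2,
\]
provided $c_0$ is chosen small enough depending on $m$. The only genuinely new ingredient compared with Lemma \ref{lem:smalldiv} is the combinatorial step: recognizing $\Omega(0) = 0$ as the rectangle condition on the $\Z^2$-lattice and ruling it out via property $(P1)$, the degenerate rectangles being immediate. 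Once this structural point is settled, the analytic estimate is a line-by-line adaptation of the three-wave argument.
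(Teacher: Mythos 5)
Your proof is correct and follows the same argument as the paper: interpolate in the mass $m$, show $\Omega(0)=2(n_1-n_2)\cdot(n_2-n_3)\in q^2\Z\setminus\{0\}$ via the closure property $(P1)$, and absorb the $\mu$-integral of the derivative using \eqref{cond:qeps}. You add useful detail on the combinatorial step (the paper simply states ``by the closure property $(P1)$, this quantity is not zero''), and you carry the $\e^{-2}$ correctly in the derivative bound where the paper writes $8m$ by what appears to be a typo; similarly, the minus sign in the hypothesis $-(n_1-n_2+n_3)\notin\Lambda$ is best read as a typo for $n_1-n_2+n_3\notin\Lambda$ (this is what is actually used when solving the homological equation for $G_I^{(4,1)}$, where $n_4=n_1-n_2+n_3\notin\Lambda$), so your aside about taking $\Lambda$ symmetric under $n\mapsto -n$ is a safe but unnecessary patch.
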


\begin{proof}
We define
\[
\Omega(m)=\omega(n_1)- \omega(n_2)+ \omega(n_3)-\omega(n_1-n_2+n_3), \qquad \Omega(0)=|n_1|^2-|n_2|^2+|n_3|^2-|n_1-n_2+n_3|^2.
\]
Reasoning as in Lemma \ref{lem:smalldiv} we have
\[
|\Omega'(m)|\le 8 m.
\]
We have that
$
\Omega(0)\in q^2\Z
$
and, by the closure property $(P1)$, this quantity is not zero.  Then 
\[
|\Omega(m)|\geq q^2-8 m= q^2 \left( 1-\frac{8 m}{ q^2} \right) \stackrel{\eqref{cond:qeps}}{\gtrsim} q^2.
\]
\end{proof}

If we Taylor expand the function $\alpha=\alpha(z)$ in \eqref{def:alpha} at $z=0$ we obtain an expansion of the Hamiltonian \eqref{HamCal} of the form
\[
\mathcal{H}=\mathcal{H}^{(2)}+\mathcal{H}^{(3)}+\mathcal{H}^{(4)}+\mathcal{H}^{(\geq 5)},
\]
where $\mathcal{H}^{(j)}$ are homogenous of degree $j$, while $\mathcal{H}^{(\geq 5)}$ is analytic with a zero at $z=0$ of order five.

Following the standard Birkhoff normal form procedure one would eliminate the Hamiltonian terms of degree three, and higher. Unfortunately, to obtain the polynomial time estimates \eqref{bound:growth} we cannot follow this strategy, because new \emph{non perturbative} Hamiltonian terms of degree four (and higher) would appear from the normalization step of $\mathcal{H}^{(3)}$. Instead, we need to normalize the whole term $\mathcal{K}_1$ in \eqref{calK}. This is not a homogenous Hamiltonian term, but it has a special structure, which is left invariant by the adjoint action $\{ \mathcal{H}^{(2)}, \cdot\}$ (see Remark \ref{rem:special} below). In the following lemma we analyze the Hamiltonians of this form.

\begin{lemma}\label{lem:stimeBNF}
Let $r>0$ be small enough.
Given a Hamiltonian of the form
\[
F(w, \overline{w})=f(\| w \|_{L^2}^2) \mathcal{F}^{(3)}(w, \overline{w})
\]
where $f$ is defined in \eqref{def:f} and 
\[
\mathcal{F}^{(3)}=\sum_{\substack{\mathcal{A}_{3, \le 1}\\ \pi(\sigma, n)=0}} [\mathcal{F}^{(3)}]^{\sigma_1 \sigma_2 \sigma_3}_{n_1 n_2 n_3} \,w^{\sigma_1}_{n_1} w^{\sigma_2}_{n_2} w^{\sigma_3}_{n_3}
\]
we have
\begin{equation}\label{bound:vec}
	\| X_F(w)\|_{\ell^1}\lesssim \bral \mathcal{F}^{(3)} \brar \| w \|_{\ell^1}^2 \qquad \forall w\in B_{\ell^1}(r).
\end{equation}
Moreover,
let $k\geq 3$ and 
\[
G^{(k)}=\sum_{\pi(\sigma, n)=0} [G^{(k)}]_{n_1 \dots n_k}^{\sigma_1 \dots \sigma_k}\,w_{n_1}^{\sigma_1}\dots w_{n_k}^{\sigma_k}
\]
be a homogeneous Hamiltonian of degree $k$. Then 
\begin{equation}\label{poisson}
	\{ F, G^{(k)} \}=f'(\| w \|_{L^2}^2) \mathcal{F}^{(3)}\, \{ \| w \|_{L^2}^2 , G^{(k)} \} +f(\| w \|_{L^2}^2)\{ \mathcal{F}^{(3)}, G^{(k)} \}
\end{equation}
and the following estimate holds
\begin{equation}\label{bound:vec2}
	\| X_{\{ F, G^{(k)}\}}(w) \|_{\ell^1}\lesssim \bral G^{(k)} \brar\,\bral \mathcal{F} \brar\,\|w\|^k_{\ell^1} \qquad \forall w\in B_{\ell^1}(r).
\end{equation}

\end{lemma}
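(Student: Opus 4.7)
For the first bound \eqref{bound:vec}, I would start from the Leibniz identity for the Hamiltonian vector field of the product $F = f(\|w\|_{L^2}^2)\,\mathcal{F}^{(3)}$,
$$X_F(w) \;=\; f(\|w\|_{L^2}^2)\,X_{\mathcal{F}^{(3)}}(w) \;+\; \mathcal{F}^{(3)}(w,\bar w)\,X_{f(\|w\|_{L^2}^2)}(w).$$
By the chain rule $X_{f(\|w\|_{L^2}^2)}(w) = 4\pi^2\,f'(\|w\|_{L^2}^2)\,X_{\|w\|_{L^2}^2}(w)$, and $X_{\|w\|_{L^2}^2}(w)\propto w$ has $\ell^1$-norm controlled by $\|w\|_{\ell^1}$. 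Moreover $\|w\|_{L^2}^2 \leq 4\pi^2\|w\|_{\ell^1}^2 \leq 4\pi^2 r^2$, so choosing $r$ small keeps $\|w\|_{L^2}^2$ bounded away from the singular value $4\pi^2 m$ of $f$ in \eqref{def:f}, making $f,f'$ uniformly bounded. Combining Lemma~\ref{lem:young} on $\mathcal{F}^{(3)}$ with the pointwise estimate $|\mathcal{F}^{(3)}(w,\bar w)| \lesssim \bral\mathcal{F}^{(3)}\brar\,\|w\|_{\ell^1}^3$ then gives
$\|X_F(w)\|_{\ell^1} \lesssim \bral\mathcal{F}^{(3)}\brar\,(\|w\|_{\ell^1}^2 + \|w\|_{\ell^1}^4) \lesssim \bral\mathcal{F}^{(3)}\brar\,\|w\|_{\ell^1}^2$ for $\|w\|_{\ell^1}\leq r$.

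The identity \eqref{poisson} is immediate from the Leibniz rule $\{AB,C\} = A\{B,C\} + B\{A,C\}$ together with the chain rule $\{f(\|w\|_{L^2}^2),\,G^{(k)}\} = f'(\|w\|_{L^2}^2)\,\{\|w\|_{L^2}^2,\,G^{(k)}\}$, both obtained by direct expansion of the Poisson bracket through Wirtinger derivatives.

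For \eqref{bound:vec2}, I would introduce the homogeneous Hamiltonians $g_k := \{\|w\|_{L^2}^2, G^{(k)}\}$ (degree $k$) and $h_{k+1} := \{\mathcal{F}^{(3)}, G^{(k)}\}$ (degree $k+1$). Since $\|w\|_{L^2}^2$ is diagonal with unit coefficients, $g_k$ is obtained from $G^{(k)}$ by reweighting each monomial by a bounded signed index count, whence $\bral g_k\brar \lesssim_k \bral G^{(k)}\brar$. For $h_{k+1}$ a fixed output monomial of degree $k+1$ receives contributions from only $O(k^2)$ contractions: once one chooses which two of the output slots come from $\mathcal{F}^{(3)}$, the contracted index is uniquely fixed by momentum conservation, giving $\bral h_{k+1}\brar \lesssim_k \bral\mathcal{F}^{(3)}\brar\,\bral G^{(k)}\brar$. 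Applying the Leibniz rule once more to the right-hand side of \eqref{poisson}, the vector field $X_{\{F,G^{(k)}\}}$ becomes a finite sum of products of the bounded scalars $f,f',f''$ with vector fields of homogeneous polynomials of controlled degrees. Invoking Lemma~\ref{lem:young} on each such piece, the leading term $f(\|w\|_{L^2}^2)\,X_{h_{k+1}}$ already produces the target bound $\bral\mathcal{F}^{(3)}\brar\,\bral G^{(k)}\brar\,\|w\|_{\ell^1}^k$, while all remaining contributions carry at least two additional powers of $\|w\|_{\ell^1}$ (coming either from the extra $w$ in $X_{f}$, $X_{f'}$, or from the scalar factor $\mathcal{F}^{(3)}(w,\bar w)\lesssim\|w\|_{\ell^1}^3$) and are absorbed thanks to $\|w\|_{\ell^1}\leq r$.

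The main obstacle is the systematic bookkeeping across the repeated applications of Leibniz rule in the derivation of \eqref{bound:vec2}, and in particular the verification that the combinatorial coefficient bound $\bral h_{k+1}\brar \lesssim_k \bral\mathcal{F}^{(3)}\brar\,\bral G^{(k)}\brar$ remains uniform despite $G^{(k)}$ being a potentially infinite sum of monomials — the crucial point being that each single output coefficient of $h_{k+1}$ is assembled from only finitely many contractions. Once this combinatorial check is in place, everything reduces to a mechanical application of Lemma~\ref{lem:young} and the boundedness of $f, f', f''$ on $B_{\ell^1}(r)$.
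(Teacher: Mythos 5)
Your proposal is correct and follows essentially the same route as the paper's proof: expand $\partial_{\overline{w_n}}\{F,G^{(k)}\}$ via Leibniz and chain rules into the same five-term sum, bound $f,f',f''$ uniformly on $B_{\ell^1}(r)$, and invoke Lemma~\ref{lem:young} and the identity-level bound $|\{\|w\|_{L^2}^2,G^{(k)}\}|\lesssim_k\bral G^{(k)}\brar\|w\|_{\ell^1}^k$ on each piece. The only place you go beyond the paper is in making explicit the coefficient bound $\bral\{\mathcal{F}^{(3)},G^{(k)}\}\brar\lesssim_k\bral\mathcal{F}^{(3)}\brar\bral G^{(k)}\brar$ (needed for the term $f(\|w\|_{L^2}^2)\,\partial_{\overline{w_n}}\{\mathcal{F}^{(3)},G^{(k)}\}$), which the paper leaves implicit; your momentum-conservation argument that each output coefficient collects only $O(k^2)$ contractions is indeed the right resolution and is uniform in the (possibly infinite) monomial support of $G^{(k)}$.
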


\begin{remark}\label{rem:special}
We observe that, since $\mathcal{H}^{(2)}$ commutes with the $L^2$ norm, by \eqref{poisson}
\[
\{ F, \mathcal{H}^{(2)} \}=f(\| w \|_{L^2}^2)\{ \mathcal{F}^{(3)}, \mathcal{H}^{(2)} \}.
\]
\end{remark}

\begin{remark}
The restriction on the domain where we give the estimates \eqref{bound:vec}, \eqref{bound:vec2} is required to obtain uniform bounds on the derivatives of $f(x)$ at $x=0$. We remark that this is necessary also to remain in the domain of definition of the variables \eqref{coord}.
\end{remark}

\begin{proof}
The $w_n$-component of the vector field of $F$ is given, up to constant factors, by
\[
\partial_{\overline{w_n}} F=\left( f'(\| w \|_{L^2}^2)\,w_n\right)\, \mathcal{F}^{(3)}+f(\| w \|_{L^2}^2) \partial_{\overline{w_n}}{\mathcal{F}^{(3)}}.
\]
Since, for $r>0$ small enough,
\begin{equation}\label{billy}
	\sup_{w\in B_{\ell^1}(r)} |f(\| w \|_{L^2}^2)|\lesssim 1,\qquad \sup_{w\in B_{\ell^1}(r)} |f'(\| w \|_{L^2}^2)|\lesssim 1
\end{equation}
and by Lemma \ref{lem:young}
\[
|\mathcal{F}^{(3)}|\lesssim \bral \mathcal{F}^{(3)}\brar \| w \|^3_{\ell^1}, \qquad |\partial_{\overline{w_n}}\mathcal{F}^{(3)}|\le \|X_{\mathcal{F}^{(3)}} \|_{\ell^1}\lesssim \bral \mathcal{F}^{(3)}  \brar \| w \|^2_{\ell^1},
\]
we obtain the bound \eqref{bound:vec}.

The expression \eqref{poisson} comes from a direct computation.
The $w_n$-component of the vector field of $\{F, G\}$ is given, up to factors, by
\begin{align*}
	\partial_{\overline{w_n}}  \{ F, G^{(k)}\}&=f^{''}(\| w \|_{L^2}^2) w_n \{ \| w \|_{L^2}^2 , G^{(k)} \} \mathcal{F}^{(3)}+f'(\| w \|_{L^2}^2)\,\left(\partial_{\overline{w_n}} \{ \| w \|_{L^2}^2 , G^{(k)} \}\right) \mathcal{F}^{(3)}\\
	&+f'(\| w \|_{L^2}^2) \{ \| w \|_{L^2}^2 , G^{(k)} \} \left(\partial_{\overline{w_n}}\mathcal{F}^{(3)}\right)+f'(\| w \|_{L^2}^2)\,w_n\{ \mathcal{F}^{(3)}, G^{(k)} \}\\
	&+f(\| w \|_{L^2}^2) \,\left(\partial_{\overline{w_n}}\{ \mathcal{F}^{(3)}, G^{(k)} \}\right) .
\end{align*}
Since
\[
\sup_{w\in B_{\ell^1}(r)} |f''(\| w \|_{L^2}^2)|\lesssim 1
\]
and
\[
| \{ \| w \|_{L^2}^2 , G^{(k)}(w, \overline{w}) \}|\lesssim \bral G^{(k)} \brar \| w \|^k_{\ell^1} \qquad \forall w\in \ell^1
\]
we obtain the estimate \eqref{bound:vec2}.
\end{proof}

The aim of the normal form procedure is to eliminate the first order Hamiltonian terms which do not leave invariant the finite dimensional subspace
\[
\mathcal{U}_{\Lambda}=\{ w_n=0 \,\,\,n\notin \Lambda \}.
\]
Indeed, our goal is to show that the restriction of the Hamiltonian of degree $4$ on this set gives a system displaying the same dynamical features of the \emph{Toy model} introduced in \cite{CKSTT}, in particular the existence of an energy cascade orbit.

We observe that the Hamiltonian of degree four in \eqref{H}, and then in \eqref{HamCal}, is quite different with respect to the Hamiltonian of degree four of the cubic NLS equation, from which the Toy model is derived. For the moment let us focus on the expression of $H^{(4)}$ in \eqref{H} (ignoring the change of coordinates $S$). This differs from the Hamiltonian of the cubic NLS by the presence of the terms
\begin{equation}\label{latte2}
{} \| z \|_{L^2}^4, \qquad2 {} \| z \|_{L^2}^2 \int_{\T^2} \Re(z^2)\,dx.
\end{equation}
The first term in \eqref{latte2} Poisson commutes with the Hamiltonian of the Toy model
\begin{equation}\label{hamTM}
\begin{aligned}
	H_{TM}(z, \overline{z}):&=-{4\pi^2}\sum_{n\in \Lambda} |z_n|^4+{4\pi^2}\sum_{\substack{n_1-n_2+n_3-n_4=0,\\ n_1\neq n_2, n_4\\ n_i\in \Lambda}} z_{n_1} \overline{z_{n_2}} z_{n_3} \overline{z_{n_4}} \\
	&=-{4\pi^2}\sum_{n\in \Lambda} |z_n|^4+{4\pi^2}\sum_{\substack{(n_1, n_2, n_3, n_4)\,\mathrm{is}\\\mathrm{a\,\,nuclear\,\,family}}} z_{n_1} \overline{z_{n_2}} z_{n_3} \overline{z_{n_4}},
\end{aligned}
\end{equation}
where, for the second equality, we used the property $(P6)$. We will see that, for this reason, this term is harmless.

The second term in \eqref{latte2} turns out to be non resonant, hence it can be eliminated by a Birkhoff normal form transformation.

\smallskip

Recalling \eqref{calK}, \eqref{cincin}, we consider the following decomposition
\begin{equation}\label{K1}
\begin{aligned}
	\mathcal{K}_1(w, \overline{w})&=\mathcal{K}_1^{*}(w, \overline{w})+\mathcal{K}_1^{>}(w, \overline{w}),\\
	\mathcal{K}_1^{*}=\e^{-2}\,f(\| \Pi_{\Lambda} \cdot  \|_{L^2}^2)\,\mathcal{G}^{(3, \le 1)}, \qquad &
	\mathcal{K}_1^{>}=\e^{-2}\,f (\| \cdot \|_{L^2}^2) \,\mathcal{G}^{(3)}-\e^{-2}\,f(\| \Pi_{\Lambda} \cdot  \|_{L^2}^2)\,\mathcal{G}^{(3, \le 1)},
\end{aligned}
\end{equation}
\begin{equation}\label{K2}
\begin{aligned}
	\mathcal{K}_2(w, \overline{w})&=\mathcal{K}_2^{*}(w, \overline{w})+\mathcal{K}_2^{>}(w, \overline{w})\\
	\mathcal{K}_2^{*}&=\e^{-2}\,g(w)\,\mathcal{P}^{(2, 0)} \mathcal{G}^{(3, \le 1)}+\e^{-2}\,g(w)\, \mathcal{P}^{(2, 1)} \mathcal{G}^{(3, 0)}\\
	\mathcal{K}_2^{>}&=\e^{-2}\,g(w)\, \mathcal{P}^{(2, 2)} \mathcal{G}^{(3)}+\e^{-2}\,g(w)\,\mathcal{P}^{(2, \le 1)} \mathcal{G}^{(3, \geq 2)}+\e^{-2}\,g(w)\, \mathcal{P}^{(2, 1)} \mathcal{G}^{(3, 1)},
\end{aligned}
\end{equation}
where we denoted by $\Pi_{\Lambda}$ the Fourier projector
\[
\Pi_{\Lambda} w=\sum_{n\in\Lambda} w_n\,e^{\mathrm{i} n \cdot x}.
\]
The terms $\mathcal{K}_1^{>}$, $\mathcal{K}_2^{>}$ preserve the subspace $\mathcal{U}_{\Lambda}$. This fact is not trivial for $\mathcal{K}_1^{>}$, and it will be proved later (see Lemma \ref{lem:inv}).

The term $\mathcal{K}_1^*$ will be eliminated in the normal form procedure. The term $\mathcal{K}_2^*$ will be shown to be perturbative for our analysis. This is because, thanks to the bounds \eqref{giala2} and \eqref{def:R}, the coefficients of $\mathcal{P}^{(2, 0)}$ and $\mathcal{P}^{(2, 1)}$ can be considered small.

\begin{proposition}{(Partial normal form)}\label{prop:wbnf}
There exists a constant $c_0>0$ such that if (recall the definition of $\delta$ in \eqref{cond:qeps})
\[
\delta<c_0
\]
then the following holds.
There exists a constant $C>0$ such that
if $\eta_0\in (0, 1)$ satisfies 
\begin{equation}\label{condsmall}
	C \eta_0<1
\end{equation}
then for all $\eta\in (0, \eta_0)$ the following holds.
There exists a symplectic change of coordinates $\Gamma\colon B_{\ell^1}(\eta)\to B_{\ell^1}(2\eta)$ such that
\[
\mathcal{H}\circ \Gamma=\sum_{n\neq 0} \omega(n) |w_n|^2+\mathcal{K}^{>}+ \widetilde{H}_{TM}+\mathcal{H}^{(4, \geq 2)}+ \mathcal{R}
\]
where (recall \eqref{K1}, \eqref{K2}, \eqref{hamTM})
\begin{align*}
	\mathcal{K}^{>}(w, \overline{w})=\mathcal{K}_1^{>}(w, \overline{w})+\mathcal{K}_2^{>}(w, \overline{w}), \qquad
	 \widetilde{H}_{TM}(w, \overline{w})=H_{TM} (w, \overline{w})- {} \| \Pi_{\Lambda} w \|_{L^2}^4
\end{align*}
and $\mathcal{R}$ satisfies the following estimate
\begin{equation}\label{bound:remainder}
	\| X_{\mathcal{R}}(w)\|_{\ell^1}\lesssim \e^{-2} \delta^{2}  \| w \|_{\ell^1}^3 \qquad \forall w\in B_{\ell^1}(\eta).
\end{equation}
Moreover,
\begin{equation}\label{closetoId}
	\| \Gamma(w)-w\|_{\ell^1}\lesssim \delta^2 \| w \|_{\ell^1}^2 \qquad \forall w\in B_{\ell^1}(\eta).
\end{equation}

\end{proposition}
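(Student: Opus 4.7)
The plan is to construct $\Gamma$ as the composition $\Phi^1_{F_1}\circ \Phi^1_{F_2}$ of two time-one Hamiltonian flows: $F_1$ eliminates the cubic resonant/near-resonant piece $\mathcal{K}_1^*$, while $F_2$ is a homogeneous degree-four Hamiltonian that removes the $(\le 1)$-part of $\mathcal{H}^{(4)}$ outside $\widetilde H_{TM}$, together with the residual degree-four contributions generated at the first step. The cornerstone is that $\mathcal{K}_1^*$ has the special non-homogeneous form $\e^{-2}f(\|\Pi_\Lambda w\|_{L^2}^2)\,\mathcal{G}^{(3,\le 1)}$: since $\|\Pi_\Lambda w\|_{L^2}^2=\sum_{n\in\Lambda}|w_n|^2$ Poisson-commutes with the diagonal $\mathcal{H}^{(2)}$, the class of Hamiltonians of this form is stable under the adjoint action $\{\mathcal{H}^{(2)},\cdot\}$ (formula \eqref{poisson} and Remark \ref{rem:special}).

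For Step 1, the idea is to choose $F_1=\e^{-2}f(\|\Pi_\Lambda w\|_{L^2}^2)\,\widetilde{\mathcal{G}}^{(3,\le 1)}$ of the same class; the homological equation $\{F_1,\mathcal{H}^{(2)}\}=-\mathcal{K}_1^*$ then reduces to $\{\widetilde{\mathcal{G}}^{(3,\le 1)},\mathcal{H}^{(2)}\}=-\mathcal{G}^{(3,\le 1)}$, which is solved coefficient-wise by dividing by the 3-wave divisors $\sigma_1\omega(n_1)+\sigma_2\omega(n_2)+\sigma_3\omega(n_3)$, bounded below by $cq^2$ on $\mathcal{A}_{3,\le 1}$ via Lemma \ref{lem:smalldiv} (which exploits property (P7)). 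Lemma \ref{lem:stimeBNF} then yields $\|X_{F_1}(w)\|_{\ell^1}\lesssim \e^{-2}q^{-2}\|w\|_{\ell^1}^2=\delta^2\|w\|_{\ell^1}^2$, so $\Phi^1_{F_1}:B_{\ell^1}(\eta)\to B_{\ell^1}(2\eta)$ is well defined under \eqref{condsmall}, and close to the identity as in \eqref{closetoId}. For Step 2, $F_2$ is taken as a homogeneous degree-four Hamiltonian obtained by coefficient-wise division by the 4-wave divisors $\omega(n_1)-\omega(n_2)+\omega(n_3)-\omega(n_4)$, applied only to the non-resonant monomials in $\mathcal{A}_{4,\le 1}$ not already part of $\widetilde H_{TM}$; the lower bound $\gtrsim q^2$ comes from Lemma \ref{lem:smalldiv2}, giving $\|X_{F_2}(w)\|_{\ell^1}\lesssim\delta^2\|w\|_{\ell^1}^3$. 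A resonance analysis on $\mathcal{A}_{4,0}$ via property (P6) identifies the resonant piece of $\mathcal{H}^{(4,0)}$ as $H_{TM}$ plus a multiple of $\|\Pi_\Lambda w\|_{L^2}^4$; the subtraction of $m\|\Pi_\Lambda w\|_{L^2}^4$ in $\widetilde H_{TM}$ precisely accounts for the $\|z\|_{L^2}^4$-type term appearing in \eqref{H}.

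The remainder $\mathcal{R}$ collects: (i) the iterated Lie brackets $\tfrac{1}{k!}\mathrm{ad}^k_{F_j}(\cdot)$ for $k\ge 2$, each inheriting an extra $\delta^2$ factor from the smallness of $F_j$; (ii) the cross-brackets $\{F_j,\cdot\}$ with $\mathcal{H}^{(4)}$, $\mathcal{K}_1^>$, $\mathcal{K}_2$; (iii) the cubic term $\mathcal{K}_2^*$, which is already of remainder size because its quadratic factors $\mathcal{P}^{(2,0)}$ and $\mathcal{P}^{(2,1)}$ have coefficients of order $\te(n)^2$ or $\td(n)\te(n)$, both $\lesssim \delta^2$ by \eqref{giala2} on $\Z^2_q$, so $\|X_{\mathcal{K}_2^*}\|_{\ell^1}\lesssim \e^{-2}\delta^2\|w\|_{\ell^1}^3$; and (iv) the smoothing corrections from $S=\mathrm{I}+\Psi$ in $\mathcal{H}^{(4)}=\e^{-2}H^{(4)}\circ S$, whose $\ell^1$-vector-field norms decay in $q$ thanks to Proposition \ref{prop:S}. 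Combining these bounds via Lemmas \ref{lem:young} and \ref{lem:stimeBNF} yields \eqref{bound:remainder}; the estimate \eqref{closetoId} follows from $\|\Phi^1_{F_j}(w)-w\|_{\ell^1}\lesssim\delta^2\|w\|_{\ell^1}^2$ by the chain rule.

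The main obstacle will be treating the degree-four structure consistently with the shifted frequencies $\omega(n)=\sqrt{|n|^4+4\e^{-2}m|n|^2}$: nuclear families in $\Lambda$ are only approximately resonant (the corresponding divisor is $O(\e^{-4}|n|^{-2})$ rather than exactly $0$), so $F_2$ must be restricted to genuinely non-resonant monomials, and the discrepancy between $\omega$-resonance and $|n|^2$-resonance on nuclear families must be absorbed into the remainder via \eqref{giala2}. A secondary technical point is the non-polynomial character of $F_1$: the Lie series expansion must be carried out entirely within the class of Hamiltonians stable under $\{\mathcal{H}^{(2)},\cdot\}$ identified in Lemma \ref{lem:stimeBNF}, so that iterated brackets produce only controlled new terms and the full-jet nature of $f$ does not spoil the estimates.
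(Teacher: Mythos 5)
Your plan reproduces the paper's two-step construction almost exactly: first eliminate $\mathcal{K}_1^*$ with a non-homogeneous generator of the stable class from Lemma \ref{lem:stimeBNF} (using the three-wave bound of Lemma \ref{lem:smalldiv}), then eliminate $h^{(4,0)}$ and $H^{(4,1)}$ with a homogeneous degree-four generator (using Lemma \ref{lem:smalldiv2}), and collect the cross brackets, iterated brackets, $\mathcal{K}_2^*$, and the $S$-correction $\mathcal{H}^{(4,\le 1)}-\e^{-2}H^{(4,\le 1)}$ in the remainder $\mathcal{R}$. The only misplaced idea is in your closing paragraph on obstacles: the $\omega$-versus-$|n|^2$ near-resonance on nuclear families is \emph{not} absorbed into $\mathcal{R}$ at this stage (dividing by those near-resonant divisors would cost a factor $\delta^{-2}$, not gain $\delta^{2}$); those monomials are left untouched inside $H_{TM}$, and the near-resonance is only dealt with later in the approximation argument, through the time-dependent term $\mathcal{P}'(t)$ and Lemma \ref{lem:U0}, rather than via \eqref{giala2}.
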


The crucial point is that, even if the vector field of the remainder $\mathcal{R}$ has a cubic estimate (and not of higher order as in the standard Birkhoff procedure) it involves the quantity $\delta^2$, which can be made arbitrarily small by tuning the internal parameter $q$. This bound is obtained thanks to the fact that we do not expand the Hamiltonian $\mathcal{K}$, but we deal with that in a single step of normal form.

\begin{remark}
The smallness condition \eqref{condsmall} is assumed to ensure that the domain of definition of the Birkhoff map $\Gamma$ is contained in the domain of definition of the coordinates \eqref{coord}. 
\end{remark}



\begin{proof}

\noindent\textbf{Step one.} We first eliminate the term
$
\mathcal{K}_1^{*}
$ from $\mathcal{K}_1$.
We consider a Hamiltonian (recall that $f$ in \eqref{def:f})
\[
F^{(3, \le 1)}(w, \overline{w})=f(\| \Pi_{\Lambda} w \|_{L^2}^2)\,\mathcal{F}^{(3, \le 1)}(w, \overline{w}), \quad \mathcal{F}^{(3, \le 1)}(w, \overline{w})=\sum_{\substack{\mathcal{A}_{3, \le 1}\\ \pi(\sigma, n)=0}} [\mathcal{F}^{(3, \le 1)}]^{\sigma_1 \sigma_2 \sigma_3}_{n_1 n_2 n_3} \,w^{\sigma_1}_{n_1} w^{\sigma_2}_{n_2} w^{\sigma_3}_{n_3}
\]
such that
\begin{equation}\label{latte3}
	\{  F^{(3, \le 1)}, \mathcal{H}^{(2)}\}+\mathcal{K}_1^{*}=f(\| \Pi_{\Lambda} \cdot \|_{L^2}^2) \,\left(\{  \mathcal{F}^{(3, \le 1)}, \mathcal{H}^{(2)}\}+\e^{-2}\,\mathcal{G}^{(3, \le 1)} \right)=0.
\end{equation}
We point out that, since
\begin{equation}\label{nonzero}
	\inf_{w\in B_{\ell^1}(\eta)} |f(\| \Pi_{\Lambda} w \|_{L^2}^2) |>0,
\end{equation}
the homological equation \eqref{latte3} is equivalent to
\[
\{  \mathcal{F}^{(3, \le 1)}, \mathcal{H}^{(2)}\}+\e^{-2}\,\mathcal{G}^{(3, \le 1)}=0.
\]
Hence we set
\[
[\mathcal{F}^{(3, \le 1)}]^{\sigma_1 \sigma_2 \sigma_3}_{n_1 n_2 n_3} = \frac{\e^{-2}\,[G^{(3, \le 1)}]^{\sigma_1 \sigma_2 \sigma_3}_{n_1 n_2 n_3}}{\sigma_1 \omega(n_1)+\sigma_2 \omega(n_2)+\sigma_3 \omega(n_3)}\,, \quad (n_1, n_2, n_3)\in\mathcal{A}_{3, \le 1}, \quad \sigma_1 n_1+\sigma_2 n_2+\sigma_3 n_3=0,
\]
where the denominators never vanish by Lemma \ref{lem:smalldiv}.
We have that $\bral G^{(3, \le 1)} \brar\lesssim 1$, then by using Lemma \ref{lem:smalldiv} we have
\begin{equation}\label{gratta}
	\bral \mathcal{F}^{(3, \le 1)} \brar\lesssim \delta^2.
\end{equation}
By Remark \ref{rem:finite} $F^{(3, \le 1)}$ defines a finite dimensional system of analytic ODEs. Since
\begin{equation}\label{b:vf3}
	\sup_{w\in  B_{\ell^1}(\eta)} \| X_{F^{(3, \le 1)}}(w)\|_{\ell^1}\lesssim \bral \mathcal{F}^{(3, \le 1)} \brar\, \eta^2\lesssim \delta^2 \eta^2 \qquad \forall w\in B_{\ell^1}(\eta),
\end{equation}
we can extend the local time of existence up to $1$ taking $\eta$ small enough.
Let us call $\phi_3$ the time one flow map of $F^{(3, \le 1)}$.
By adding and subtracting the term $\e^{-2} H^{(4, \le 1)}$ (which is part of $H^{(4)}$ in \eqref{H}) and using Lie series, the new Hamiltonian is given by 
\begin{align*}
	\mathcal{H}\circ \phi_3=\mathcal{H}^{(2)}+\mathcal{K}_1^{>}+\mathcal{K}_2^{>}+\e^{-2} H^{(4, \le 1)}+\mathcal{H}^{(4, \geq 2)}+\mathcal{R}_*,
\end{align*}
where $\mathcal{R}_*=\sum_{k=0}^3 \mathcal{R}_{*, k}$
\begin{align*}
	\mathcal{R}_{*, 0}&=\mathcal{K}_2^*,\\
	\mathcal{R}_{*, 1}&=\mathcal{H}^{(4, \le 1)}-\e^{-2} H^{(4, \le 1)},\\
	\mathcal{R}_{*, 2}&=\int_0^1 \{  F^{(3, \le 1)}, \mathcal{K}+\mathcal{H}^{(4)} \}\circ \phi^t_3\,dt,\\
	\mathcal{R}_{*, 3}&=\frac{1}{2}\int_0^1 (1-t) \{  F^{(3, \le 1)}, \{F^{(3, \le 1)}, \mathcal{H}^{(2)} \} \}\circ \phi^t_3\,dt.
\end{align*}

Regarding the term $\mathcal{R}_{*, 0}$, we first note that (recall the definition of $\mathcal{P}^{(2)}$ in \eqref{cincin}) 
\[
|\mathcal{P}^{(2)}(w, \overline{w})|\lesssim\,\| w \|_{L^2}^2.
\]
Thus for all $\tau\in [0, 1]$ and $w\in B_{\ell^1}(\eta)$
\[
|(1-\tau)\| w \|_{L^2}^2+\tau \mathtt{G}(w)|\lesssim \| w \|_{L^2}^2\lesssim\, \eta^2.
\]
By imposing the smallness condition \eqref{condsmall}, using a bound on the derivative of $f$ as in \eqref{billy} and the fact that the $\ell^1$ norm controls the $\ell^2$ norm we can ensure that (recall the definition of $g$ in \eqref{cincin})
\begin{equation}\label{b:g}
	| g(w) |\lesssim 1 \qquad \forall w\in B_{\ell^1}(\eta).
\end{equation}
By \eqref{cincin}, \eqref{giala2} and the fact that $\Lambda\subset \Z_q^2$ we have that, for all $w\in B_{\ell^1}(\eta)$,
\[
\| X_{\mathcal{P}^{(2,0)}} (w)\|_{\ell^1}\lesssim \delta^4 \| w \|_{\ell^1}^2, \qquad \| X_{\mathcal{P}^{(2,1)}} (w)\|_{\ell^1}\lesssim \delta^2 \| w \|_{\ell^1}^2.
\]
Since $\bral \e^{-2} \mathcal{G}^{(3)} \brar \lesssim \e^{-2}$, by Lemma \ref{lem:young} we have, for all $w\in B_{\ell^1}(\eta)$,
\[
\| X_{\mathcal{R}_{*, 0}}(w)\|_{\ell^1}\lesssim \e^{-2} \delta^2 \| w \|_{\ell^1}^5.
\]
Concerning the term $\mathcal{R}_{*, 1}$, let us write $H^{(4, \le 1)}(w, \overline{w})=\sum_{\mathcal{A}_{4, \le 1}} [H^{(4, \le 1)}]_{n_1 \dots n_4}^{\sigma_1 \dots \sigma_4} w_{n_1}^{\sigma_1} \dots w_{n_4}^{\sigma_4}$. Recalling Proposition \ref{prop:S} and \eqref{change}, let us denote by 
\[
q^+_n=\td(n) w_n+\te(n) \overline{w_{-n}}, \qquad {q^-_n}=\te (n) w_{-n}+\td(n) \overline{w_n}.
\]
To simplify the notation we write $C_{n_1 \dots n_4}^{\sigma_1 \dots \sigma_4}=[H^{(4, \le 1)}]_{n_1 \dots n_4}^{\sigma_1 \dots \sigma_4}$.
Then
\begin{align*}
	\e^2\mathcal{R}_{*, 1}&= H^{(4, \le 1)} ( S (w, \overline{w}) )-H^{(4, \le 1)}(w, \overline{w})\\
	&=\sum_{\substack{\mathcal{A}_{4, \le 1}\\\pi(\sigma, n)=0}} C_{n_1 \dots n_4}^{\sigma_1 \dots \sigma_4} q_{n_1}^{\sigma_1} \dots q_{n_4}^{\sigma_4}-\sum_{\substack{\mathcal{A}_{4, \le 1}\\\pi(\sigma, n)=0}} C_{n_1 \dots n_4}^{\sigma_1 \dots \sigma_4} w_{n_1}^{\sigma_1} \dots w_{n_4}^{\sigma_4}\\
	&=\sum_{\substack{\mathcal{A}_{4, \le 1}\\\pi(\sigma, n)=0}} C_{n_1 \dots n_4}^{\sigma_1 \dots \sigma_4} (q_{n_1}^{\sigma_1}-w_{n_1}^{\sigma_1}) q_{n_2}^{\sigma_2} q_{n_3}^{\sigma_3} w_{n_4}^{\sigma_4}+\sum_{\substack{\mathcal{A}_{4, \le 1}\\\pi(\sigma, n)=0}} C_{n_1 \dots n_4}^{\sigma_1 \dots \sigma_4} w_{n_1}^{\sigma_1} (q_{n_2}^{\sigma_2}-w_{n_2}^{\sigma_2})  q_{n_3}^{\sigma_3} w_{n_4}^{\sigma_4}\\
	&+\sum_{\substack{\mathcal{A}_{4, \le 1}\\\pi(\sigma, n)=0}} C_{n_1 \dots n_4}^{\sigma_1 \dots \sigma_4} w_{n_1}^{\sigma_1} w_{n_2}^{\sigma_2} (q_{n_3}^{\sigma_3}-w_{n_3}^{\sigma_3})   w_{n_4}^{\sigma_4}+\sum_{\substack{\mathcal{A}_{4, \le 1}\\\pi(\sigma, n)=0}} C_{n_1 \dots n_4}^{\sigma_1 \dots \sigma_4} w_{n_1}^{\sigma_1} w_{n_2}^{\sigma_2}  w_{n_3}^{\sigma_3} (q_{n_4}^{\sigma_4}-w_{n_4}^{\sigma_4}) .
\end{align*}
By \eqref{giala2} we have, for the indices $n_i$ involved in $\mathcal{R}_{*, 1}$ and for all $w\in \ell^1$,
\begin{align*}
	|q_{n_i}^{+}-w_{n_i}^{+}|&\le |\mathtt{d}(n_i)-1|\, |w_{n_i}| +|\mathtt{e}(n_i)|\,|\overline{w_{-n_i}}|\lesssim \delta^2 \| w \|_{\ell^1},\\
	|q_{n_i}^--w_{n_i}^-|&\le  |\mathtt{d}(n_i)-1|\, |\overline{w_{n_i}}| +|\mathtt{e}(n_i)|\,|{w_{-n_i}}|\lesssim \delta^2 \| w \|_{\ell^1}.
\end{align*}
By Young's inequality, for all $w\in B_{\ell^1}(\eta)$,
\[
\| X_{\mathcal{R}_{*, 1}}(w)\|_{\ell^1} \lesssim \e^{-2} \delta^2 \| w \|_{\ell^1}^3.
\]
By Lemma \ref{lem:stimeBNF} (using that the $\ell^1$ norm controls the $\ell^2$ norm and $\eta$ is small enough) and \eqref{gratta} we have the following estimates 
\[
\| X_{\mathcal{R}_{*, 2}}(w)\|_{\ell^1} \lesssim \e^{-2} \delta^2  \| w \|_{\ell^1}^3, \qquad
\| X_{\mathcal{R}_{*, 3}}(w)\|_{\ell^1}\lesssim \e^{-2} \delta^2  \| w \|_{\ell^1}^3, \qquad \forall w\in B_{\ell^1}(\eta).
\]

We conclude that
\[
\| X_{\mathcal{R}_*}(w)\|_{\ell^1}\lesssim \e^{-2} \delta^2  \| w \|_{\ell^1}^3 \qquad \forall w\in B_{\ell^1}(\eta).
\]
\noindent\textbf{Step two.} Recalling the Hamiltonian $H^{(4)}$ in \eqref{H} we consider the following splitting
\[
H^{(4, \le 1)}=H^{(4, 0)}+H^{(4, 1)}
\]
where
\begin{align*}
	H^{(4, 0)}&= \widetilde{H}_{TM}-h^{(4, 0)}, \qquad 
	 \widetilde{H}_{TM}(w, \overline{w})=H_{TM} (w, \overline{w}) +\left(8\pi^2-\frac{3}{4\pi^2}  \right) {} \| \Pi_{\Lambda} w \|_{L^2}^4,\\
	h^{(4, 0)}&=- \frac{1}{2\pi^2} \| w \|_{L^2}^2 \int_{\T^2} \Re(w^2)\,dx=-4\pi^2 \left(\sum_{n\in\Lambda} |w_n|^2\right) \left(\sum_{n\in\Lambda} (w_n w_{-n}+\overline{w_n w_{-n}})\right)
\end{align*}
and
\begin{align*}
	H^{(4, 1)}&=H_I^{(4, 1)}+H_{II}^{(4, 1)}\\
	H_I^{(4, 1)}(w, \overline{w})&={4\pi^2} \sum_{\substack{\mathcal{A}_{4, 1}\\ n_1-n_2+n_3-n_4=0}} w_{n_1} \overline{w_{n_2}} w_{n_3} \overline{w_{n_4}}\\
	H_{II}^{(4, 1)}(w, \overline{w})&=-{4\pi^2} \left(\sum_{n\neq 0, n\in \Lambda} |w_n|^2\right) \left(\sum_{\substack{n\neq 0,\\ n\notin \Lambda\,\,\mathrm{or}\,\,-n\notin\Lambda}} (w_n w_{-n}+\overline{w_n w_{-n}})\right).
\end{align*}
We now eliminate $H^{(4, 1)}$ and $h^{(4, 0)}$. We look for a homogeneous Hamiltonian of degree four
$
G^{(4, \le 1)}=G^{(4, 0)}+G^{(4, 1)}
$
such that
\[
\begin{cases}
	\{  G^{(4, 0)}, \mathcal{H}^{(2)}\}+\e^{-2}\,h^{(4, 0)}=0,\\
	\{  G^{(4, 1)}, \mathcal{H}^{(2)}\}+\e^{-2}\,H^{(4, 1)}=0.
\end{cases}
\]

We consider the splitting $G^{(4,  1)}=G_I^{(4,  1)}+G_{II}^{(4,  1)}$ where
\[
\begin{cases}
	\{  G_I^{(4, 1)}, \mathcal{H}^{(2)}\}+\e^{-2} H_I^{(4, 1)}=0,\\
	\{  G_{II}^{(4, 1)}, \mathcal{H}^{(2)}\}+\e^{-2} H_{II}^{(4, 1)}=0.
\end{cases}
\]
To solve the first equation we define
\[
(G^{(4, 1)}_I)_{n_1 \dots n_4}^{+-+-}=\frac{\e^{-2}\,[H_I^{(4, 1)}]^{+-+-}_{n_1 \dots n_4}}{ \omega(n_1)-\omega(n_2)+\omega(n_3)-\omega(n_4)},  \quad (n_1, n_2, n_3, n_4)\in\mathcal{A}_{4, 1}, \quad \sigma_1 n_1+\dots+\sigma_4 n_4=0.
\]
By Lemma \ref{lem:smalldiv2} the denominators above do not vanish.
Since $\bral H_I^{(4, 1)}\brar\lesssim 1$, by Lemma \ref{lem:smalldiv2} we have that
\[
\bral G^{(4, 1)}_I \brar \lesssim \delta^2.
\]

To solve the second equation we proceed in the same way. The denominators appearing in this case are simply $\pm \omega(n)$ (where we used that $\omega(n)=\omega(-n)$). Hence, by using that $|\omega(n)|\gtrsim \e q^2$ (see \eqref{om}), we have the same estimate on the generator
\[
\bral G^{(4, 1)}_{II}\brar \lesssim \delta^2.
\]
To eliminate $h^{(4, 0)}$ we reason exactly as for $H^{(4, 1)}_{II}$.
Then we conclude that $\bral G^{(4, \le 1) } \brar\lesssim \delta^2$ and
\begin{equation}\label{b:vf4}
	\sup_{w\in  B_{\ell^1}(\eta)} \| X_{G^{(4, \le 1)}}(w)\|_{\ell^1}\lesssim \bral {G}^{(4, \le 1)} \brar\, \eta^3\lesssim \delta^2 \eta^3 \qquad \forall w\in B_{\ell^1}(\eta).
\end{equation}
By Remark \ref{rem:finite} $G^{(4, \le 1)}$ defines a finite dimensional system of analytic ODEs. Arguing as in \emph{step one} we can consider the time-one flow map $\phi_4$ of $G^{(4, \le 1)}$ by taking $\eta$ small enough.
The new Hamiltonian can be computed by Lie series
\begin{align*}
	\mathcal{H}\circ \phi_3\circ \phi_4=\mathcal{H}^{(2)}+\mathcal{K}_1^{>}+\mathcal{K}_2^{>}+\e^{-2} \widetilde{H}_{TM}+\mathcal{H}^{(4, \geq 2)}+\mathcal{R},
\end{align*}

where
\begin{align*}
	\mathcal{R}&=\mathcal{R}_*+\widetilde{\mathcal{R}}+\widehat{\mathcal{R}},\\
	\widetilde{\mathcal{R}}&=\int_0^1 \{ G^{(4, \le 1)}, \e^{-2} H^{(4, \le 1)}+\mathcal{H}^{(4, \geq 2)}+ \mathcal{K}_1^{>}+\mathcal{K}_2^{>}+\mathcal{R}_* \}\circ \phi_4^t\,dt,\\
	\widehat{\mathcal{R}}&=\frac{1}{2} \int_0^1 (1-t) \{ G^{(4, \le 1)}, \{ G^{(4, \le 1)}, \mathcal{H}^{(2)}\}  \} \,\circ \phi_4^t\,dt.
\end{align*}
By Lemma \ref{lem:young} and Lemma \ref{lem:stimeBNF} we have the following estimates, for all $w\in B_{\ell^1}(\eta)$,
\[
\| X_{\widetilde{\mathcal{R}}}(w) \|_{\ell^1}\lesssim \e^{-2} \delta^4 \| w \|_{\ell^1}^4, \qquad \| X_{\widehat{\mathcal{R}}}(w) \|_{\ell^1}\lesssim \e^{-2} \delta^2 \| w \|_{\ell^1}^5.
\]
We conclude that, for $\eta$ small enough,
\[
\| X_{\mathcal{R}}(w)\|_{\ell^1}\lesssim \e^{-2} \delta^2  \| w \|_{\ell^1}^3 \qquad \forall w\in B_{\ell^1}(\eta),
\]
that is the estimate \eqref{bound:remainder}.
We set $\Gamma:=\phi_4 \circ \phi_3$. Then by the bounds \eqref{b:vf3} and \eqref{b:vf4} we obtain the estimate \eqref{closetoId}.

\end{proof}

\subsection{Effective finite dimensional model}\label{sec:TM}
We rename
\begin{equation}\label{def:N}
\begin{aligned}
	\mathcal{N}&=\mathcal{N}_0+\mathcal{N}_2,\\
	\e^2 \mathcal{N}_0&= \widetilde{H}_{TM}=H_{TM}+G \qquad G:=\left(8\pi^2-\frac{3}{4\pi^2}  \right)\| \Pi_{\Lambda} \cdot \|^4_{L^2},, \qquad  \mathcal{N}_2=\mathcal{K}^{>}+\mathcal{H}^{(4, \geq 2)}.
\end{aligned}
\end{equation}

For our purposes, it is convenient to get rid of the quadratic part of the Hamiltonian $\mathcal{H}^{(2)}$ by passing to the rotating coordinates
\begin{equation}\label{def:rotcoord}
w_n=r_n\,e^{\mathrm{i} \omega(n) t} \qquad n\in \Z^2\setminus\{0\}.
\end{equation}
We obtain an equation with a time-dependent Hamiltonian
\begin{equation}\label{bfH}
\mathbf{H}=\mathcal{N}+\mathcal{P}'(t)+\mathcal{Q}'(t)+\mathcal{R}'(t), 
\end{equation}
where
\begin{equation}\label{giugi}
\begin{aligned}
	\mathcal{P}'(t)&=\mathcal{N}_0'(t)-\mathcal{N}_0, \qquad \mathcal{N}_0'(t)=\mathcal{N}_0(r_n\,e^{\mathrm{i} \omega(n) t}),
	\\
	\mathcal{Q}'(t)&=\mathcal{N}_2'(t)-\mathcal{N}_2, \qquad \mathcal{N}_2'(t)=\mathcal{N}_2(r_n\,e^{\mathrm{i} \omega(n) t}),\\
	\mathcal{R}'(t)&=\mathcal{R}(r_n\,e^{\mathrm{i} \omega(n) t}).
\end{aligned}
\end{equation}

We recall the finite dimensional subspace
\[
\mathcal{U}_{\Lambda}=\{ w_n=0\,\,\,n\notin \Lambda \}.
\]
In the next lemma we prove that the Hamiltonian vector field $X_{\mathcal{N}}$ leaves invariant $\mathcal{U}_{\Lambda}$.
\begin{lemma}\label{lem:inv}
Let $\eta_0>0$ be the one given by Proposition \ref{prop:wbnf}. Then, for all $\eta\in (0, \eta_0)$, the following holds.
We have
$
X_{\mathcal{N}}\colon \mathcal{U}_{\Lambda}\cap B_{\ell^1}(\eta)\to  \mathcal{U}_{\Lambda}
$.

\end{lemma}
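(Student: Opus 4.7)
The plan is to decompose $\mathcal{N} = \mathcal{N}_0 + \mathcal{K}_1^{>} + \mathcal{K}_2^{>} + \mathcal{H}^{(4,\geq 2)}$ and verify the invariance summand-by-summand. Since $\mathcal{U}_\Lambda = \{w : w_n = 0 \text{ for all } n \notin \Lambda\}$ is a linear subspace and the $n$-th component of any Hamiltonian vector field $X_F$ is a scalar multiple of $\partial_{\overline{w_n}} F$, it suffices to show that for each of the four summands $F$ and every index $n \notin \Lambda$ one has $\partial_{\overline{w_n}} F(w) = 0$ whenever $w \in \mathcal{U}_\Lambda \cap B_{\ell^1}(\eta)$.

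The first and last summands are immediate. By construction, both $H_{TM}$ (see \eqref{hamTM}) and $G = \bigl(8\pi^2 - \tfrac{3}{4\pi^2}\bigr)\|\Pi_\Lambda w\|_{L^2}^4$ depend only on the modes $\{w_m\}_{m \in \Lambda}$, so $\partial_{\overline{w_n}}\mathcal{N}_0 \equiv 0$ for every $n \notin \Lambda$. For $\mathcal{H}^{(4,\geq 2)}$, by Definition \ref{hom:ham} each monomial carries at least two factors $w^{\sigma_i}_{n_i}$ with $n_i \notin \Lambda$; differentiating in $\overline{w_n}$ destroys at most one such factor, so in every resulting monomial at least one factor $w^{\sigma}_{m}$ with $m \notin \Lambda$ remains, and such a factor vanishes on $\mathcal{U}_\Lambda$.

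The two $\mathcal{K}^{>}$ pieces are more subtle because of the non-polynomial scalar prefactors $f(\|w\|_{L^2}^2)$, $f(\|\Pi_\Lambda w\|_{L^2}^2)$ and $g(w)$. For $\mathcal{K}_1^{>}$ as in \eqref{K1}, I apply the product rule. For $n \notin \Lambda$ and $w \in \mathcal{U}_\Lambda$, both $\partial_{\overline{w_n}}\|w\|_{L^2}^2$ and $\partial_{\overline{w_n}}\|\Pi_\Lambda w\|_{L^2}^2$ vanish (they are proportional respectively to $w_n$ and to $w_n\mathbbm{1}_{n \in \Lambda}$, both zero on $\mathcal{U}_\Lambda$), which kills the $f'$-contributions; meanwhile the identity $\|w\|_{L^2}^2 = \|\Pi_\Lambda w\|_{L^2}^2$ valid on $\mathcal{U}_\Lambda$ makes the two surviving $f$-prefactors agree, so what remains is
\[
\partial_{\overline{w_n}}\mathcal{K}_1^{>}(w)\big|_{\mathcal{U}_\Lambda} = \e^{-2}\,f(\|\Pi_\Lambda w\|_{L^2}^2)\,\partial_{\overline{w_n}}\bigl(\mathcal{G}^{(3)} - \mathcal{G}^{(3,\leq 1)}\bigr)(w)\big|_{\mathcal{U}_\Lambda} = \e^{-2}\,f(\|\Pi_\Lambda w\|_{L^2}^2)\,\partial_{\overline{w_n}}\mathcal{G}^{(3,\geq 2)}(w)\big|_{\mathcal{U}_\Lambda},
\]
which vanishes by the same index-counting argument as for $\mathcal{H}^{(4,\geq 2)}$. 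For $\mathcal{K}_2^{>}$ (see \eqref{K2}) each of the three summands has the form $(\text{scalar function of }w)\cdot \mathcal{P}^{(2,a)}\mathcal{G}^{(3,b)}$ with $a + b \geq 2$; the polynomial factor $\mathcal{P}^{(2,a)}\mathcal{G}^{(3,b)}$ already carries at least two factors indexed outside $\Lambda$, and applying the product rule in $\overline{w_n}$ still leaves in every surviving monomial at least one factor whose index is outside $\Lambda$, which vanishes on $\mathcal{U}_\Lambda$, regardless of what $\partial_{\overline{w_n}} g$ may contribute.

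The only non-automatic step is the cancellation for $\mathcal{K}_1^{>}$: one must recognize that the splitting \eqref{K1} has been precisely tailored so that, after restriction to $\mathcal{U}_\Lambda$, the low-support piece $\mathcal{G}^{(3,\leq 1)}$ drops out and only the harmless $\mathcal{G}^{(3,\geq 2)}$ part survives. All other verifications amount to counting how many of a monomial's Fourier indices lie outside $\Lambda$ before and after differentiation, which is mechanical.
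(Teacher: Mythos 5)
Your proof is correct and follows essentially the same route as the paper's: a summand-by-summand verification that $\partial_{\overline{w_n}}\mathcal{N}|_{\mathcal{U}_\Lambda}=0$ for $n\notin\Lambda$, using (i) the fact that $w_n=0$ on $\mathcal{U}_\Lambda$ for $n\notin\Lambda$ kills the $f'$- and $g'$-contributions, (ii) the identity $\|w\|_{L^2}=\|\Pi_\Lambda w\|_{L^2}$ on $\mathcal{U}_\Lambda$ so that the two $f$-prefactors in $\mathcal{K}_1^{>}$ coincide, and (iii) the counting of out-of-$\Lambda$ indices for the $H^{(N,\geq 2)}$-type pieces. The paper regroups $\mathcal{K}_1^{>}$ into $A=\pm f(\|\cdot\|_{L^2}^2)\mathcal{G}^{(3,\geq 2)}$ plus $B=[f(\|\cdot\|_{L^2}^2)-f(\|\Pi_\Lambda\cdot\|_{L^2}^2)]\mathcal{G}^{(3,\le 1)}$ and rationalizes the $f$-difference to exhibit the vanishing numerator $\sum_{n\notin\Lambda}|w_n|^2$; your direct product-rule computation avoids that detour with the same content. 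Worth noting (not a gap): the paper actually establishes the stronger statement that $X_{\mathcal{N}_2}$ vanishes entirely on $\mathcal{U}_\Lambda$ (all components, including $n\in\Lambda$), which is used later in Proposition~\ref{prop:approxarg} to conclude $X_{\mathcal{Q}'}(r^\lambda)=0$; your index-counting actually yields that too (for $n\in\Lambda$ none of the out-of-$\Lambda$ factors is removed), but you only record the weaker invariance claim, which is all the lemma as stated requires.
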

\begin{proof}
We need to prove that $X_{\mathcal{N}_2}$ vanishes on $\mathcal{U}_{\Lambda}$.

We first show that $X_{\mathcal{K}^{>}}$ vanishes on $\mathcal{U}_{\Lambda}\cap B_{\ell^1}(\eta)$. Recalling \eqref{K1}, \eqref{K2}, we have that
\begin{align*}
	\mathcal{K}^{>}&=\mathcal{K}_1^{>}+\mathcal{K}_2^{>}\\
	\mathcal{K}_1^{>}&=\underbrace{-f(\| \cdot  \|_{L^2}^2)\,\mathcal{G}^{(3, \geq 2)}}_A+\underbrace{[f(\|  \cdot  \|_{L^2}^2)-f(\| \Pi_{\Lambda} \cdot  \|_{L^2}^2)] \,\mathcal{G}^{(3, \le 1)}}_{B}\,.
\end{align*}
The vector field of $\mathcal{K}_2^{>}$ vanishes because it is the product of the function $g$ and homogenous Hamiltonians of degree $5$ of type $H^{(5, \geq 2)}$ (recall Definition \ref{hom:ham} and Remark \ref{rem:fund}).

Now we analyze the term $\mathcal{K}^>_1$. If $n\notin \Lambda$ then
\begin{align*}
	\partial_{\overline{w_n}} A &= f(\|  \cdot  \|_{L^2}^2)\,(\partial_{\overline{w_n}}\,\mathcal{G}^{(3, \geq 2)})+f'(\| \cdot \|_{L^2}^2)\, \mathcal{G}^{(3, \geq 2)} \,w_n,\\
	\partial_{\overline{w_n}} B &=-f'(\| \cdot \|_{L^2}^2) \mathcal{G}^{(3, \le 1)} \,w_n+[f(\| \Pi_{\Lambda} \cdot  \|_{L^2}^2)-f(\| \cdot  \|_{L^2}^2)] \,\partial_{\overline{w_n}} \,\mathcal{G}^{(3, 1)}.
\end{align*}
Therefore, the restriction of $\partial_{\overline{w_n}} A$ to $\mathcal{U}_{\Lambda}$ is zero, because $w_n=0$ and $\partial_{\overline{w_n}}\,\mathcal{G}^{(3, \geq 2)}$ vanishes on $\mathcal{U}_{\Lambda}$. Concerning $\partial_{\overline{w_n}} B$, the first term clearly vanishes because $w_n=0$, while for the second we use that
\[
f(\|  w  \|_{L^2}^2)-f(\| \Pi_{\Lambda} w  \|_{L^2}^2)=\frac{\sum_{n\notin \Lambda} |w_n|^2}{f(\| \Pi_{\Lambda} w  \|_{L^2}^2)+f(\| w  \|_{L^2}^2)}.
\]
The numerator vanishes on $\mathcal{U}_{\Lambda}$, while the denominator never vanishes because of condition \eqref{condsmall} and the fact that $\| w \|_{L^2}^2\le \| w \|^2_{\ell^1}\le \eta^2$. By Remark \ref{rem:fund} the vector field of $\mathcal{H}^{(4, \geq 2)}$ vanishes on $\mathcal{U}_{\Lambda}$. Thus we have the thesis.  
\end{proof}

We want to show that the Hamiltonian system defined by $\mathcal{N}$ possesses an energy cascade orbit, namely a solution that transfers energy from the modes of the first generation to the modes of the last one\footnote{For technical reasons we consider transfers of energy among the modes of $\Lambda_3$ and $\Lambda_{N-2}$ instead of $\Lambda_1$ and $\Lambda_N$}. 
We know from \cite{CKSTT} that this kind of orbit exists for the Hamiltonian system defined by $H_{TM}$ in \eqref{hamTM}. We observe that $\{ H_{TM}, \| \Pi_{\Lambda} \cdot \|_{L^2} \}=0$ and, since $G=h( \| \Pi_{\Lambda} \cdot \|_{L^2})$ for a smooth function $h$, $\{ G,  \| \Pi_{\Lambda} \cdot \|_{L^2}\}=0$.
Therefore $\| \Pi_{\Lambda} \cdot \|_{L^2}$ is a constant of motion of $\mathcal{N}$. Then we will construct an energy cascade orbit for $\mathcal{N}$ by considering a phase shift of the orbit of the Toy model.

We first reduce considerably the number of degrees of freedom of the restriction of $\mathcal{N}$ to $\mathcal{U}_{\Lambda}$ by using a symmetry of the system.

\begin{lemma}[Intragenerational equality]
Consider the subspace
\[
\widetilde{\mathcal{U}_{\Lambda}}:=\left\{ r\in \mathcal{U}_{\Lambda} : r_{n}=r_{n'}\,\,\,\forall n, n'\in \Lambda_j\,\,\text{for some}\,\,j\in \{ 1, \dots, \mathtt{N} \} \right\}
\]
where all the members of a generation take the same value. Then $\widetilde{\mathcal{U}_{\Lambda}}$ is invariant under the flow of $\mathcal{N}$.
\end{lemma}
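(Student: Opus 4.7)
The plan is to reduce the statement to a purely combinatorial symmetry check on $H_{TM}$. First, I will split $X_{\mathcal{N}} = X_{\mathcal{N}_0} + X_{\mathcal{N}_2}$. By the previous lemma, $X_{\mathcal{N}_2}$ vanishes on $\mathcal{U}_{\Lambda}$, and since $\widetilde{\mathcal{U}_{\Lambda}} \subset \mathcal{U}_{\Lambda}$, the contribution of $\mathcal{N}_2$ vanishes on $\widetilde{\mathcal{U}_\Lambda}$. Hence it suffices to show that the flow of $\varepsilon^2 \mathcal{N}_0 = H_{TM} + G$ preserves intragenerational equality, and I will treat the two summands separately.

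For $G = \left(8\pi^2 - \tfrac{3}{4\pi^2}\right) \| \Pi_\Lambda \cdot \|_{L^2}^4$, the Hamiltonian is a smooth function of the single real quantity $\|\Pi_\Lambda r\|_{L^2}^2$. Therefore on $\mathcal{U}_\Lambda$ its vector field is $\dot r_n = -\mathrm{i}\,\lambda(r)\,r_n$ for $n\in\Lambda$ with a common real multiplier $\lambda(r) = 2\left(8\pi^2 - \tfrac{3}{4\pi^2}\right)\|\Pi_\Lambda r\|_{L^2}^2$ independent of $n$; this is a uniform phase rotation and manifestly preserves $\widetilde{\mathcal{U}_{\Lambda}}$.

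The core step is $X_{H_{TM}}$. Using Properties $(P5)$--$(P6)$ of the set $\Lambda$, every non-trivial solution of $n_1 - n_2 + n_3 - n_4 = 0$ with all $n_i \in \Lambda$ comes from a nuclear family, so on $\mathcal{U}_\Lambda$ one can group terms and write
\begin{equation*}
H_{TM}(r,\bar r) = -4\pi^2 \sum_{n\in\Lambda} |r_n|^4 + 4\pi^2 \sum_{\mathfrak{F}} K_{\mathfrak{F}}\,\bigl(r_{a}\overline{r_{b}}r_{c}\overline{r_{d}} + \overline{r_{a}}r_{b}\overline{r_{c}}r_{d}\bigr),
\end{equation*}
where the outer sum runs over nuclear families $\mathfrak{F} = \{a,c;b,d\}$ with spouses $a,c \in \Lambda_i$ and children $b,d \in \Lambda_{i+1}$, and $K_{\mathfrak{F}}$ is a combinatorial multiplicity independent of $\mathfrak{F}$. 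Now fix $n \in \Lambda_j$. By Properties $(P2)$ and $(P3)$, $n$ belongs to at most two families: one as a parent, with unique spouse $\sigma(n)\in\Lambda_j$ and children in $\Lambda_{j+1}$, and one as a child, with unique sibling in $\Lambda_j$ and parents in $\Lambda_{j-1}$. Assuming intragenerational equality $r_m = b_\ell$ for every $m \in \Lambda_\ell$, the contribution of the parent-family to $\partial_{\overline{r_n}} H_{TM}$ is $K\, b_{j+1}^2\,\overline{b_j}$, which is symmetric in $n$ and $\sigma(n)$; similarly the child-family contributes $K\,b_{j-1}^2\,\overline{b_j}$, symmetric in $n$ and its sibling. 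Adding the diagonal contribution $-8\pi^2|b_j|^2 b_j$, we find that $\partial_{\overline{r_n}} H_{TM}$ depends only on $j$ and on $(b_{j-1},b_j,b_{j+1})$, not on the specific $n \in \Lambda_j$. Hence $\dot r_n = \dot r_{n'}$ for any $n, n' \in \Lambda_j$, proving that $\widetilde{\mathcal{U}_\Lambda}$ is forward-invariant under $X_{H_{TM}}$.

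The only delicate point is the combinatorial bookkeeping in the last step: one must invoke $(P5)$--$(P6)$ to ensure that no other rectangles contribute, and $(P2)$--$(P3)$ to guarantee the uniqueness of spouse/sibling/parents/children so that the partial derivative at $n$ is literally the same formula as at any other member of $\Lambda_j$. Since the set $\Lambda$ is constructed precisely to enjoy these symmetries, no genuine obstacle arises, and the lemma follows by combining the three contributions above.
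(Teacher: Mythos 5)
Your proof is correct and follows essentially the same approach as the paper: reduce to $\mathcal{N}_0 = \varepsilon^{-2}(H_{TM}+G)$ by observing that $X_{\mathcal{N}_2}$ vanishes on $\mathcal{U}_\Lambda$, treat $G$ as a uniform ($n$-independent) phase rotation, and then invoke the structure of $\Lambda$ for $H_{TM}$. The only difference is that the paper simply cites \cite{CKSTT} for the $H_{TM}$ invariance (and handles $G$ via a Gronwall argument on $\zeta = r_n - r_{n'}$), whereas you explicitly reproduce the $(P2)$--$(P6)$ combinatorics; both are fine, and your version is a bit more self-contained on that step while the paper's Gronwall argument for $G$ is a mildly different (but equivalent) way of verifying tangency.
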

\begin{proof}
This result has been proved for $H_{TM}$ in \cite{CKSTT}. Hence it is sufficient to show that $\widetilde{\mathcal{U}_{\Lambda}}$ is left invariant also by the flow of $G$. We show that for $G$ we are able to prove even a stronger result.
The equations of motion of $G$ are
\[
\mathrm{i} \dot{r}_n=2 \left(8\pi^2-\frac{3}{4\pi^2}  \right) \left(\sum_{k\in \Lambda} |r_k|^2\right)\,r_n \qquad n\in \Lambda.
\]
Consider any $n, {n'}\in \Lambda$ and set $\zeta:=r_n-r_{n'}$. Then $\dot{\zeta}=2 \left(8\pi^2-\frac{3}{4\pi^2}  \right) \left(\sum_{k\in \Lambda} |r_k|^2 \right)\,\zeta$. Hence, if $r_n=r_{n'}$ at some time, this relation remains true for all time (it can be seen, for instance, by using Gronwall lemma). This shows that the set $\{ r_n=r_{n'}\}$ is left invariant by the flow of $G$. Hence, a fortiori, this holds for the set $\widetilde{\mathcal{U}_{\Lambda}}$.
\end{proof}
We set
\begin{equation*}
b_i=r_n \qquad \text{for any}\qquad n\in \Lambda_i, \quad i=1, \dots, \mathtt{N}.
\end{equation*}
By the time reparametrization $\tau=4\pi^2\e^{-2} t$, the equations of motion of $\mathcal{N}$ restricted on $\widetilde{\mathcal{U}_{\Lambda}}$ reduces to the following system
\begin{equation}\label{eq:toy2}
\dot{b}_i=-\mathrm{i} b_i^2 \overline{b_i}+2\mathrm{i} \overline{b_i} (b^2_{i-1}+b^2_{i+1})+\,2^N \mathrm{i} \left( \sum_{k=1}^N |b_i|^2 \right) \,b_i, \qquad i=1, \dots, \mathtt{N}.
\end{equation}
By the above discussion $\sum_{i=1}^N |b_i|^2$ is a constant of motion for \eqref{eq:toy2}. Hence, by setting
\begin{equation}\label{Bb}
B_i(\tau)=\,e^{ \mathrm{i} (2^N\sum_{i=1}^N |b_i|^2) \tau}\,b_i,  \qquad i=1, \dots, \mathtt{N},
\end{equation}
the \eqref{eq:toy2} transforms into the Toy model system of \cite{CKSTT}, that is
\begin{equation}\label{eq:toy}
\dot{B}_i=-\mathrm{i} B_i^2 \overline{B_i}+2\mathrm{i} \overline{B_i} (B^2_{i-1}+B^2_{i+1}), \qquad i=1, \dots, \mathtt{N}.
\end{equation}
The following result concerns the existence of an orbit of \eqref{eq:toy} which displays a forward energy cascade behavior. 
\begin{theorem}{(Theorem $3$, \cite{GuardiaK12})}\label{thm:orbit}
Fix ${\gamma}>0$ large enough. Then for any large enough $\mathtt{N}$ and 
\[
\nu=\exp(-{\gamma} \mathtt{N}),
\]
there exists a trajectory $B(t)$ of the system \eqref{eq:toy}, $\sigma_0>0$ independent of ${\gamma}, \mathtt{N}$ and $T_0>0$ such that
\begin{align*}
	|B_3(0)|>1-\nu^{\sigma_0},& \qquad |B_j(0)|<\nu^{\sigma_0} \quad\,\,\, \mathrm{for}\,\,j\neq 3,\\
	|B_{\mathtt{N}-1}(T_0)|>1-\nu^{\sigma_0},& \qquad |B_j(T_0)|<\nu^{\sigma_0} \quad  \mathrm{for}\,\,j\neq \mathtt{N}-1.
\end{align*}
Moreover $\| B \|_{L^{\infty}}\lesssim 1$ and there exists a constant $\mathbb{K}>0$ independent of $\mathtt{N}$ such that $T_0$ satisfies
\begin{equation}\label{bound:timeT0}
	0<T_0\le  \mathbb{K}\,\mathtt{N}\,\log(\nu^{-1})=\mathbb{K} {\gamma} \mathtt{N}^2.
\end{equation}
\end{theorem}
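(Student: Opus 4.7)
\emph{Proof proposal.} Since Theorem \ref{thm:orbit} is attributed to \cite{GuardiaK12} (building on \cite{CKSTT}), I would formally just invoke it, but here is the proof strategy I would carry out from scratch. First, I observe that the mass $\mathcal{M}:=\sum_{j=1}^{\mathtt N}|B_j|^2$ is a conserved quantity for the system \eqref{eq:toy}, so we may normalize to work on the sphere $\{\mathcal{M}=1\}$. The system also admits many invariant subspaces coming from setting some coordinates to zero; crucially, for every $j\in\{1,\dots,\mathtt{N}-1\}$ the two-complex-dimensional slice
\[
\Sigma_{j}:=\{B:B_i=0\;\text{for}\;i\neq j,j+1\}
\]
is invariant, and on $\Sigma_j\cap\{\mathcal{M}=1\}$ the dynamics can be integrated explicitly using the two invariants $|B_j|^2+|B_{j+1}|^2=1$ and a phase relation.

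The heart of the construction is the existence of a heteroclinic orbit on each $\Sigma_j$ connecting the ``invariant circles''
\[
T_j:=\{B\in\Sigma_j : |B_j|=1,\; B_{j+1}=0\}, \qquad T_{j+1}=\{B\in\Sigma_j : |B_{j+1}|=1,\; B_{j}=0\},
\]
which are hyperbolic inside $\Sigma_j\cap\{\mathcal M=1\}$. An elementary phase plane analysis (reducing to a scalar ODE for $|B_{j+1}|^2\in(0,1)$) shows that the transit time from a $\nu$-neighborhood of $T_j$ to a $\nu$-neighborhood of $T_{j+1}$ is of order $\log(\nu^{-1})$, with exponential rate of attraction/repulsion controlled by an $O(1)$ constant. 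The next step is the \emph{shadowing argument}: I would concatenate these heteroclinic pieces to produce a true trajectory of the full system \eqref{eq:toy} that starts $\nu^{\sigma_0}$-close to $T_3$ and reaches $\nu^{\sigma_0}$-close to $T_{\mathtt{N}-1}$. For this I fix a family of transversal sections to each $T_j$ and apply a quantitative $\lambda$-lemma (Inclination Lemma) together with a backward shooting/implicit function scheme to correct the trajectory at each transition so that it enters the local stable manifold of $T_{j+1}$ along the correct heteroclinic direction.

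The main obstacle, and the technical point where \cite{GuardiaK12} improves on \cite{CKSTT}, is controlling the \emph{cumulative} shadowing error across $\mathtt{N}$ transitions. A naive application of the $\lambda$-lemma would force $\nu$ to be exponentially small in $\mathtt{N}$ at each step, yielding an additional exponential blow-up in the final smallness. The improved scheme uses the uniform hyperbolicity of $T_j$ with constants independent of $\mathtt{N}$, carefully chosen sections of size $\sim\nu^{\sigma_0}$, and a fixed-point argument to absorb the error in a single implicit-function step per transition, so that the cumulative loss is only a factor $\mathtt{N}$. Summing the individual transit times then produces the bound $T_0\lesssim\mathtt{N}\log(\nu^{-1})=\mathbb K\gamma\mathtt{N}^2$ in \eqref{bound:timeT0}, while the uniform bound $\|B\|_{L^\infty}\lesssim 1$ follows from the mass conservation $\mathcal{M}\equiv 1$.
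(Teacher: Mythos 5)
The paper does not prove Theorem~\ref{thm:orbit}: it is cited verbatim as Theorem~3 of \cite{GuardiaK12}, and the authors use it as a black box. You recognize this, so formally nothing is missing; the rest of your write-up is an attempted reconstruction of the external proof, which I comment on below.

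Your sketch captures the global architecture (mass conservation, the two-generation invariant slices $\Sigma_j$, invariant circles $T_j$, heteroclinic connections, log transit time, shadowing of the chain), but it contains an imprecision that is in fact the central technical obstacle in both \cite{CKSTT} and \cite{GuardiaK12}. The circles $T_j$ are \emph{not} normally hyperbolic in $\{\mathcal{M}=1\}$: at $T_j$ only the two neighboring generations $B_{j-1},B_{j+1}$ give rise to genuinely hyperbolic directions, while every $B_i$ with $|i-j|\geq 2$ is linearly \emph{neutral} (the cubic vector field vanishes to first order in those variables at $T_j$). Thus $T_j$ sits in a large center manifold, and a straightforward $\lambda$-lemma/Inclination-Lemma shadowing argument, as you describe it, does not apply. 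What actually has to be proved is that an orbit entering near $T_j$ along the incoming heteroclinic and departing toward $T_{j+1}$ does not accumulate mass in the many center directions while it lingers near $T_j$ for a time of order $\log(\nu^{-1})$; this drift control in the neutral directions is where the constants degrade and where both papers spend most of their effort. Relatedly, your characterization of the Guardia--Kaloshin improvement (``cumulative loss is only a factor $\mathtt{N}$'') is misleading: in \cite{CKSTT} the tolerance essentially gets squared at each transition, forcing a doubly exponentially small $\nu$ in $\mathtt{N}$, whereas \cite{GuardiaK12} redo the local analysis near each $T_j$ so that the loss is only multiplicative, allowing $\nu=e^{-\gamma \mathtt{N}}$ with a fixed exponent and yielding $T_0\lesssim \mathtt{N}\log(\nu^{-1})=\gamma\mathtt{N}^2$. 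Your sketch produces the right scaling for $T_0$ but for the wrong reason, because it implicitly assumes uniform normal hyperbolicity that the Toy model does not have.
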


\begin{remark}\label{rem:polimi}
As noticed in \cite{GuardiaK12}, $\gamma$ can be taken as $\tilde{\gamma} (s-1)$ with $\tilde{\gamma}\gg 1$.
\end{remark}
We consider the solution $b(t)$ of \eqref{eq:toy2} with initial datum $b(0)=B(0)$, where $B(t)$ is the orbit provided by the above theorem. By \eqref{Bb}
\begin{equation}\label{eco}
b(\tau)=e^{-\mathrm{i} \mathtt{M} \tau} B(\tau), \qquad \mathtt{M}:=2^\mathtt{N} \sum_{i=1}^\mathtt{N} |B_i(0)|^2
\end{equation}
and Theorem \ref{thm:orbit} holds by replacing $B(\tau)$ with $b(\tau)$ and system \eqref{eq:toy} with \eqref{eq:toy2}.

We observe that the equation \eqref{eq:toy2} is invariant by the scaling $b(\tau)\mapsto \lambda^{-1} b(\lambda^{-2} \tau)$. Hence, we consider the scaled solution
\begin{equation}\label{lambda}
b^{\lambda}(\tau)=\lambda^{-1} b(\lambda^{-2} \tau), \qquad \lambda^{-1}\ll \eta_0,
\end{equation}
where $\eta_0$ is introduced in Proposition \ref{prop:wbnf} and $b(t)$ is the solution \eqref{eco}.

Then
\begin{equation}\label{def:rlambda}
r^{\lambda}(t, x)=\sum_{n\in \Z_q^2} r^{\lambda}_n(t)\,e^{\mathrm{i} n \cdot x}, \qquad r^{\lambda}_n(t):=\begin{cases}
	b_i^{\lambda}(4\pi^2\e^{-2} t) \qquad n\in\Lambda_i, \quad i=1, \dots, \mathtt{N},\\
	0 \qquad \,\, \,\, \quad n\notin \Lambda
\end{cases}
\end{equation}
is a solution of $\mathcal{N}$ in \eqref{def:N}.
The life-span of $r^{\lambda}(t)$ is $[0, T]$ where
\begin{equation}\label{def:timeT}
T:=\e^2 \lambda^{2} T_0=\e^2 \lambda^2 {} \mathbb{K}\,\gamma\, \mathtt{N}^2,
\end{equation}
where, by an abuse of notation, we absorbed the factor $\frac{1}{4\pi^2}$ into $\mathbb{K}$.
By Theorem \ref{thm:orbit} and arguing as in Lemma $9.2$ in \cite{GuardiaHP16}, we have
\begin{equation}\label{bound:base}
\sup_{t\in [0, T]} \| r^{\lambda}(t) \|_{\ell^1}\lesssim \mathtt{N} 2^{\mathtt{N}-1} \lambda^{-1}.
\end{equation}

Later we will impose that $\lambda$ is such that 
\[
\mathtt{N} 2^{\mathtt{N}-1} \lambda^{-1}\ll \eta_0
\]
in order to remain in the domain of definition of the Birkhoff coordinates given in Proposition \ref{prop:wbnf}. 

\subsection{Approximation argument}\label{sec:approx} In this section we show that solutions of the Hamiltonian system \eqref{bfH} with initial data $\ell^1$-close to $r^{\lambda}(0)$ remain $\ell^1$-close to the trajectory $r^{\lambda}(t)$ over the time interval $[0, T]$, with $T$ in \eqref{def:timeT}. The deviation between these trajectories is essentially caused by the Hamiltonian terms, different by $\mathcal{N}$, appearing in \eqref{bfH}. Since $\mathcal{N}$ is not a resonant Hamiltonian, in the sense that it does not Poisson commute with $\mathcal{H}^{(2)}$, the most dangerous term is $\mathcal{P}'(t)$ in \eqref{bfH}. Actually, $\mathcal{Q}'(t)$ is harmless because its vector field vanishes on the support of $r^{\lambda}$ by Remark \ref{rem:fund}, and $\mathcal{R}'(t)$ has a zero at the origin of high enough order. 

\smallskip

The next lemma shows that $\mathcal{N}$ can be made \emph{close} to resonant by modulating the size of the modes in the $\Lambda$ set through the choice of the parameter $q$.

\medskip

\begin{lemma}\label{lem:U0}
Let $(n_1, n_2, n_3, n_4)\in (\Z^2_q)^4$ be a nuclear family. Then
\[
|\omega(n_1)-\omega(n_2)+\omega(n_3)-\omega(n_4)|\lesssim \e^{-2} \delta^2.
\]
\end{lemma}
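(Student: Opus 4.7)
The plan is to mimic the strategy of Lemmata \ref{lem:smalldiv} and \ref{lem:smalldiv2}: write $\omega(n)$ as $|n|^2$ plus lower-order corrections, observe that the leading pieces cancel exactly along a rectangle, and control the remainder using the scaling $n \in \Z^2_q$.

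First I would expand $\omega(n) = |n|^2 + 2\e^{-2} m + r_n$, recalling from \eqref{exp} that $|r_n| \lesssim m^2/(\e^4 |n|^2)$. Applied to the alternating sum this gives
\[
\omega(n_1) - \omega(n_2) + \omega(n_3) - \omega(n_4) = \bigl(|n_1|^2 - |n_2|^2 + |n_3|^2 - |n_4|^2\bigr) + \bigl(r_{n_1} - r_{n_2} + r_{n_3} - r_{n_4}\bigr),
\]
since the constant $2\e^{-2}m$ pieces cancel in alternating sign. The key observation, central to the construction of $\Lambda$, is that the first bracket vanishes identically for a rectangle: since $(n_1, n_2, n_3, n_4)$ is a nuclear family, $n_1$ and $n_3$ are opposite vertices, so $n_1 + n_3 = n_2 + n_4$, and the parallelogram law yields $|n_1|^2 + |n_3|^2 = |n_2|^2 + |n_4|^2$.

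It remains to estimate the second bracket. Each $n_i$ lies in $\Lambda \subset \Z^2_q$, so $|n_i| \geqslant q$, and therefore
\[
|r_{n_i}| \lesssim \frac{m^2}{\e^4 |n_i|^2} \lesssim \frac{m^2}{\e^4 q^2} = m^2\,\e^{-2}\,(\e^{-1}q^{-1})^2 = m^2\,\e^{-2}\,\delta^2.
\]
Summing the four contributions gives $|\omega(n_1) - \omega(n_2) + \omega(n_3) - \omega(n_4)| \lesssim \e^{-2}\delta^2$, with the implicit constant depending on $m$, as required.

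There is no real obstacle; the proof is essentially a one-line computation once the rectangle identity is invoked. The only point that deserves attention is making sure the cancellation is exact: the constant $2\e^{-2}m$ cancels because it is independent of $n$ and the signs alternate, and the $|n|^2$ terms cancel by the geometric property of rectangles (which is precisely why rectangles show up as the relevant resonant configurations for NLS). Everything else is a remainder controlled by $|n|^{-2} \leqslant q^{-2}$.
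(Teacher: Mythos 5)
Your proof is correct and follows essentially the same route as the paper: expand $\omega(n)$ via \eqref{exp}, observe that the constant $2\e^{-2}m$ and the quadratic parts both cancel in the alternating sum (the latter by the rectangle identity $|n_1|^2-|n_2|^2+|n_3|^2-|n_4|^2=0$ for a nuclear family), and bound the residual $r_{n_i}$ using $|n_i|\geqslant q$ and $\delta=\e^{-1}q^{-1}$. You spell out slightly more detail than the paper (the cancellation of the constant piece is left implicit there, as is the parallelogram-law justification of the rectangle identity), and you correctly flag the $m$-dependence of the implied constant, which the paper's $\lesssim$ silently absorbs.
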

\begin{proof}
Since $|n_1|^2-|n_2|^2+|n_3|^2-|n_4|^2=0$, then by \eqref{exp}
\begin{align*}
	|\omega(n_1)-\omega(n_2)+\omega(n_3)-\omega(n_4)|&\le |\omega(n_1)-\omega(n_2)+\omega(n_3)-\omega(n_4)\\
	&- (|n_1|^2-|n_2|^2+|n_3|^2-|n_4|^2)|\lesssim \frac{\e^{-4}}{\min_{i=1, \dots, 4} |n_i|^2}\lesssim \e^{-4} q^{-2}.
\end{align*}

\end{proof}

We are now in position to prove the following.

\begin{proposition}{(Approximation lemma)}\label{prop:approxarg}
Fix $\epsilon\in (0, \frac{1}{4})$ and $\sigma>0$. Then for $\mathtt{N}>0$ large enough the following holds.
Set
\begin{equation}\label{cond:final}
	\lambda=2^{(1+\sigma) \mathtt{N}}, \qquad \delta\le \exp(-2^{(2+\sigma) \mathtt{N}})
\end{equation}
and let $r(t)$ be a solution of $\mathbf{H}$ in \eqref{bfH} such that 
\begin{equation}\label{assump:initial}
	\| r(0)-r^{\lambda}(0)\|_{\ell^{1}}\le \lambda^{-2} \delta.
\end{equation}
Then
\[
\sup_{t\in [0, T]} \| r(t)-r^{\lambda}(t) \|_{\ell^1}\le \lambda^{-1-\epsilon}\,.
\]
\end{proposition}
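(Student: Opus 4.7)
I would run a Gronwall/continuity bootstrap for $\xi(t) := r(t) - r^\lambda(t)$, $y(t) := \|\xi(t)\|_{\ell^1}$. Using $\dot r^\lambda = X_\mathcal{N}(r^\lambda)$ from Lemma \ref{lem:inv}, the equation of motion reads
\begin{equation*}
\dot\xi = \bigl[X_\mathcal{N}(r) - X_\mathcal{N}(r^\lambda)\bigr] + X_{\mathcal{P}'(t)}(r) + X_{\mathcal{Q}'(t)}(r) + X_{\mathcal{R}'(t)}(r),
\end{equation*}
and I split each of the last three terms as $X_\bullet(r^\lambda) + [X_\bullet(r)-X_\bullet(r^\lambda)]$. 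The first structural observation is that $X_{\mathcal{Q}'(t)}(r^\lambda) = 0$: every monomial in $\mathcal{N}_2 = \mathcal{K}^{>} + \mathcal{H}^{(4,\geq 2)}$ either has at least two indices outside $\Lambda$ or carries the annihilating factor $\|w\|_{L^2}^2-\|\Pi_\Lambda w\|_{L^2}^2$ (cf.\ the proof of Lemma \ref{lem:inv}), so $\mathcal{N}_2\equiv 0$ on $\mathcal{U}_\Lambda$ and hence its Hamiltonian vector field vanishes there; since the rotation \eqref{def:rotcoord} preserves $\mathcal{U}_\Lambda$, the claim follows. Consequently $X_{\mathcal{Q}'(t)}(r)$ is a pure Lipschitz quantity in $\xi$, and the Lipschitz constants of $X_\mathcal{N}$, $X_{\mathcal{P}'(t)}$, $X_{\mathcal{Q}'(t)}$, $X_{\mathcal{R}'(t)}$ are all controlled by $\lesssim \e^{-2}(\|r\|_{\ell^1}+\|r^\lambda\|_{\ell^1})^2$ via Lemma \ref{lem:young} and Lemma \ref{lem:stimeBNF}.

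The key estimate is on $X_{\mathcal{P}'(t)}(r^\lambda)$. Writing $\mathcal{P}'(t) = \mathcal{N}_0'(t)-\mathcal{N}_0$ with $\e^2\mathcal{N}_0 = H_{TM}+G$, the quartic monomials $|w_n|^4$, $|w_n|^2|w_m|^2$ of $H_{TM}$ and $G$ carry no phase under the rotation and cancel in $\mathcal{P}'(t)$; only nuclear-family quartets contribute, with coefficient multiplied by $e^{\mathrm{i}\Omega t}-1$ where $\Omega = \omega(n_1)-\omega(n_2)+\omega(n_3)-\omega(n_4)$. Lemma \ref{lem:U0} gives $|\Omega|\lesssim \e^{-2}\delta^2$, so $|e^{\mathrm{i}\Omega t}-1|\leq \e^{-2}\delta^2 t$, and Lemma \ref{lem:young} yields
\begin{equation*}
\|X_{\mathcal{P}'(t)}(r^\lambda)\|_{\ell^1}\lesssim \e^{-4}\delta^2\, t\, M^3,\qquad M := \mathtt{N}\,2^{\mathtt{N}}\,\lambda^{-1}\gtrsim \|r^\lambda(t)\|_{\ell^1},
\end{equation*}
using \eqref{bound:base}. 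The remainder is controlled by \eqref{bound:remainder}: $\|X_{\mathcal{R}'(t)}(r^\lambda)\|_{\ell^1}\lesssim \e^{-2}\delta^2 M^3$.

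Bootstrapping $\|r(t)\|_{\ell^1}\leq 2M$ (consistent while $y\leq \lambda^{-1-\epsilon}\ll M$) produces
\begin{equation*}
\dot y \leq C\e^{-2} M^2\, y + C\e^{-4}\delta^2 M^3\, t + C\e^{-2}\delta^2 M^3,
\end{equation*}
so Gronwall with $T\sim \e^2\lambda^2\gamma \mathtt{N}^2$ from \eqref{def:timeT} gives
\begin{equation*}
y(T)\leq \bigl(y(0)+C\delta^2\,\gamma^2\,\mathtt{N}^7\, 2^{3\mathtt{N}}\,\lambda\bigr)\exp\bigl(C\gamma\, \mathtt{N}^4\, 2^{2\mathtt{N}}\bigr).
\end{equation*}
With $\lambda = 2^{(1+\sigma)\mathtt{N}}$ and $\delta\leq \exp(-2^{(2+\sigma)\mathtt{N}})$ from \eqref{cond:final}, the bound $\log\delta^{-1}\geq 2^{\sigma \mathtt{N}}\cdot 2^{2\mathtt{N}}$ eventually dominates $\gamma \mathtt{N}^4 2^{2\mathtt{N}}$ for $\mathtt{N}$ large, so both the initial error $y(0)\leq \lambda^{-2}\delta$ and the forcing integral are super-exponentially crushed by $\delta$, yielding $y(T)<\lambda^{-1-\epsilon}$ strictly. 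A standard continuation argument closes the bootstrap and gives the claim.

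The \textbf{main obstacle} is that the naive Gronwall exponent $\e^{-2}M^2 T\sim \gamma \mathtt{N}^4 2^{2\mathtt{N}}$ is huge, and by itself would force only exponential-in-$\mathcal{K}$ time bounds, as in \cite{GuardiaHHMP19}. Upgrading to polynomial ones requires two structural inputs: the $\e^{-2}\delta^2$ smallness of the quartic $\Lambda$-resonance defects from Lemma \ref{lem:U0}, and the $\delta^2$ smallness of the Birkhoff remainder \eqref{bound:remainder} produced by the partial normal form of Proposition \ref{prop:wbnf}. The double-exponentially small choice $\delta\leq \exp(-2^{(2+\sigma)\mathtt{N}})$ in \eqref{cond:final} is calibrated precisely to dominate the Gronwall exponential, and in the subsequent step this condition on $q$ (equivalently $\delta$) will be translated into the polynomial time bound \eqref{bound:growth}.
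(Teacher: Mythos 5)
Your proof proposal follows the same overall strategy as the paper: set $\xi(t)=r(t)-r^{\lambda}(t)$, run a Gronwall/bootstrap on $\|\xi\|_{\ell^1}$, use Lemma \ref{lem:inv} to get $X_{\mathcal{Q}'}(r^{\lambda})=0$, use Lemma \ref{lem:U0} together with $|e^{\mathrm{i}\Omega t}-1|\leq |\Omega| t$ to bound $X_{\mathcal{P}'}(r^{\lambda})$, and kill the Gronwall exponential with the double-exponentially small choice of $\delta$. The paper separates the linearisation and second-order Taylor remainder of $X_{\mathcal{N}}$ into two separate pieces $Z_1,Z_2$, whereas you fold them into one Lipschitz term, but that is cosmetic.

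There is however a quantitative gap in your Lipschitz estimate. You claim that the Lipschitz constants of $X_{\mathcal{N}},X_{\mathcal{P}'},X_{\mathcal{Q}'},X_{\mathcal{R}'}$ near the orbit $r^{\lambda}$ are all $\lesssim \e^{-2}(\|r\|_{\ell^1}+\|r^{\lambda}\|_{\ell^1})^2$, which gives a Gronwall coefficient $\e^{-2}M^2$. This is correct for the quartic Hamiltonians $\mathcal{N}_0$, $\mathcal{P}'$ and $\mathcal{H}^{(4,\geq 2)}$, but $\mathcal{N}_2$ (hence $\mathcal{Q}'$) also contains the \emph{cubic} term $\mathcal{K}^{>}$: the leading piece $\e^{-2}f(\|\cdot\|_{L^2}^2)\mathcal{G}^{(3,\geq 2)}$ is a degree-$3$ Hamiltonian (the prefactor $f$ is $O(1)$ at the origin, not vanishing there), so its vector field is quadratic and its Lipschitz constant scales like $\e^{-2}\|\cdot\|_{\ell^1}$, one power lower than your claim. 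The dominant Gronwall coefficient is therefore $\e^{-2}\mu\lambda^{-1}=\e^{-2}M$, not $\e^{-2}M^2$. The corresponding Gronwall exponent over the time $T=\e^2\lambda^2\mathbb{K}\gamma\mathtt{N}^2$ is $\mu\lambda\mathbb{K}\gamma\mathtt{N}^2 \sim \gamma\mathtt{N}^3 2^{(2+\sigma)\mathtt{N}}$, which is \emph{$\lambda$-dependent} and substantially larger than your $\lambda$-independent value $\gamma\mathtt{N}^4 2^{2\mathtt{N}}=\e^{-2}M^2T$. Note that $2^{(2+\sigma)\mathtt{N}}$ is exactly the quantity matched by the hypothesis $\delta\leq\exp(-2^{(2+\sigma)\mathtt{N}})$ in \eqref{cond:final}; your underestimate would misleadingly suggest that the weaker $\delta\lesssim\exp(-2^{2\mathtt{N}})$ (i.e.\ $\sigma$-independent) already suffices, and more importantly hides the real reason the cubic part $\mathcal{K}^{>}$ had to be treated with care in Proposition \ref{prop:wbnf}. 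Once the Lipschitz of $\mathcal{Q}'$ is corrected to $\e^{-2}M$, the bootstrap still closes under \eqref{cond:final} and the rest of your argument goes through as you outlined.
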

\begin{proof}
We temporarily use the following notation
\[
\mu:=\mathtt{N} 2^{\mathtt{N}-1}.
\]
We first observe that by the choice of $\lambda$ and \eqref{bound:base}
\[
\sup_{t\in [0, T]}\| r^{\lambda}(t) \|_{\ell^1}\lesssim \lambda^{-1} \mu\lesssim \mathtt{N} 2^{-\sigma \mathtt{N}}.
\]
Thus, taking $\mathtt{N}$ large enough, we can ensure that the above quantity is smaller than $\eta_0$ in Proposition \ref{prop:wbnf}, hence the orbit $r^{\lambda}$ is contained in the domain of definition of the Birkhoff coordinates over the time interval $[0, T]$.
We set $\xi(t)=r(t)-r^{\lambda}(t)$. Then
\[
\dot{\xi}=Z_0+Z_1+Z_2+Z_3 +Z_4,
\]
where, recalling \eqref{giugi},
\begin{align*}
	Z_0&=X_{\mathcal{R}'}(r^{\lambda}+\xi),\\
	Z_1&=D X_{\mathcal{N}}(r^{\lambda}) [\xi],\\
	Z_2&=X_{\mathcal{N}}(r^{\lambda}+\xi)-X_{\mathcal{N}}(r^{\lambda})-D X_{\mathcal{N}}(r^{\lambda}) [\xi],\\
	Z_3&=X_{\mathcal{P}'}(r^{\lambda})+X_{\mathcal{Q}'}(r^{\lambda}),\\
	Z_4&=(X_{\mathcal{P}'}(r^{\lambda}+\xi)-X_{\mathcal{P}'}(r^{\lambda}))+(X_{\mathcal{Q}'}(r^{\lambda}+\xi)-X_{\mathcal{Q}'}(r^{\lambda})).
\end{align*}
By the differential form of Minkowski's inequality we obtain
\[
\frac{d}{d t} \| \xi \|_{\ell^1}\le \|Z_0\|_{\ell^1}+\|Z_1\|_{\ell^1}+\|Z_2\|_{\ell^1}+\|Z_3\|_{\ell^1}+\|Z_4\|_{\ell^1}.
\]
We assume temporarily the following \emph{bootstrap assumption}
\[
\| \xi(t)\|_{\ell^1}\le 2   \lambda^{-(1+\epsilon)} \qquad \forall t\in [0, 
T],
\]
where $T$ is the time introduced in \eqref{def:timeT}.
Note that \eqref{assump:initial} implies that the above inequality holds at least for time $t=0$. A posteriori we will prove 
an improved estimate on the same time interval, and therefore we will drop the bootstrap assumption.

First we need a priori estimates on the terms $Z_i$, $i=1, 2, 3, 4$, defined above.  {We shall 
	use repeatedly Lemma \ref{lem:young}} and the bootstrap assumption. We remark that the change to rotating coordinates 
\eqref{def:rotcoord} does not affect the bounds on the $\ell^1$ norm.

\medskip

\noindent\textbf{Bound for $Z_0$.} By \eqref{bound:base} and the bootstrap assumption
\[
\left\|r^\lambda+\xi\right\|_{\ell^1}\lesssim \mu 
\lambda^{-1}+\lambda^{-(1+\epsilon)}\lesssim \mu \lambda^{-1}.
\]
Hence by the estimate \eqref{bound:remainder} on the Birkhoff remainder
\[
\| Z_0 \|_{\ell^1}\lesssim \e^{-2} \delta^2  \mu^3 
\lambda^{-3}.
\]
\noindent\textbf{Bound for $Z_1$.} By \eqref{bound:base} and considering that $\mathcal{N}$ is a homogenous Hamiltonian of degree $4$ we have 
\[
\| Z_1 \|_{\ell^1}\lesssim \e^{-2}  \mu^2 \lambda^{-2} \| \xi \|_{\ell^1}.
\]

\noindent\textbf{Bound for $Z_2$.} By 
\eqref{bound:base} and using the bootstrap assumption we have
\[
\| Z_2 \|_{\ell^1}\lesssim \e^{-2} 
 \mu \lambda^{-1} \| \xi\|_{\ell^1}^2\lesssim \e^{-2}
\mu\, \lambda^{-2-\epsilon}\, \| \xi \|_{\ell^1}.
\]

\noindent\textbf{Bound for $Z_3$.} We have $X_{\mathcal{Q}'}(r^{\lambda})=0$ because the vector field of $\mathcal{Q}'$ vanishes on $\mathcal{U}_{\Lambda}$. 
Concerning $X_{\mathcal{P}'}(r^{\lambda})$ and recalling the expression of $\mathcal{N}_0$ in \eqref{def:N}, we have that the most dangerous term to bound is, up to constant factors, 
\[
\e^{-2}\sum_{n\in \Lambda} \sum_{\substack{(n_1, n_2, n_3, n)\,\mathrm{is\,a}\\\mathrm{nuclear\,\,family}}} r^{\lambda}_{n_1} \overline{r^{\lambda}_{n_2}} r^{\lambda}_{n_3}\,(1-\exp[\mathrm{i} ( \omega(n_1)-\omega(n_2)+\omega(n_3)-\omega(n)t])\,e^{\mathrm{i} n x}.
\]

Then, by Lemma \ref{lem:U0} and using that $|e^{\mathrm{i} a t}-1|\le |a \,t|$ for all $t, a\in \R$, we have
\[
\| Z_3(t) \|_{\ell^1}\lesssim \e^{-4} \delta^2 \mu^3\,  \lambda^{-3}\,t
\]
for all times $t$ for which $Z_3$ is defined.

\noindent\textbf{Bound for $Z_4$.} By \eqref{bound:base} we have
\[
\| Z_4 \|_{\ell^1}\lesssim   
 \e^{-2} \mu \lambda^{-1}\, \| \xi \|_{\ell^1},
\]
where we have used that $|e^{\mathrm{i} a}-1|\le 2$ for all $a\in \R$. 

We observe that, among $Z_1, Z_2$ and $Z_4$, the latter has the worst estimate.

\medskip

By collecting the previous estimates we have
\[
\frac{d}{dt} \| {\xi}(t) \|_{\ell^1}\le C_0(\e^{-2} \mu \lambda^{-1} \| \xi(t) 
\|_{\ell^1} + \mu^3\, \lambda^{-3}\,\e^{-4}\,\delta^2\,t+\e^{-2} \delta^2  \mu^3 
\lambda^{-3}),
\]
for some constant $C_0>0$. Then, by Gronwall Lemma and  
\eqref{assump:initial},
\begin{align*}
	\| \xi(t) \|_{\ell^1}
	&\le  \left(\lambda^{-2} \delta+ 
	\mu^3\, \lambda^{-3}\,\e^{-4}\,\delta^2\,t^2+\e^{-2} \delta^2  \mu^3 
	\lambda^{-3} t\right)\,\exp 
	\left(C_0\, \mu \e^{-2} \lambda^{-1} t \right).
\end{align*}

Then, for $t\in [0, T]$, where $T=\e^2\, \lambda^2 {}\mathbb{K} \gamma \mathtt{N}^2$ is the time defined in \eqref{def:timeT}, one has  
\begin{align*}
	\| \xi(t) \|_{\ell^1}
	&\le  \left(\lambda^{-2} \delta+ 
	\mathtt{N}^7 2^{3 \mathtt{N}}\, \lambda\,\delta^2\,\mathbb{K}^2 {\gamma}^2+\delta^2  \mathtt{N}^5 2^{3 \mathtt{N}} 
	\lambda^{-1} \mathbb{K} {\gamma}  \right)\,\,\exp(C_0 \mathbb{K} {\gamma} \mathtt{N}^3 2^{(2+\sigma) \mathtt{N}-1}).
\end{align*}
We need to prove that the right hand side of the above inequality is strictly less than $2\lambda^{-(1+\epsilon)}$. Hence we impose the following conditions
\begin{align}
	\delta\lambda^{-2} \exp(C_0 \mathbb{K} {\gamma} \mathtt{N}^3 2^{(2+\sigma) \mathtt{N}-1})\le \frac{1}{2}\,\lambda^{-1-\epsilon}  \label{cond1}\\
	\delta^2\,\mathtt{N}^7 2^{3 \mathtt{N}}\, \lambda\,\mathbb{K}^2 {\gamma}^2\,\exp(C_0 \mathbb{K} {\gamma} \mathtt{N}^3 2^{(2+\sigma) \mathtt{N}-1})\le \frac{1}{2}\,\lambda^{-1-\epsilon}  \label{cond2}\\
	\delta^2  \mathtt{N}^5 2^{3 \mathtt{N}}  \lambda^{-1} \mathbb{K} {\gamma}\,\exp(C_0 \mathbb{K} {\gamma} \mathtt{N}^3 2^{(2+\sigma) \mathtt{N}-1})\le\frac{1}{2}\,\lambda^{-1-\epsilon}\,.  \label{cond3} 
\end{align}
Thanks to the assumptions \eqref{cond:final} the above conditions are satisfied, provided that $\mathtt{N}>0$ is taken large enough. Hence we have the improved estimate
\[
\| \xi( t) \|_{\ell^1}\le \frac{3}{2}  \lambda^{-(1+\epsilon)} \qquad \forall t\in [0, T].
\]
Therefore, we can drop the bootstrap assumption. This concludes the proof.
\end{proof}

\subsection{Conclusion of the proof of Theorem \ref{thm:weak}}

\begin{lemma}\label{lem:ratio}
Fix $s>1$ and take $\gamma$ in Theorem \ref{thm:orbit} as $\gamma=\tilde{\gamma}\, (s-1)$ for $\tilde{\gamma}>0$ large enough. Then, for $\mathtt{N}>0$ large enough and assuming the assumptions of Proposition \ref{prop:approxarg} the following holds.
There exists a solution $w(t)$ of \eqref{HamCal} such that
\begin{equation}\label{ratio}
	\frac{\| w(T) \|_s^2}{\| w(0) \|_s^2}\gtrsim 2^{(s-1) (\mathtt{N}-6)}\,,
\end{equation}
where $T$ is the time introduced in \eqref{def:timeT}. Moreover
\begin{equation}\label{zell1}
	\sup_{t\in [0, T]} \| w(t) \|_{\ell^1}\lesssim \lambda^{-1} \mathtt{N} 2^\mathtt{N}.
\end{equation}
\end{lemma}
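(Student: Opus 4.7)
The plan is to construct $w(t)$ from the toy-model orbit $r^{\lambda}(t)$ of Section \ref{sec:TM} by (i) running the flow of $\mathbf{H}$ with initial datum equal to $r^{\lambda}(0)$, (ii) undoing the rotating-coordinates change \eqref{def:rotcoord} and the Birkhoff map $\Gamma$ of Proposition \ref{prop:wbnf}, and (iii) showing that $w(t)$ inherits the concentration of $r^{\lambda}$ on generation $\Lambda_3$ at time $0$ and on generation $\Lambda_{\mathtt{N}-1}$ at time $T$.

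First, I would invoke Proposition \ref{prop:approxarg} with initial condition $r(0) := r^{\lambda}(0)$, which trivially satisfies \eqref{assump:initial}. This produces a solution $r(t)$ of $\mathbf{H}$ in \eqref{bfH} such that $\sup_{t\in[0,T]}\|r(t)-r^{\lambda}(t)\|_{\ell^1}\leq \lambda^{-1-\epsilon}$. Setting $\tilde{r}_n(t):=r_n(t)\,e^{\mathrm{i}\omega(n)t}$ (so that $\tilde{r}(t)$ solves $\mathcal{H}\circ\Gamma$) and $w(t):=\Gamma(\tilde{r}(t))$, one obtains a solution of \eqref{HamCal} defined at least on $[0,T]$. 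The $\ell^1$ bound \eqref{zell1} follows by combining \eqref{bound:base}, the approximation estimate, and \eqref{closetoId}, upon noting that the rotation preserves Fourier moduli.

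Second, I would evaluate the $H^s$-norm of $r^{\lambda}$ at the endpoints. By \eqref{def:rlambda} and \eqref{eco}, $|r^{\lambda}_n(t)|=\lambda^{-1}|B_i(4\pi^2\e^{-2}t)|$ for $n\in\Lambda_i$, and Theorem \ref{thm:orbit} then yields
\[
\|r^{\lambda}(0)\|_s^2 \,\sim\, \lambda^{-2}\sum_{n\in\Lambda_3}\langle n\rangle^{2s},\qquad \|r^{\lambda}(T)\|_s^2 \,\sim\, \lambda^{-2}\sum_{n\in\Lambda_{\mathtt{N}-1}}\langle n\rangle^{2s},
\]
the contribution of the remaining generations being controlled by $\nu^{2\sigma_0}\mathtt{N}\,e^{s\mathtt{N}}$ via \eqref{bound:gen} and rendered negligible by taking $\tilde{\gamma}$ large enough, as permitted by Remark \ref{rem:polimi}.

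Third, I would upgrade the $\ell^1$-closeness of $r$ to $r^{\lambda}$ into an $H^s$-comparison between $w$ and $r^{\lambda}$. The key structural fact is that $r^{\lambda}$ is Fourier-supported on $\Lambda$, while $\Gamma(\tilde{r})-\tilde{r}$ is supported on a \emph{finite} set $\Lambda^{\ast}\supset \Lambda$ obtained by adjoining to $\Lambda$ the finitely many additional modes produced by the generators $F^{(3,\leq 1)}$ and $G^{(4,\leq 1)}$ through momentum conservation; all elements of $\Lambda^{\ast}$ satisfy $|n|\lesssim q\,3^{\mathtt{N}}R$ thanks to \eqref{def:R}. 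Hence
\[
\|w(t)-r^{\lambda}(t)\|_s \,\lesssim\, |\Lambda^{\ast}|^{1/2}(q\,3^{\mathtt{N}}R)^{s}\Big(\|r(t)-r^{\lambda}(t)\|_{\ell^1}+\delta^2\|\tilde{r}(t)\|_{\ell^1}^{2}\Big),
\]
which, thanks to the aggressive smallness of $\delta$ in \eqref{cond:final}, is much smaller than $\lambda^{-1}(\sum_{n\in\Lambda_3}\langle n\rangle^{2s})^{1/2}$ at $t=0$ and much smaller than $\lambda^{-1}(\sum_{n\in\Lambda_{\mathtt{N}-1}}\langle n\rangle^{2s})^{1/2}$ at $t=T$. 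Therefore $\|w(0)\|_s^2 \lesssim \|r^{\lambda}(0)\|_s^2$ and $\|w(T)\|_s^2\gtrsim \|r^{\lambda}(T)\|_s^2$, and the desired ratio \eqref{ratio} follows from the lower bound of Theorem \ref{thm:gen} on $\sum_{n\in\Lambda_{\mathtt{N}-2}}\langle n\rangle^{2s}/\sum_{n\in\Lambda_3}\langle n\rangle^{2s}$, combined with \eqref{def:R} to pass from $\Lambda_{\mathtt{N}-2}$ to $\Lambda_{\mathtt{N}-1}$ at the cost of a multiplicative constant that accounts for the loss of two units in the exponent.

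The main obstacle is precisely the final comparison: Proposition \ref{prop:approxarg} only delivers control of $r-r^{\lambda}$ in the weak $\ell^1$ topology, while the lemma demands a comparison in the considerably stronger $H^s$ norm. The resolution is the finite and uniformly-bounded-in-magnitude Fourier support of $r^{\lambda}$ and of the Birkhoff correction: one pays a factor $(q\,3^{\mathtt{N}}R)^s$ to upgrade the topology, and the doubly-exponential smallness of $\delta$ enforced by \eqref{cond:final} is calibrated exactly to absorb this factor while keeping the error negligible compared to the main terms on both $\Lambda_3$ and $\Lambda_{\mathtt{N}-1}$.
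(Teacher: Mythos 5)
Your overall strategy is the right one and matches the paper's: take the toy-model orbit $r^{\lambda}(t)$, upgrade it to a true solution $r(t)$ of \eqref{bfH} via Proposition \ref{prop:approxarg}, and then set $w(t)=\Gamma(\{r_n(t)e^{\mathrm{i}\omega(n)t}\}_n)$. Two remarks, one cosmetic and one substantive.

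The cosmetic one: the paper pins down $w(0)=r^{\lambda}(0)$ rather than $r(0)=r^{\lambda}(0)$. This makes $\|w(0)\|_s=\|r^{\lambda}(0)\|_s$ an exact equality (used again in Remark \ref{rem:stessodato}), so no estimate at $t=0$ is needed; the price is that $r(0)=\Gamma^{-1}(r^{\lambda}(0))$ is only \emph{close} to $r^{\lambda}(0)$, and one must check that \eqref{closetoId} makes \eqref{assump:initial} hold. Your variant, $r(0)=r^{\lambda}(0)$, is equally viable because at $t=0$ you have $\tilde r(0)=r^{\lambda}(0)$ supported on $\Lambda$, so $w(0)-r^{\lambda}(0)=\Gamma(r^{\lambda}(0))-r^{\lambda}(0)$ is genuinely finitely supported and the $H^s$ error is absorbed.

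The substantive one is a gap in your third step. The inequality
\[
\|w(t)-r^{\lambda}(t)\|_s \lesssim |\Lambda^{\ast}|^{1/2}(q\,3^{\mathtt{N}}R)^{s}\Big(\|r(t)-r^{\lambda}(t)\|_{\ell^1}+\delta^2\|\tilde r(t)\|_{\ell^1}^{2}\Big)
\]
is false for $t>0$. In the decomposition $w(t)-r^{\lambda}(t)=\big(\Gamma(\tilde r(t))-\tilde r(t)\big)+\big(\tilde r(t)-r^{\lambda}(t)\big)$, only the first bracket is supported on the finite set $\Lambda^\ast$ (because $\Gamma$ acts as the identity outside the finite set of modes touched by the generators $F^{(3,\le 1)}$ and $G^{(4,\le 1)}$). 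The second bracket is not: $r(t)$ solves the full system \eqref{bfH}, whose remainder $\mathcal R'(t)$, together with $\mathcal K^{>}$ and $\mathcal H^{(4,\ge2)}$, couples all modes of $\Z^2_q$. Smallness of $\|\tilde r(t)-r^\lambda(t)\|_{\ell^1}$ therefore gives no control on $\|\tilde r(t)-r^\lambda(t)\|_s$, and your displayed bound cannot be used at $t=T$. (Moreover, even granting the bound, the prefactor $(q\,3^{\mathtt N}R)^s$ overshoots $\sum_{n\in\Lambda_{\mathtt N-1}}\langle n\rangle^{2s}$ by a factor $3^{s\mathtt N}$ that $\lambda^{-\epsilon}$ alone does not absorb.) What is actually needed at $t=T$ is only a \emph{lower} bound, and this is obtained without any $H^s$ comparison: restrict to the finite set $\Lambda_{\mathtt N-1}$, write $\|w(T)\|_s^2\ge\sum_{n\in\Lambda_{\mathtt N-1}}|w_n(T)|^2\langle n\rangle^{2s}$, and bound each $|w_n(T)|$ from below by $|r^{\lambda}_n(T)|-\|r(T)-r^{\lambda}(T)\|_{\ell^1}-\|\Gamma(\tilde r(T))-\tilde r(T)\|_{\ell^1}\gtrsim\lambda^{-1}$, using Theorem \ref{thm:orbit}, Proposition \ref{prop:approxarg} and \eqref{closetoId}. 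This is, up to renaming, the argument of \cite[Lemma 2.8]{Giu} that the paper invokes. Your first and second steps, as well as the derivation of \eqref{zell1}, are correct.
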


\begin{remark}\label{rem:stessodato}
The trajectory $w(t)$ given by the above theorem satisfies $w(0)=r^{\lambda}(0)$, hence the Fourier support of its initial datum is contained in the set $\Lambda$. Moreover, by \eqref{def:R},
\[
\| w(0)\|_s^2=\| r^{\lambda}(0)\|_s^2\sim \lambda^{-2} \,\sum_{n\in \Lambda_3} |n|^{2s}\gtrsim \lambda^{-2} q^{2s} R^{2s} 2^{\mathtt{N}-1}.
\]
\end{remark}

\begin{proof}
The proof follows word by word the proof of Lemma $2.8$ in \cite{Giu} where simply one has to substitute $\mathtt{L}^{1/2}$ with $\delta$. The bound \eqref{zell1} is not stated in Lemma $2.8$ in \cite{Giu}, but it can be deduced from the proof. Indeed, we consider $w(t)$ as the solution of {\eqref{HamCal}} with $w(0)=r^{\lambda}(0)$ and we write
\[
w(t)=\Gamma(\{ r_n\,e^{\mathrm{i} \omega(n) t}  \}_{n\in \Z^2_q}),
\]
where $r(t)=\sum_{n\in \Z^2_q} r_n(t)\,e^{\mathrm{i} n \cdot x}$ is a solution of \eqref{bfH}. Since $\Gamma$ is close to the identity in $\ell^1$ norm (see \eqref{closetoId}) $r(t)$ has initial datum close enough to $r^{\lambda}(0)$ to apply the approximation argument given by Proposition \ref{prop:approxarg}. Hence $r(t)$ satisfies a bound like \eqref{zell1}. Then $w(t)$ satisfies the same bound because the Birkhoff map $\Gamma$ is close to the identity in the $\ell^1$ topology, see \eqref{closetoId}.
\end{proof}

It remains to undo the change of coordinates $S$ given in Proposition \ref{prop:S}.
By \eqref{giala2}, the fact that $\delta\ll 1$ and \eqref{change} we have that
\begin{align*}
|z_n|&=|\mathtt{d}(n) w_n+\mathtt{e}(n) \overline{w_{-n}}|\le |w_n|+|w_{-n}|,
\\
|w_n|&=|\mathtt{d}(n) z_n-\mathtt{e}(n) \overline{z_{-n}}|\le |z_n|+|z_{-n}|.
\end{align*}
Therefore, there exist universal constants $C_1, C_2>0$ such that
\begin{equation}\label{bound:wz}
C_1 \| w \|_s\le \| z \|_s\le C_2 \| w \|_s, \qquad C_1 \| w \|_{\ell^1}\le \| z \|_{\ell^1}\le C_2 \| w \|_{\ell^1}.
\end{equation}
Then, by Lemma \eqref{lem:ratio} there exists a solution of \eqref{ham:pw} satisfying 
\begin{equation}\label{ratio2}
\frac{\| z(T) \|_s^2}{\| z(0) \|_s^2}\gtrsim 2^{(s-1) (\mathtt{N}-6)}\,,
\end{equation}
and
\begin{equation}\label{zell12}
\sup_{t\in [0, T]} \| z(t) \|_{\ell^1}\lesssim \lambda^{-1} \mathtt{N} 2^\mathtt{N}.
\end{equation}

\noindent\textbf{Conclusion of the proof of Theorem \ref{thm:weak}}
Fix $m, \sigma>0$, $s>1$, $\cK> 0$ large enough. We consider $\mathtt{N}>0$ such that
\begin{equation}\label{bobo}
\mathtt{c}\,2^{(s-1) (\mathtt{N}-6)}\,= {\cK^2} \qquad \Rightarrow \qquad \mathtt{N}= \frac{1}{(s-1) \log(2)} \log\left(\frac{\cK^2}{\mathtt{c}}\right)+6,
\end{equation}
for some universal constant $\mathtt{c}>0$.
Note that we can enlarge $\mathtt{N}$ by taking $\cK$ larger.
Set {
\begin{equation}\label{chooseL}
	\lambda=2^{(1+\sigma) \mathtt{N}}=\left(\frac{\mathcal{K}^2}{\mathtt{c}} \right)^{\frac{1+\sigma}{(s-1) \log(2)}} 2^{6 (1+\sigma)}
\end{equation}} 
and $q$ such that 
\[
q\gtrsim \e^{-1} \exp(2^{(2+\sigma) \mathtt{N}}).
\]
In particular, recalling Remark \ref{rem:stessodato}, this bound ensures that
\begin{equation}\label{Poli}
\| w(0)\|^2_s\gtrsim m,
\end{equation}
provided that $\mathtt{N}$ is large enough.
Proposition \ref{prop:approxarg} and
Lemma \ref{lem:ratio} apply, ensuring the existence of a solution $z(t)$ of the Hamiltonian system \eqref{ham:pw} satisfying the norm explosion property \eqref{ratio2}. Recalling \eqref{coord}, we consider the function $\psi(t)$ defined by
\[
\psi(t)=\left(\sqrt{m-\| z(t) \|_{L^2}^2}+z(t)\right)\,e^{\mathrm{i} \theta(t)}, \qquad t\in [0, T],
\]
where $\theta(t)$ solution of \eqref{eq:theta} with $\theta(0)=0$, which is a solution to the cubic NLS equation \eqref{eq:psi}.
Then for $\mathtt{N}$ large enough and by \eqref{zell1}, for all $t\in [0, T]$,
\begin{equation}\label{analisi3}
\begin{aligned}
	\| \psi(t)- \sqrt{m}\,e^{\mathrm{i} \theta(t)}\|_{\ell^1}&=|\sqrt{m-\| z(t) \|^2_{L^2}}-\sqrt{m}|+\| z(t) \|_{\ell^1}=\frac{\| z(t) \|^2_{L^2}}{\sqrt{m-\| z(t) \|^2_{L^2}}+\sqrt{m}}+\| z(t) \|_{\ell^1}\\
	&\lesssim \left(\frac{\| z \|_{L^2}}{\sqrt{m}}+1\right) \lambda^{-1} \mathtt{N} 2^\mathtt{N}\lesssim (\sqrt{m}+1) \mathtt{N} 2^{-\sigma \mathtt{N}}\lesssim  2^{- \frac{\sigma}{2} \mathtt{N}}\lesssim C^{\frac{\sigma}{s-1}} \mathcal{K}^{-\frac{ \sigma \mathtt{N}}{(\mathtt{N}-6)(s-1)}}\\
	&\lesssim \mathcal{K}^{-\frac{ \sigma }{(s-1)}}
\end{aligned}
\end{equation}
where $C>1$ is a universal constant (that we absorbed by taking $N$ large enough). 
By the definition of $\psi(t)$
\begin{equation}\label{bound:zpsi}
\| z(t) \|^2_{H^s}+{m}\geq \| \psi(t) \|^2_{H^s}\geq \| z(t) \|^2_{H^s}.
\end{equation}
By \eqref{Poli} and \eqref{bound:wz}
\[
\| z(0)\|_s^2\gtrsim m.
\]
Then by \eqref{ratio2}, \eqref{bound:zpsi} and the choice of $\mathtt{N}$
\begin{equation}\label{ratiom}
\frac{\| \psi(T) \|_s^2}{\| \psi(0) \|_s^2}\geq \frac{\| z(T)\|_s^2}{\| z(0)\|_s^2+m}\gtrsim \frac{\| z(T)\|_s^2}{\| z(0)\|_s^2} \gtrsim \mathcal{K}^2.
\end{equation}

Let $u(t)$ be the function defined by \eqref{psiu} where $\psi$ is the function defined above. We observe that $\| u(t)\|_{\ell^1}=\| \psi(t) \|_{\ell^1}$ and $\| u(t)\|_{H^s}=\| \psi(t) \|_{H^s}$. Then the bounds \eqref{bound:growth}, \eqref{bound:growth2} hold. Concerning the time $T$ in \eqref{def:timeT}, we have, for $\mathtt{N}$ large enough,
\begin{align*}
T=\lambda^2 T_0&=\e^2 \lambda^2\, {\gamma}\, \mathbb{K}\, \mathtt{N}^2\stackrel{Rem. \ref{rem:polimi}}{=}\e^2 \lambda^2 \tilde{\gamma}\,(s-1)\,\mathbb{K}\,\mathtt{N}^2\\
&\stackrel{\eqref{chooseL}, \eqref{bobo}}{\lesssim} \frac{\e^2}{s-1}\,\left(\frac{\mathcal{K}^2}{\mathtt{c}}\right)^{\frac{2(1+\sigma)}{(s-1) \log(2)}}\,\left(\log\left(\frac{\mathcal{K}^2}{\mathtt{c}}\right)\right)^2\lesssim  \frac{\e^2}{s-1} \, \mathcal{K}^{\frac{3(1+\sigma)}{(s-1)}} .
\end{align*}

We are left to prove the bound \eqref{lastbound}, thus let us fix any constant $\delta_*>0$. We first recall that the $H^1$ norm of the solutions to the cubic NLS equation \eqref{cubicNLS} is controlled for all time by the Hamiltonian \eqref{cubicNLSham}, namely (recall Remark \ref{rem:stessodato})
\begin{align*}
\| u(t)\|_{H^1} &\lesssim\, H(u(0), \bar{u}(0))\lesssim {\e^{-2}} \| u(0)\|_{H^1} \stackrel{\eqref{bound:zpsi}}{\lesssim} {\e^{-2}} \| z(0)\|_{H^1}\stackrel{\eqref{bound:wz}}{\lesssim} {\e^{-2}} \| w(0)\|_{H^1}\lesssim \e^{-2} \| r^{\lambda}(0)\|_{H^1}\\
&\lesssim {\e^{-2}} \| r^{\lambda}(0)\|_{\ell^1} \,\max_{n\in \Lambda}\{ |n| \} \stackrel{\eqref{bound:base}, \eqref{def:R}}{\lesssim} {\e^{-2}} \mathtt{N}\,2^{\mathtt{N}-1}\,3^\mathtt{N}\,\lambda^{-1}\,q\,R.
\end{align*}
Then, recalling \eqref{analisi3}, the bound \eqref{lastbound} is obtained by taking $\lambda>0$ in \eqref{lambda} such that
\[
\left[(\sqrt{m}+1) \lambda^{-1} \mathtt{N} 2^\mathtt{N} \right] {\e^{-2}}  \mathtt{N}\,2^{\mathtt{N}-1}\,3^\mathtt{N}\,\lambda^{-1}\,q\,R\lesssim \delta_* \qquad \Rightarrow \qquad {\e^{-2}}\lambda^{-2} q< \tilde{C}\,\delta_*\, \,e^{-\mathbf{a}^\mathtt{N}}
\]
for some constants $\tilde{C}, \mathbf{a}>0$.
We remark that the scaled time $T$ will be enlarged by the choice of a larger $\lambda>0$. Hence we need to check that the approximation lemma (Proposition \ref{prop:approxarg}) still holds true with this choice of $\lambda$. We note that the conditions \eqref{cond1}, \eqref{cond2}, \eqref{cond3} are satisfied provided that
\[
\lambda^{4/3} {\delta}< C\,e^{-\mathbf{b}^\mathtt{N}}
\]
for some constants $C, \mathbf{b}>0$. The above conditions are compatible, indeed it is sufficient to consider $q$ as 
\[
e^{\mathbf{b}^\mathtt{N}}\,\frac{\lambda^{4/3}}{C}<q<\tilde{C}\,\delta_*\, \,e^{-\mathbf{a}^\mathtt{N}} \lambda^2
\]
and $\lambda=\e^{-5}\,e^{\mathbf{c}^N}$ for $\mathbf{c}>0$ such that
\[
\mathbf{c}^{\mathtt{N}}\geq 2 (\mathbf{a}^\mathtt{N}+\mathbf{b}^\mathtt{N})\,.
\]
The estimate on the time is obtained by using \eqref{def:timeT}.
This concludes the proof of Theorem \ref{thm:weak}.

\appendix

\section{Estimates in Wiener algebra and Sobolev spaces}
In this appendix we collect some useful results, which we used in the proof of Proposition \ref{pr:equiv1} on the equivalence between Sobolev norms of the wave function and the hydrodynamic variables.

We start by recalling that the Wiener space $\ell^1(\T^d)$, defined by \eqref{eq:def_wiener}, is a Banach algebra with respect the pointwise product. Indeed, the Young inequality gives
\begin{equation}\label{eq:algebra}
	\|fg\|_{\ell^1(\T^d)}=\|\widehat{f}*\widehat{g}\|_{\ell^1(\Z^d)}\leqslant\|\widehat{f}\|_{\ell^1(\Z^d)}\|\widehat{g}\|_{\ell^1(\Z^d)}=\|f\|_{\ell^1(\T^d)}\|g\|_{\ell^1(\T^d)}.
\end{equation}
We also point out the basic bound
\begin{equation}\label{eq:emb}
\|f\|_{L^{\infty}(\T^d)}\leqslant\|f\|_{\ell^1(\T^d)}.
\end{equation}
The Wiener algebra behaves well under composition with analytic functions. We have indeed the following result, which is a particular instance of the Levy-Wiener theorem \cite{Wiener,Levy}. For convenience of the reader, we present here a direct proof, based on \cite{Zygmund}.
\begin{proposition}\label{pr:comp-wiener}
	Let $z_0\in\C$, and $h$ be analytic in an open neighborhood of $z_0$. There exists $C>0$ such that the following holds: for any $\delta>0$ sufficiently small, and any $f\in\ell^1(\T^d)$, with
	\begin{equation}\label{small_l1}
	\|f-z_0\|_{\ell^1}\leqslant\delta,
	\end{equation}
	the function $h \circ f$ also belongs to $\mathcal \ell^1(\T^d)$, with
	\begin{equation}\label{comp_small_1}
		\|h \circ f\|_{\ell^1}\leqslant |h(z_0)|+C\delta.
	\end{equation}
\end{proposition}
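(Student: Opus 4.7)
The plan is to prove the statement by expanding $h$ in a Taylor series around $z_0$ and using the Banach algebra structure \eqref{eq:algebra} of $\ell^1(\T^d)$ to control each term. Since $h$ is analytic in an open neighborhood of $z_0$, there exist constants $r>0$ and $M>0$ such that the Taylor expansion
\[
h(z)=\sum_{n=0}^{\infty}\frac{h^{(n)}(z_0)}{n!}(z-z_0)^n
\]
converges absolutely for $|z-z_0|<r$, and by Cauchy's estimates we have $|h^{(n)}(z_0)|/n!\le M r^{-n}$ for every $n\geq 0$.

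Assume $f\in\ell^1(\T^d)$ satisfies \eqref{small_l1} with $\delta\le r/2$, say. The embedding \eqref{eq:emb} yields $\|f-z_0\|_{L^{\infty}}\le \delta < r$, so the range of $f$ lies in the disk where the Taylor series converges, and hence $h\circ f$ is pointwise given by the limit of the partial sums $\sum_{n=0}^{N}\frac{h^{(n)}(z_0)}{n!}(f-z_0)^n$. I would then iterate the algebra property \eqref{eq:algebra} to get $\|(f-z_0)^n\|_{\ell^1}\le \|f-z_0\|_{\ell^1}^n\le \delta^n$, which combined with the Cauchy bound gives
\[
\sum_{n=0}^{\infty}\Big\|\frac{h^{(n)}(z_0)}{n!}(f-z_0)^n\Big\|_{\ell^1}\le |h(z_0)|+M\sum_{n=1}^{\infty}(\delta/r)^n\le |h(z_0)|+\frac{2M}{r}\,\delta.
\]
This shows that the partial sums are Cauchy in $\ell^1(\T^d)$, so they converge in $\ell^1$ to some limit $g$; by \eqref{eq:emb} the convergence is also uniform, hence $g=h\circ f$ pointwise. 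The estimate above then gives \eqref{comp_small_1} with $C:=2M/r$.

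There is no real obstacle here: the argument is a direct two-line calculation once the Taylor expansion and the algebra property are in place. The only point requiring a bit of care is the identification of the $\ell^1$-limit of the partial sums with the pointwise composition $h\circ f$, which is handled by the trivial embedding $\ell^1\hookrightarrow L^{\infty}$. I would state the smallness restriction simply as $\delta\le r/2$, so that $\delta$ stays strictly inside the disk of convergence and the geometric tail is summable with a uniform constant.
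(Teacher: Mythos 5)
Your proof is correct and follows essentially the same route as the paper's: Taylor expansion around $z_0$, the Banach algebra property to bound $\|(f-z_0)^n\|_{\ell^1}\le\delta^n$, and summation of a geometric series. You add two small refinements that the paper leaves implicit -- the explicit Cauchy estimate $|h^{(n)}(z_0)|/n!\le M r^{-n}$ to pin down the constant $C$, and the uniform-convergence argument (via $\ell^1\hookrightarrow L^\infty$) to identify the $\ell^1$-limit with the pointwise composition $h\circ f$ -- but the underlying argument is the same.
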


\begin{proof}
	We can assume without loss of generality $z_0=0$. Let $\{a_k\}_{k\geqslant 0}$ and $R$ be respectively the coefficients and the radius of convergence of the Taylor series of $h$ centered at $0$. Denote moreover by $c^{(k)}_n$, $n\in\Z^d$ and $k\geqslant 0$, the Fourier coefficients of $f^k$.
	
	Owing to the algebra property \eqref{eq:algebra}, for every $k\geqslant 0$ we have $f^k\in\ell^1(\T^d)$, with $\|f^k\|_{\ell^1}\leqslant \delta^k$. Assume $\delta\leqslant\frac{R}{2}$. In view of \eqref{small_l1} and \eqref{eq:emb}, the composition $h\circ f$ is well-defined. The Fourier coefficients $\gamma_n$ of $h\circ f$ are well-defined too, and given by
$$\gamma_n=\sum_{k=0}^{\infty}a_kc_n^{(k)}.$$  
Then we have
	$$\|h\circ f\|_{\ell^1}=\sum_{n\in\Z^d}|\gamma_n|\leqslant\sum_{n\in\Z^d}\sum_{k=0}^{\infty}|a_kc_n^{(k)}|= \sum_{k=0}^{\infty}|a_k|\sum_{n\in\Z^d}|c_n^{(k)}|\leqslant \sum_{k=0}^{\infty}|a_k|\delta^k,$$
	and the last series is finite, since $\delta<R$. In particular, $\|h \circ f\|_{\ell^1}\leqslant |h(0)|+C\delta$ for some constant $C>0$ depending only on $h$.
\end{proof}

Next, we state some useful product and composition estimates in Sobolev spaces.  We first recall the classical fractional Leibniz rule:
\begin{lemma}
	Let $d\in\N^+$ and $s\geq 0$. Then the estimate
	\begin{equation}\label{eq:bil_sym}
		\|fg\|_{H^s}\lesssim_{s,d} \|f\|_{L^{\infty}}\|g\|_{H^s}+\|f\|_{H^s}\|g\|_{L^{\infty}}
	\end{equation}
	holds true for every $f,g\in H^s(\T^d)\cap L^{\infty}(\T^d)$.
\end{lemma}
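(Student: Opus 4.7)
The plan is to argue via Bony's paraproduct decomposition on $\T^d$. With the standard dyadic Littlewood--Paley projectors $\Delta_j, S_j$, write
$$fg = T_f g + T_g f + R(f,g),$$
where $T_f g := \sum_{j\geq 2} S_{j-1} f \cdot \Delta_j g$ is the paraproduct of $f$ on $g$, and $R(f,g) := \sum_{|j-j'|\leq 1} \Delta_j f \cdot \Delta_{j'} g$ is the remainder. The two paraproducts will contribute the two terms on the right-hand side of \eqref{eq:bil_sym}, while $R(f,g)$ will be bounded by either of them.

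First I would handle $T_f g$. Since each summand $S_{j-1}f \cdot \Delta_j g$ has Fourier spectrum contained in an annulus of radius $\sim 2^j$, the Littlewood--Paley characterization of $H^s(\T^d)$ gives
$$\|T_f g\|_{H^s}^2 \sim \sum_j 2^{2js}\,\|S_{j-1} f \cdot \Delta_j g\|_{L^2}^2 \leq \|f\|_{L^\infty}^2 \sum_j 2^{2js}\|\Delta_j g\|_{L^2}^2 \lesssim \|f\|_{L^\infty}^2\,\|g\|_{H^s}^2,$$
using the uniform bound $\|S_{j-1} f\|_{L^\infty} \lesssim \|f\|_{L^\infty}$ (the kernel of $S_{j-1}$ is an $L^1$-normalized bump). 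Swapping the roles of $f$ and $g$ yields the symmetric estimate $\|T_g f\|_{H^s} \lesssim \|f\|_{H^s}\|g\|_{L^\infty}$.

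The delicate piece is the remainder $R(f,g)$, whose summands $\Delta_j f\cdot \widetilde{\Delta}_j g$ (with $\widetilde{\Delta}_j = \Delta_{j-1}+\Delta_j+\Delta_{j+1}$) have spectrum in a \emph{ball} of radius $\sim 2^j$ rather than in an annulus. A given dyadic block $\Delta_k R(f,g)$ then receives contributions from all $j\gtrsim k$, and by H\"older together with the uniform bound $\|\widetilde{\Delta}_j g\|_{L^\infty} \lesssim \|g\|_{L^\infty}$ one finds
$$\|\Delta_k R(f,g)\|_{L^2} \lesssim \|g\|_{L^\infty}\sum_{j\geq k-C} \|\Delta_j f\|_{L^2}.$$
For $s>0$, setting $a_j := 2^{js}\|\Delta_j f\|_{L^2}$ and multiplying by $2^{ks}$ reduces the $H^s$-norm of $R(f,g)$ to an $\ell^2$-bound for the operator with kernel $K(k,j) = 2^{(k-j)s}\mathbf{1}_{\{j\geq k-C\}}$, which by Schur's test is bounded since both $\sup_k \sum_j K(k,j)$ and $\sup_j \sum_k K(k,j)$ are convergent geometric series (this is the place where $s>0$ enters). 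This gives $\|R(f,g)\|_{H^s} \lesssim \|g\|_{L^\infty}\|f\|_{H^s}$, while the endpoint $s=0$ is just the trivial $L^2$-product inequality $\|fg\|_{L^2}\leq \|f\|_{L^\infty}\|g\|_{L^2}$.

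The main technical step is precisely this Schur-type bound for the remainder: the loss of spectral localization in $R(f,g)$ forces a non-diagonal $\ell^2$ summation, in contrast with the paraproducts where annular support makes the $H^s$-norm equivalent to an $\ell^2$ sum of dyadic $L^2$-norms. Everything else relies only on standard properties of $\Delta_j$ and $S_j$ on $\T^d$.
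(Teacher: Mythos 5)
Your proof is correct. The paper does not actually supply a proof of this lemma; it cites it as a well-known fractional Leibniz estimate, referring to Kato--Ponce and Grafakos--Oh for $\R^d$ and to M\'etivier, Ionescu--Kenig, Gubinelli--Koch--Oh and B\'enyi--Oh--Zhao for the periodic setting. Your Bony-paraproduct argument is a self-contained and entirely standard way to establish it, and the Schur-test treatment of the remainder $R(f,g)$ (where the summands only have ball-type rather than annular spectral localization, forcing a genuinely off-diagonal $\ell^2$ summation that converges precisely because $s>0$) is the right place to flag the key technical point. It is worth noting that for the companion \emph{asymmetric} estimate (Lemma~\ref{le:asbi}) the paper takes a different route: it writes $J^{s-1}(fg)$ as a sum of two bilinear multiplier operators, applies the Coifman--Meyer $L^\infty\times L^2\to L^2$ bound, and then transfers from $\R^d$ to $\T^d$ via a bilinear transference principle. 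That machinery would prove your symmetric statement as well; conversely, a paraproduct proof in the style of yours would also handle the asymmetric variant by putting $J^{-1}$ on the high-frequency factor in the paraproduct where $g$ is the low-frequency input. The two approaches buy essentially the same thing here; the Coifman--Meyer route is slicker once the transference principle is in hand, while the paraproduct route is more elementary and keeps everything on the torus throughout.

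One small presentational point: when you write $\|T_f g\|_{H^s}^2\sim\sum_j 2^{2js}\|S_{j-1}f\cdot\Delta_j g\|_{L^2}^2$, the two-sided equivalence $\sim$ is slightly stronger than what the annular-support lemma gives you (and than what you need); the one-sided $\lesssim$ is the correct statement, and it suffices. Also, on $\T^d$ the low-frequency block $S_0 f$ includes the mean of $f$, so the Littlewood--Paley square-function characterization of $H^s$ should be stated with the $\|S_0\cdot\|_{L^2}$ term included; this does not affect your argument since $\|S_0 f\|_{L^\infty}\lesssim\|f\|_{L^\infty}$ as well, but it is worth being explicit about to keep the periodic case tidy.
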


Estimate \eqref{eq:bil_sym}, and suitable generalizations, are well-known in $\R^d$ \cite{Kato-Ponce, GOH, MUSC}, and hold true also on the torus \cite{Metivier,IOKE,GHO} -- see the recent paper \cite{Tada} and references therein for a general result and a comprehensive discussion.

We also  need an \emph{asymmetric} version of \eqref{eq:bil_sym}.

\begin{lemma}\label{le:asbi}
	Let $d\in\N^+$ and $s\geqslant 1$. Then the estimate
	\begin{equation}\label{eq:asbi}
		\|f g\|_{H^{s-1}}\lesssim_{s,d} \|f\|_{L^{\infty}}\|g\|_{H^{s-1}}+\|f\|_{H^s}\|J^{-1}g\|_{L^{\infty}}
	\end{equation}
	holds true for every $f\in H^s(\T^d)\cap L^{\infty}(\T^d)$ and $g\in H^{s-1}(\T^d)$ such that $J^{-1}g\in L^{\infty}(\T^d)$.
\end{lemma}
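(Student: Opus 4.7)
My plan is to proceed via a Bony-type paraproduct decomposition on the torus. Using a dyadic Littlewood-Paley partition of unity on $\Z^d$, I would write $fg=T_f g+T_g f+R(f,g)$, where $T_f g:=\sum_j S_{j-N_0}f\cdot\Delta_j g$ is the low-high paraproduct (for some fixed offset $N_0$), $T_g f$ is its symmetric counterpart, and $R(f,g):=\sum_{|j-k|\le N_0}\Delta_j f\cdot\Delta_k g$ collects the high-high interactions. The three pieces will be treated separately: $T_f g$ will produce the contribution $\|f\|_{L^\infty}\|g\|_{H^{s-1}}$, while $T_g f$ and $R(f,g)$ together will yield $\|f\|_{H^s}\|J^{-1}g\|_{L^\infty}$.

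The estimate $\|T_f g\|_{H^{s-1}}\lesssim\|f\|_{L^\infty}\|g\|_{H^{s-1}}$ is the standard low-high paraproduct bound. The key new ingredient for the other two pieces is the Bernstein-type inequality $\|S_{j}h\|_{L^\infty}\lesssim 2^j\|J^{-1}h\|_{L^\infty}$, valid on $\T^d$ for any $j$ and any $h$ with $J^{-1}h\in L^\infty$; this follows by writing $S_j h=S_j(J\cdot J^{-1}h)$ and noting that the multiplier $\chi_{|n|\lesssim 2^j}\langle n\rangle$ has convolution kernel of $L^1$-norm $O(2^j)$. This lets one \emph{trade} the missing $L^\infty$-bound on $g$ for one derivative, which is then absorbed by upgrading $f$ from $H^{s-1}$ to $H^s$. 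Applied to $T_g f$, it gives $\|T_g f\|_{H^{s-1}}^2\lesssim\|J^{-1}g\|_{L^\infty}^2\sum_j 2^{2sj}\|\Delta_j f\|_{L^2}^2\lesssim\|J^{-1}g\|_{L^\infty}^2\|f\|_{H^s}^2$, after using that the Fourier support of $S_{j-N_0}g\cdot \Delta_j f$ lies in a dyadic shell of scale $2^j$.

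The hard part is the estimate for the diagonal remainder $R(f,g)$, because the product $\Delta_j f\cdot\Delta_k g$ with $|j-k|\le N_0$ is only spectrally localized in $|n|\lesssim 2^j$, rather than in a thin dyadic annulus; consequently, its $\Delta_\ell$-component picks up contributions from every $j\ge \ell-O(1)$, so one has to control a lower-triangular sum rather than a diagonal one. Combining the same Bernstein trick with a Schur/Young-type summation yields $\|\Delta_\ell R(f,g)\|_{L^2}\lesssim\|J^{-1}g\|_{L^\infty}\sum_{j\ge \ell-N_0}2^j\|\Delta_j f\|_{L^2}$, and the weighted $\ell^2$-summation in $\ell$ collapses to $\|f\|_{H^s}\|J^{-1}g\|_{L^\infty}$ thanks to the geometric decay $2^{(s-1)(\ell-j)}$ available whenever $s>1$. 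The endpoint $s=1$ is trivial, since the left-hand side of \eqref{eq:asbi} reduces to $\|fg\|_{L^2}$, already dominated by $\|f\|_{L^\infty}\|g\|_{L^2}$ via Hölder.
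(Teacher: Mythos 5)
Your proposal is correct, and it takes a genuinely different route from the paper. The paper proves \eqref{eq:asbi} by splitting the symbol $\langle\xi+\eta\rangle^{s-1}$ with a smooth partition in the ratio $|\xi|/|\eta|$, rewriting $J^{s-1}(fg)$ as $I_{\lambda_1}(f,J^{s-1}g)+I_{\lambda_2}(J^sf,J^{-1}g)$ for two bilinear symbols $\lambda_1,\lambda_2$, and then invoking a Coifman--Meyer bound on $\T^d$ obtained via the bilinear transference principle; since the symbol estimates are only checked for $s\in\N^+$, the paper closes the argument by interpolation in $s$. Your approach instead uses a Bony paraproduct decomposition $fg=T_fg+T_gf+R(f,g)$ and the elementary Bernstein-type bound $\|\Delta_k h\|_{L^\infty}\lesssim 2^k\|J^{-1}h\|_{L^\infty}$ to trade the missing $L^\infty$ control on $g$ for one derivative, which is absorbed by upgrading $f$ from $H^{s-1}$ to $H^s$; the off-diagonal lower-triangular sum in the remainder $R(f,g)$ is handled by Schur/Young using the geometric decay $2^{(s-1)(\ell-j)}$, which genuinely needs $s>1$ (as you correctly isolate, $s=1$ being trivial by H\"older). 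The trade-offs: the paper's proof is shorter once Lemma \ref{eq:cm} is granted, and it is the argument of Gulisashvili--Kon transplanted to the torus; your proof is more self-contained (no transference, no Coifman--Meyer black box), works uniformly for all real $s>1$ without an interpolation step, and makes the derivative-trading mechanism transparent. The paper is in fact aware of this alternative: the remark preceding the proof notes that \eqref{eq:asbi} ``could be deduced by paraproduct calculus'' and cites M\'etivier's notes, opting for the Coifman--Meyer route instead. One small point worth making explicit if you write this up: on $\T^d$ the kernel bound $\|S_jJ\|_{L^\infty\to L^\infty}\lesssim 2^j$ follows from the Euclidean rescaling argument plus Poisson summation, and the very low frequencies $2^j\lesssim 1$ (where only the zero mode survives) are trivially consistent since $\langle 0\rangle=1$.
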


Estimate \eqref{le:asbi} could be deduced by paraproduct calculus, see e.g.~\cite{Metivier}. Here we provide instead a direct proof, following the same lines as in \cite[Theorem 1.4]{GK}, where the analogous result (and suitable generalizations) is established on $\R^d$. We preliminary recall a continuity bound for bilinear multiplier operators on $\T^d$. 

\begin{lemma}\label{eq:cm}
	Fix $d\in\N^+$. Let $\lambda\in\mathcal{C}^{\infty}(\R^d\times\R^d\setminus(0,0))$ be a real symbol such that
	\begin{equation}\label{ass:cf}
		\big|\partial^{\alpha}_{\xi}\partial^{\beta}_{\eta}\lambda(\xi,\eta)\big|\lesssim_{\alpha,\beta}(|\xi|+|\eta|)^{-|\alpha|-|\beta|},
	\end{equation}
	for any $\alpha,\beta\in \Z^d_{+}$ and $(\xi,\eta)\neq(0,0)$. If $I_{\lambda}$ denotes the bilinear operator \begin{equation}\label{eq:bilin}
		I_{\lambda}(f,g)(x):=\sum_{\ell_1,\ell_2\in\Z^n}e^{i(\ell_1+\ell_2)\cdot x}\lambda(\ell_1,\ell_2)\widehat{f}(\ell_1)\widehat{g}(\ell_2),
	\end{equation}
	then the estimate
	\begin{equation}\label{eq:bil-est}
		\|I_{\lambda}(f,g)\|_{L^2}\lesssim\|f\|_{L^{\infty}}\|g\|_{L^2}
	\end{equation}
	holds true for every $f\in L^{\infty}(\T^d)$ and $g\in L^{2}(\T^d)$.
\end{lemma}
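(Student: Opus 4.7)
The estimate \eqref{eq:bil-est} is the toroidal, asymmetric (endpoint $L^\infty\times L^2 \to L^2$) analogue of the classical Coifman--Meyer bilinear multiplier theorem under the Mihlin-type condition \eqref{ass:cf}. I would deploy the standard dyadic decomposition of the symbol combined with a Fourier-series ``separation of variables'' trick that reduces each dyadic piece $I_{\lambda_j}$ to a rapidly convergent sum of pointwise products of Littlewood--Paley projections, for which H\"older is immediate; summation in $j$ is then handled by Plancherel after a paraproduct-type splitting of $\lambda$. An attractive alternative is to invoke de Leeuw's transference principle and deduce the statement directly from its Euclidean counterpart (i.e.~the asymmetric bilinear multiplier theorem on $\R^d$, which is the viewpoint of [GK]); below I outline the direct route.

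Concretely, I would fix a smooth partition of unity $1=\sum_{j\ge 0}\Phi_j(\xi,\eta)$ on $\R^{2d}\setminus\{0\}$ with $\mathrm{supp}\,\Phi_j\subset\{2^{j-1}\le |\xi|+|\eta|\le 2^{j+1}\}$ and $|\partial^\alpha_\xi\partial^\beta_\eta \Phi_j|\lesssim 2^{-j(|\alpha|+|\beta|)}$, and set $\lambda_j:=\Phi_j\lambda$; by \eqref{ass:cf} each $\lambda_j$ inherits these scale-invariant derivative bounds and is supported in a box of side $\sim 2^j$. Expanding $\lambda_j$ in Fourier series on $Q_j:=[-\pi\,2^{j+2},\pi\,2^{j+2}]^{2d}$,
\[
\lambda_j(\xi,\eta)=\sum_{(m,n)\in\Z^{2d}}c^{(j)}_{m,n}\,e^{i\,2^{-(j+2)}(m\cdot\xi+n\cdot\eta)},
\]
repeated integration by parts gives $|c^{(j)}_{m,n}|\lesssim_N (1+|m|+|n|)^{-N}$ for every $N\ge 0$, uniformly in $j$.

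Next I would pick one-variable $C^\infty_c$ cutoffs $\phi_j^{(1)},\phi_j^{(2)}$ equal to $1$ on the respective projections of $\mathrm{supp}\,\Phi_j$. Substituting the Fourier expansion into \eqref{eq:bilin} and factoring the exponentials rewrites
\[
I_{\lambda_j}(f,g)(x)=\sum_{m,n}c^{(j)}_{m,n}\,\bigl(P_j^{(1)}\tau^{(j)}_m f\bigr)(x)\cdot\bigl(P_j^{(2)}\tau^{(j)}_n g\bigr)(x),
\]
where $P_j^{(i)}$ are Fourier multipliers at scale $2^j$ with symbols $\phi_j^{(i)}$, and $\tau^{(j)}_m,\tau^{(j)}_n$ are spatial translations by the real vectors $-2^{-(j+2)}m,\,-2^{-(j+2)}n$. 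Standard Young/Bernstein bounds give $\|P_j^{(1)}\tau^{(j)}_m f\|_{L^\infty}\lesssim \|f\|_{L^\infty}$ uniformly in $m,j$, while $L^2$ translation invariance yields $\|P_j^{(2)}\tau^{(j)}_n g\|_{L^2}=\|P_j^{(2)}g\|_{L^2}$. Combining these with the rapid decay of $c^{(j)}_{m,n}$ and H\"older produces the scale-by-scale bound
\[
\|I_{\lambda_j}(f,g)\|_{L^2}\lesssim \|f\|_{L^\infty}\,\|P_j^{(2)}g\|_{L^2}.
\]

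The main obstacle is the summation in $j$: a naive triangle inequality would only give $\sum_j\|P_j^{(2)}g\|_{L^2}$, which is not controlled by $\|g\|_{L^2}$. To recover square-summability I would split $\lambda$ in advance into three paraproduct-type regions $|\xi|\ll|\eta|$, $|\eta|\ll|\xi|$, $|\xi|\sim|\eta|$. In the first two regions the output frequency $\xi+\eta$ is concentrated in a dyadic annulus at scale $2^j$, producing almost-orthogonality of the pieces $I_{\lambda_j}(f,g)$ and hence
\[
\|I_\lambda(f,g)\|_{L^2}^2\lesssim \sum_j \|I_{\lambda_j}(f,g)\|_{L^2}^2\lesssim \|f\|_{L^\infty}^2\sum_j \|P_j^{(2)}g\|_{L^2}^2\lesssim \|f\|_{L^\infty}^2\|g\|_{L^2}^2
\]
by the Littlewood--Paley square function theorem. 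The ``high-high'' region is more delicate because the output can sit at any low frequency; here I would keep $f$ in $L^\infty$ throughout and control the resulting diagonal sum in $j$ by Cauchy--Schwarz against the $\ell^2$-valued square function $(\sum_j |P_j^{(2)} g|^2)^{1/2}$, applying Littlewood--Paley one final time to close the estimate.
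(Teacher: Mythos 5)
You take a genuinely different route from the paper: the paper disposes of this lemma in two lines by quoting the Euclidean Coifman--Meyer theorem \cite{Co-ma} and transferring it to $\T^d$ via the bilinear transference principle of \cite{fansato} -- precisely the ``attractive alternative'' you mention in passing and then set aside. The direct argument you sketch (dyadic decomposition of the symbol, Fourier-series separation of variables on the dyadic boxes, reduction to translated Littlewood--Paley pieces with rapidly decaying coefficients) is a standard and largely correct skeleton for a self-contained proof, and your scale-by-scale bound $\|I_{\lambda_j}(f,g)\|_{L^2}\lesssim\|f\|_{L^\infty}\|P_j^{(2)}g\|_{L^2}$ is fine. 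The gap is in the summation over $j$.

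The chain
\[
\|I_\lambda(f,g)\|_{L^2}^2\lesssim \sum_j \|I_{\lambda_j}(f,g)\|_{L^2}^2\lesssim \|f\|_{L^\infty}^2\sum_j \|P_j^{(2)}g\|_{L^2}^2\lesssim \|f\|_{L^\infty}^2\|g\|_{L^2}^2
\]
cannot cover both of your ``first two regions.'' In the region $|\xi|\ll|\eta|$ the multiplier $P_j^{(2)}$ is annular and the last inequality is indeed Littlewood--Paley. But in the region $|\eta|\ll|\xi|$ the $\eta$-projection of $\operatorname{supp}\Phi_j$ is a full ball of radius $\sim 2^j$, so $P_j^{(2)}$ is a \emph{low-pass} cutoff and $\sum_j\|P_j^{(2)}g\|_{L^2}^2$ diverges for every nonzero $g\in L^2$; reversing the H\"older placement (so that $f$ carries the annular projection) produces $\sum_j\|\Delta_j f\|_{L^\infty}^2$, which is not controlled by $\|f\|_{L^\infty}^2$. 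The high-high region has the same obstruction in a different guise: the pointwise Cauchy--Schwarz in $j$ you invoke leaves you with the Littlewood--Paley square function of $f$ in $L^\infty$, and the square function maps $L^\infty$ into $BMO$, not back into $L^\infty$. To close both the high-low and high-high regions one needs the Carleson-measure characterization of $BMO$ (equivalently $H^1$--$BMO$ duality): since $f\in L^\infty\subset BMO$, the measure $\sum_j|\Delta_j f(x)|^2\,dx\,\delta_{t=2^{-j}}$ is Carleson with norm $\lesssim\|f\|_{L^\infty}^2$, and the Carleson embedding theorem against the non-tangential maximal function of $g$ then yields $\sum_j\int|\Delta_j f|^2|S_j g|^2\lesssim\|f\|_{L^\infty}^2\|g\|_{L^2}^2$. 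This is exactly the ingredient missing from your sketch, and it is what makes the $L^\infty\times L^2\to L^2$ endpoint of Coifman--Meyer genuinely harder than the interior Lebesgue exponents; it is also why the paper prefers to quote and transfer rather than reprove.
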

The analogous statement of Lemma \ref{eq:cm} on $\R^d$ is provided by the Coifman-Meyer estimates \cite{Co-ma}. The result on $\T^d$ can be then deduced by the bilinear transference principle \cite[Theorem 3]{fansato}.

\begin{proof}[Proof of Lemma \ref{le:asbi}]
	Let $\chi:\R\to\R$ be a smooth, decreasing, odd function, with $|\chi(r)|=\frac12$ for $|r|\geqslant 1$. Consider now the function $\varphi:\R_{\geqslant 0}\to \R$ defined by $\varphi(0)=1$ and $\varphi(x)=\frac{1}{2}+\chi(\ln(x))$ for $x>0$. Observe that $\varphi$ is smooth, non-negative, non-increasing, and supported on $[0,e]$. Moreover, $\varphi(x)+\varphi(x^{-1})=1$ for every $x>0$, whence in particular $\varphi(x)=1$ for $x\in[0,\frac{1}{e}]$. Next, we can write
	
	\begin{equation}\label{eq:split-symbol}
		\begin{split}
			J^{s-1}(fg)(x)&=\sum_{\ell_1,\ell_2\in\Z^2}e^{i(\ell_1+\ell_2)\cdot x}\langle \ell_1+\ell_2\rangle^{s-1}\widehat{f}(\ell_1)\widehat{g}(\ell_2)\\
			&=\sum_{\ell_1,\ell_2\in\Z^2}e^{i(\ell_1+\ell_2)\cdot x}\varphi\Big(\frac{|\ell_1|}{|\ell_2|}\Big)\langle \ell_1+\ell_2\rangle^{s-1}\langle \ell_2\rangle^{-s+1}\widehat{f}(\ell_1)\widehat{J^{s-1}g}(\ell_2)\\
			&+\sum_{\ell_1,\ell_2\in\Z^2}e^{i(\ell_1+\ell_2)\cdot x}\varphi\Big(\frac{|\ell_2|}{|\ell_1|}\Big)\langle \ell_1+\ell_2\rangle^{s-1}\langle \ell_1\rangle^{-s}\langle\ell_2\rangle\widehat{J^sf}(\ell_1)\widehat{J^{-1}g}(\ell_2)\\
			&=I_{\lambda_1}(f,J^{s-1}g)(x)+I_{\lambda_2}(J^sf,J^{-1}g),
		\end{split}
	\end{equation}	
	where we set, for $(\xi,\eta)\in\R^d\times\R^d\setminus(0,0)$,
	\begin{equation}
		\begin{split}
			\lambda_1(\xi,\eta):=\varphi\Big(\frac{|\xi|}{|\eta|}\Big)\langle \xi+\eta\rangle^{s-1}\langle \eta\rangle^{-s+1},\\
			\lambda_2(\xi,\eta):=\varphi\Big(\frac{|\eta|}{|\xi|}\Big)\langle \xi+\eta\rangle^{s-1}\langle \xi\rangle^{-s}\langle\eta\rangle.
		\end{split}
	\end{equation}
	Since $\varphi$ and $\varphi'$ are supported, respectively, on $[0,e]$ and $[\frac1e,e]$, 
	a direct computation shows that, for $s\in\N^+$, the symbols $\lambda_1, \lambda_2$ belongs to $\mathcal{C}^{\infty}(\R^d\times\R^d\setminus(0,0))$, and satisfy assumption \eqref{ass:cf}. Using \eqref{eq:split-symbol} and Lemma \ref{eq:cm}, we then obtain
	\begin{equation}
		\begin{split}
			\|f g\|_{H^{s-1}}&=\|J^{s-1}(f g)\|_{L^2}\leqslant\|I_{\lambda_1}(f,J^{s-1}g)\|_{L^2}+\|I_{\lambda_2}(J^sf,J^{-1}g)\|_{L^2}\\
			&\lesssim\|f\|_{L^{\infty}}\|J^{s-1}g\|_{L^{2}}+\|J^sf\|_{L^2}\|J^{-1}g\|_{L^{\infty}}=\|f\|_{L^{\infty}}\|g\|_{H^{s-1}}+\|f\|_{H^s}\|J^{-1}g\|_{L^{\infty}},
		\end{split}
	\end{equation}
	when $s\in\N^+$. The general case is obtained by interpolation, arguing as in the proof of Theorem 1.4 in \cite{GK}.
\end{proof}
Finally, we state a composition lemma in Sobolev spaces.
\begin{lemma}\label{le:compo}
	Let $d\in\N^+$ and $s>0$. Let $I$ be an open interval of $\R$, and $f\in H^s(\T^d)\cap L^{\infty}(\T^d)$, with $f(\T^d)\subseteq I$. Fix moreover  $h\in\mathcal{C}^{\infty}(I)$, such that either $0\not\in I$ or $0\in I$, $h(0)=0$. Then
	\begin{equation}\label{eq:compo}
		\|h\circ f\|_{H^s}\lesssim_{d,s,h,\|f\|_{L^{\infty}}} \|f\|_{H^s}.
	\end{equation}
\end{lemma}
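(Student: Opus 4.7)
The approach proceeds in three phases: reduction to the case $0\in I$ and $h(0)=0$; the integer-$s$ case via Fa\`a di Bruno and Gagliardo--Nirenberg interpolation; the non-integer case via a Littlewood--Paley telescoping \`a la Meyer. Constants are allowed to depend on $h$ and on a compact set $K\Subset I$ containing the range of $f$ (so in particular on $\|f\|_{L^\infty}$).

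For the reduction, suppose $0\notin I$. Then $K$ is bounded away from $0$, hence $\|f\|_{H^s}\geq\|f\|_{L^2}\geq c(K)\,|\T^d|^{1/2}>0$. Fixing any $a\in K$ and setting $\tilde f:=f-a$, $\tilde h(y):=h(y+a)-h(a)$, one has $\tilde h(0)=0$ and $h\circ f=\tilde h\circ\tilde f+h(a)$. The constant contribution $|h(a)|\,|\T^d|^{1/2}$ to $\|h\circ f\|_{H^s}$ is absorbed into the bound thanks to the lower bound on $\|f\|_{H^s}$, while $\|\tilde f\|_{H^s}$ and $\|\tilde f\|_{L^\infty}$ differ from their untilded counterparts only by harmless constants. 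Henceforth I assume $0\in I$ and $h(0)=0$.

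For integer $s=k\in\N$, Fa\`a di Bruno expands, for each multi-index $|\alpha|=k$,
\[
\partial^\alpha(h\circ f)=\sum_{\pi}c_\pi\,h^{(|\pi|)}(f)\prod_{B\in\pi}\partial^{\alpha_B}f,
\]
over partitions $\pi$ of $\alpha$ with $|\alpha_B|\geq 1$ and $\sum_B|\alpha_B|=k$. Each prefactor satisfies $\|h^{(j)}(f)\|_{L^\infty}\leq C(h,K)$. For the product of derivatives, Gagliardo--Nirenberg gives
\[
\|\partial^{\alpha_B}f\|_{L^{2k/|\alpha_B|}}\lesssim\|f\|_{L^\infty}^{1-|\alpha_B|/k}\,\|f\|_{H^k}^{|\alpha_B|/k},
\]
and a H\"older estimate (using $\sum_B|\alpha_B|/(2k)=1/2$) yields $\bigl\|\prod_B\partial^{\alpha_B}f\bigr\|_{L^2}\lesssim C(\|f\|_{L^\infty})\,\|f\|_{H^k}$. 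The $L^2$-piece of $\|h\circ f\|_{H^k}$ is handled via $|h(y)|\leq\|h'\|_{L^\infty(K)}|y|$.

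For non-integer $s>0$, the plan is to use Meyer's Littlewood--Paley telescoping: with $S_j$ the dyadic low-frequency truncations and $\Delta_j:=S_{j+1}-S_j$,
\[
h\circ f=\sum_{j\geq 0}(\Delta_j f)\,m_j,\qquad m_j:=\int_0^1 h'\bigl(S_jf+t\Delta_jf\bigr)\,dt,
\]
with $\sup_j\|m_j\|_{L^\infty}\leq\|h'\|_{L^\infty(K)}$. Computing each $\Delta_k(h\circ f)$ and exploiting the spectral localization of the $\Delta_j f$ together with Bernstein's inequality then yields $\sum_k 2^{2ks}\|\Delta_k(h\circ f)\|_{L^2}^2\lesssim \|f\|_{H^s}^2$. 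The main obstacle is precisely this non-integer regime: after differentiating, Fa\`a di Bruno produces products of single-derivative factors that are not in $L^\infty$, obstructing a direct application of the symmetric fractional Leibniz rule \eqref{eq:bil_sym}; Meyer's telescoping circumvents this by keeping uniformly bounded multipliers $m_j$ in front of each Littlewood--Paley block of $f$.
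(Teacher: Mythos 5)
Your route is genuinely different from the paper's, which is just two lines: for $0\in I$, $h(0)=0$, the authors quote the classical composition theorem in Besov/Sobolev spaces (Bahouri--Chemin--Danchin, Theorem 2.87), and for $0\notin I$ they extend $h$ to a smooth function on all of $\R$ vanishing at $0$ (multiply by a cutoff equal to $1$ on a compact neighborhood of $f(\T^d)$ inside $I$ and supported away from $0$), whereupon the first case applies. Your reduction instead translates $f$ by a constant $a\in K$ and adjusts $h$, which is correct but slightly heavier: you then need the lower bound $\|f\|_{H^s}\geq c(K)|\T^d|^{1/2}$ (available because $0\notin I$ keeps $|f|$ bounded below) to absorb the constant $h(a)$, whereas the paper's extension-by-cutoff requires no such bookkeeping. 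You attempt to reprove the classical theorem from scratch; the integer-$s$ Fa\`a di Bruno/Gagliardo--Nirenberg/H\"older argument is the standard Moser-type estimate and is fine modulo the usual torus caveat that $\partial^{\alpha_B}f=\partial^{\alpha_B}(f-\bar f)$, so the interpolation should be applied to $f-\bar f$.

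The gap is in the non-integer case. First, a small one: the telescoping identity should be $h\circ f=h(S_0f)+\sum_{j\geq 0}m_j\,\Delta_jf$; the constant $h(S_0f)$ is easily controlled by $\|f\|_{L^2}$ via $h(0)=0$, but it must appear. More seriously, the assertion that ``exploiting the spectral localization of the $\Delta_jf$ together with Bernstein's inequality then yields $\sum_k 2^{2ks}\|\Delta_k(h\circ f)\|_{L^2}^2\lesssim\|f\|_{H^s}^2$'' is not a routine computation: the multipliers $m_j$ are \emph{not} spectrally localized, so the summands $m_j\,\Delta_jf$ spread over all frequency annuli, and you have no control on $\|\Delta_km_j\|_{L^\infty}$ for $k\gg j$. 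Handling this requires either Bony's paraproduct decomposition $h\circ f = T_{h'(f)}f + R$ with a genuine remainder estimate, or a careful almost-orthogonality argument that uses the regularity of $m_j$ gained from the low-pass truncation $S_jf$ together with $\|\nabla S_jf\|_{L^\infty}\lesssim 2^j\|f\|_{L^\infty}$. Moreover, the direct Meyer telescoping as you wrote it only works for $0<s<1$; for non-integer $s>1$ one must first differentiate and reduce the order, or iterate the paraproduct argument. These are precisely the nontrivial parts of the classical theorem the paper is citing, so your sketch is pointing in the right direction but the crucial step is asserted rather than proved.
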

\begin{proof}
	If $0\in I$, $h(0)=0$, \eqref{eq:compo} is a classical composition estimate, see e.g.~\cite[Theorem 2.87]{Danchin}. If $0\not\in I$, we can extend $h$ to a smooth function on $\R$, with $h(0)=0$, and the previous case applies.
\end{proof}


\end{document}